\documentclass[12pt,reqno]{amsart}
\usepackage{amsmath}
\usepackage{amssymb}
\usepackage{amstext}
\usepackage{mathrsfs}
\usepackage{a4wide}
\usepackage{graphicx}
\usepackage{bbm}
\usepackage{verbatim}
\usepackage{bm}
\usepackage{graphicx}
\usepackage{tikz}
\usepackage{pgfplots}
\usepackage{titletoc}
\usepackage[T1]{fontenc}

\allowdisplaybreaks \numberwithin{equation}{section}
\usepackage{color}
\usepackage{cases}
\usepackage{hyperref}

\numberwithin{equation}{section}

\newtheorem{theorem}{Theorem}[section]
\newtheorem{proposition}[theorem]{Proposition}
\newtheorem{corollary}[theorem]{Corollary}
\newtheorem{lemma}[theorem]{Lemma}

\theoremstyle{definition}

\theoremstyle{remark}
\newtheorem{remark}[theorem]{Remark}

\begin{document}
		
	\title[Helical stationary states of Euler]{Piecewise smooth stationary  Euler flows with support in a neighborhood of a helix} 
	
	\author{Daniel Peralta-Salas, Jie Wan}

	\address{Instituto de Ciencias Matem\'aticas, 
		Consejo Superior de Investigaciones Cient\'{\i}ficas, 
		Madrid, Spain}
	\email{dperalta@icmat.es}

	\address{School of Mathematics and Statistics, Beijing Institute of Technology, Beijing,  P.R. China}
	\email{wanjie@bit.edu.cn}

\maketitle

\begin{abstract}
We construct stationary solutions of the three-dimensional incompressible Euler equations with helical symmetry and support in a neighborhood of a helix. The solutions are piecewise smooth and arise from a nonlinear overdetermined elliptic boundary value problem associated with a stream-function formulation. A distinguishing feature is that the vortex cross-sections are intrinsically anisotropic: after rescaling, the leading-order shape is elliptic rather than radial, and the boundary exhibits a nontrivial third Fourier mode reflecting helical effects absent in previous axisymmetric constructions. A key step in the proof is the analysis of a genuinely anisotropic overdetermined elliptic problem with prescribed Dirichlet and nonconstant Neumann conditions.  
	
\end{abstract}
\maketitle \small{\bf Keywords:} Incompressible Euler equations; Helical symmetry; Compact velocity; Overdetermined problem.   \\		 

\tableofcontents

\section{Introduction}

The problem of constructing stationary solutions to the three-dimensional incompressible Euler
equations 
\begin{equation}\label{eq:euler}
u\cdot \nabla u+\nabla p=0,\qquad \nabla\cdot u=0
\end{equation}
with compact support has received much attention in modern fluid dynamics.  In two  dimensions, compactly supported stationary solutions can be produced explicitly through stream function representations.

However, in three dimensions the situation is dramatically different: a combination of vorticity stretching, nonlocal pressure coupling, and geometric constraints lead to strong obstructions for  steady flows with compact support. On the one hand, classical rigidity results exclude large classes of smooth compactly supported configurations, including Beltrami fields and several symmetric ansatzs. Nadirashvili \cite{Nadirashvili} and Chae–Constantin \cite{ChaeConstantin} have shown that no compactly supported steady solutions of 3D Euler of Beltrami type exist, and that in fact there are not even any Beltrami fields with finite energy. Axisymmetric stationary Euler flows of compact support without swirl do not exist \cite{JiuXin} either. Note that the existence of $ C^{1, \alpha}  $ axisymmetric stationary Euler flows whose vorticity is compactly supported was proved in~\cite{FraenkelBerger}. On the other hand, convex integration produces highly irregular compactly supported $C^\alpha$ weak solutions to the stationary Euler equations~\cite{EPP25}  (see also~\cite{ChoffrutSzekelyhidi} for the construction of $ L^\infty $ solutions).

A central development in the smooth regime was the construction by Gavrilov \cite{Gavrilov} of compactly supported smooth stationary Euler flows. 
These solutions, which are actually axisymmetric with swirl, demonstrate that compact support is possible under an axisymmetric geometric framework. Subsequently, Constantin, La, and Vicol \cite{ConstantinLaVicol} reformulated and extended such a construction using the Grad–Shafranov equation, emphasizing the role of $ localizability $, namely the condition
\begin{equation*} 
u\cdot\nabla p=0,
\end{equation*}
which means that the pressure remains
constant along streamlines, and holds in those examples. Gavrilov also constructed \cite{GavrilovHelical} stationary Euler fields which are localizable and that are supported in a neighborhood of a helix.

All the aforementioned examples concern smooth (compactly supported) steady Euler flows that are localizable. A striking development was made by Dom\'inguez-V\'azquez, Enciso, and Peralta-Salas \cite{DominguezVazquezEncisoPeraltaSalas2021}, who established the existence of a compactly supported Euler flow that is non-localizable. Their construction is based on piecewise smooth axisymmetric stationary Euler flows supported on toroidal domains and relies on solving an overdetermined boundary value problem arising from the Grad--Shafranov formulation of the axisymmetric Euler equations, in which both Dirichlet data and nonconstant Neumann data are prescribed. The resulting velocity field is compactly supported in a thin tubular neighborhood of a circle, with cross-sections that are approximately disk-shaped. 
Beyond providing a concrete example, this construction reveals that compact support and non-localizability are compatible within the axisymmetric setting. This naturally raises the question of whether non-localizability is merely a consequence of axisymmetric geometry or instead reflects a more robust phenomenon in fluid dynamics. In particular, does there exist a piecewise smooth Euler flow that is non-localizable and admits a genuinely helical, rather than axisymmetric, geometric structure?

Our objective in this paper is to construct a class of stationary weak solutions that exhibit a     mechanism of non-localization, 
  based on helical geometry. The solutions are concentrated in a tubular neighborhood of a smooth
helical curve in $ \mathbb{R}^3 $ and are compactly supported in the two transverse directions. 
The solutions extend periodically along a helical direction
in space, while the transverse profile is governed by an elliptic balance law. Our solutions are   different from those obtained by Gavrilov \cite{GavrilovHelical}: they
are piecewise smooth stationary weak solutions with helical symmetry, and they are
non-localizable.  They  are also  different from those obtained by \cite{DominguezVazquezEncisoPeraltaSalas2021}: the support of the velocity field is  a small tubular neighborhood of a helix, and its cross-section is
approximately an ellipse rather than a disk.  Each stationary solution we construct is bounded  and supported on a helical symmetric  domain with a smooth boundary, and the flow is smooth in the interior of this domain (up to the boundary) and possibly discontinuous
across the boundary. 

A related fundamental problem is the desingularization of helical vortex filaments, namely, whether one can construct three-dimensional incompressible Euler flows whose vorticity concentrates near helices and whose leading-order dynamics are described by the vortex filament equations. In the time-dependent setting, \cite{DonatiLacaveMiot} established such a desingularization result over a suitable time interval. For steady flows, \cite{DDMW2022} first constructed smooth helically symmetric vortices, while \cite{CaoWan2023} and \cite{CaoWan2024} further obtained helically symmetric $C^k$ vortices with compactly supported cross-sections and vortex patch solutions in helical domains with bounded cross-section, respectively. Related constructions for other models, including the Ginzburg–Landau and Gross–Pitaevskii equations, were obtained in \cite{DDMR1} and \cite{DDMR2}. The solutions constructed here differ from these previous works in two essential ways. First, both the vorticity and velocity fields have compactly supported cross-sections, whereas previous Euler constructions only impose such localization on the vorticity. Second, our solutions possess a nontrivial swirl component, which substantially changes the structure of the vortex and introduces additional analytical challenges.
 
Our construction is based on proving the existence of nontrivial solutions to an elliptic boundary value problem with both Dirichlet and Neumann data, which is commonly called overdetermined. To see how overdetermined boundary problems appear in this context, let us start by introducing the Grad--Shafranov formulation of the helically symmetric Euler equation in three dimensions. Let $ h>0 $. As demonstrated in Section 2, a helically symmetric solution to the Euler equation (with pitch $ h $) in Cartesian coordinates $(x_1,x_2,x_3)$ (in terms of the canonical orthonormal basis $\{\mathbf{e}_1, \mathbf{e}_2, \mathbf{e}_3\}$) is
\begin{equation}\label{2-5-1}
\begin{split}
u=&\frac{1}{|\vec{\xi}|^2}\left(-x_1x_2\partial_{x_1}\psi+(h^2+x_1^2)\partial_{x_2}\psi+F x_2 \right)\mathbf{e}_1+ \frac{1}{|\vec{\xi}|^2}\left(-(h^2+x_2^2)\partial_{x_1}\psi+x_1x_2\partial_{x_2}\psi-F x_1 \right)\mathbf{e}_2\\
&+\frac{1}{|\vec{\xi}|^2}\left(-hx_1 \partial_{x_1}\psi-hx_2\partial_{x_2}\psi+F h \right)\mathbf{e}_3,
\end{split} 
\end{equation}
where $ |\vec{\xi}|^2=x_1^2+x_2^2+h^2 $, and the function $\psi(x_1,x_2)$ satisfies the equation
\begin{equation}\label{Grad Shaf eqs} 
\nabla\cdot(K(x_1,x_2)\nabla\psi)=-\frac{2h^2F(\psi)}{|\vec{\xi}|^4}-\frac{F'(\psi)F(\psi)}{|\vec{\xi}|^2}+H'(\psi) 
\end{equation}
for some functions $F$ and $H$. The coefficient matrix is explicitly defined as 
	\begin{equation*} 
K(x_1,x_2): =\frac{1}{h^2+x_1^2+x_2^2}
\begin{pmatrix}
h^2+x_2^2 & -x_1x_2 \\
-x_1x_2 &  h^2+x_1^2
\end{pmatrix}.
\end{equation*}
The pressure is then given by
\begin{equation}\label{eq:pressure} 
p = H(\psi)-\frac1{2|\vec{\xi}|^2}
\left[(h^2+x_2^2)(\partial_{x_1}\psi)^2+(h^2+x_1^2)(\partial_{x_2}\psi)^2
-2x_1x_2\partial_{x_1}\psi\,\partial_{x_2}\psi+F(\psi)^2\right].
\end{equation}
The functions $F$ and $H$ can be picked freely.

For $ \theta\in\mathbb{R} $,   the rotation by an angle $\theta$ around the $x_3$-axis is defined as $ Q_\theta:=\begin{pmatrix}
R_\theta & 0  \\
0 & 1
\end{pmatrix} $ with $ R_\theta:=\begin{pmatrix}
\cos\theta & \sin\theta  \\
-\sin\theta &\cos\theta
\end{pmatrix}. $
The first observation, which is similar to the axisymmetric case in \cite{DominguezVazquezEncisoPeraltaSalas2021}, is the following:
\begin{lemma}\label{lem:overdet}
	Let $\varOmega_0\subset\mathbb{R}^2$ be a bounded $C^2$ domain.  Suppose that $\psi\in C^1(\varOmega_0)$ solves \eqref{Grad Shaf eqs} and
	\begin{equation}\label{eq:dirichlet}
	\psi=0\quad\hbox{on }\partial\varOmega_0,
	\end{equation}
	together with the   Bernoulli-Neumann boundary condition
	\begin{equation}\label{eq:bernoulli-bc}
	\frac1{|\vec{\xi}|^2}
	\left[(h^2+x_2^2)(\partial_{x_1}\psi)^2+(h^2+x_1^2)(\partial_{x_2}\psi)^2
	-2x_1x_2\partial_{x_1}\psi\,\partial_{x_2}\psi+F(0)^2\right]=c 
	\quad\hbox{on }\partial\varOmega_0,
	\end{equation}
where $ c $ is a constant.	Let $u(x_1,x_2,x_3)=Q_{\frac{x_3}{h}}u^*\left (R_{-\frac{x_3}{h}}(x_1,x_2)\right )$ and $ p(x_1,x_2,x_3)=p^*\left (R_{-\frac{x_3}{h}}(x_1,x_2)\right )$, where
	\begin{equation*}
	u^*=\begin{cases}
	\hbox{the field in \eqref{2-5-1}},&x\in\varOmega_0,\\
	0,&x\notin\varOmega_0,
	\end{cases}
	\qquad
	p^*=\begin{cases}
	\hbox{the pressure in \eqref{eq:pressure}},&x\in\varOmega_0,\\
	H(0)-c/2,&x\notin\varOmega_0.
	\end{cases}
	\end{equation*}
	Then $ (u,p) $ is a weak helically symmetric solution of the stationary Euler equations in $\mathbb{R}^3$.
\end{lemma}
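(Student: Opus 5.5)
We will verify the weak formulation directly: for every divergence-free test field $w\in C_c^\infty(\mathbb{R}^3;\mathbb{R}^3)$ we need
\[
\int_{\mathbb{R}^3} (u\otimes u):\nabla w\,dx+\int_{\mathbb{R}^3} p\,\nabla\cdot w\,dx=0 ,
\]
and for every $\phi\in C_c^\infty(\mathbb{R}^3)$ we need $\int u\cdot\nabla\phi\,dx=0$.

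Let $D:=\{x: R_{-x_3/h}(x_1,x_2)\in\varOmega_0\}$ be the helical tube swept by $\varOmega_0$. Since $\varOmega_0$ is $C^2$ and bounded, $D$ is a $C^2$ domain in $\mathbb{R}^3$, invariant under the helical motions generated by $\vec\xi$. By the Section 2 computation (taken as given), the field $u$ built from \eqref{2-5-1} with $\psi$ solving \eqref{Grad Shaf eqs} is a classical solution of \eqref{eq:euler} in $D$, with pressure $p$ given by \eqref{eq:pressure}. Outside $\overline D$ we have $u=0$ and $p$ constant, which is trivially a solution. Hence everything reduces to checking the distributional contributions concentrated on $\partial D$.

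Assuming $\psi\in C^1(\overline{\varOmega_0})$ (implicit in the hypotheses, since \eqref{eq:bernoulli-bc} is imposed on the boundary), we integrate by parts separately in $D$ and in $\mathbb{R}^3\setminus\overline D$. The interior equations then leave only boundary terms, with $\nu$ the outer normal of $D$:
\[
\int_{\partial D}(u\cdot\nu)\,\phi\,dS,\qquad \int_{\partial D}\big[(u\cdot\nu)(u\cdot w)+(p_{\rm in}-p_{\rm out})(w\cdot\nu)\big]dS .
\]

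\textbf{Tangency.} The first key step is to show $u\cdot\nu=0$ on $\partial D$. In the helical frame the velocity is $u=\frac{1}{|\vec\xi|^2}\big(\nabla^\perp$-type terms in $\psi\big)+\frac{F}{|\vec\xi|^2}\vec\xi$, which is the usual decomposition $u=\nabla\psi\times\vec\xi/|\vec\xi|^2+F\vec\xi/|\vec\xi|^2$ (up to sign conventions). Because $\psi$, lifted to $\mathbb{R}^3$ as a helically invariant function, vanishes on $\partial D$, its gradient is normal to $\partial D$. Thus $\nabla\psi\times\vec\xi$ is tangent to $\partial D$. Moreover $\vec\xi$ is tangent to $\partial D$ because $D$ is helically invariant. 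This gives $u\cdot\nu=0$ and kills the first boundary integral, as well as the convective term in the second.

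\textbf{Pressure jump.} The second step is to show $p_{\rm in}=p_{\rm out}$ on $\partial D$. Since $\psi=0$ there, $H(\psi)=H(0)$ and $F(\psi)=F(0)$ on $\partial D$. Then \eqref{eq:bernoulli-bc} says exactly that the bracket in \eqref{eq:pressure}, divided by $|\vec\xi|^2$, equals $c$. Hence $p_{\rm in}=H(0)-c/2=p_{\rm out}$, and the remaining boundary term vanishes.

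\textbf{Main obstacle.} The delicate point is the bookkeeping in transferring the 2D boundary data to 3D. We must check that the 3D quantities $u\cdot\nu$ and $|u|^2$ at a point of $\partial D$ are expressed through $\psi$ at the rotated point $R_{-x_3/h}(x_1,x_2)$ exactly by the formulas \eqref{2-5-1} and \eqref{eq:pressure} evaluated on $\partial\varOmega_0$. This follows from the equivariance $u=Q_{x_3/h}u^*(R_{-x_3/h}\cdot)$ and the fact that $Q_\theta$ is orthogonal, but the identification of $\nu$ with the rotated 2D normal (corrected by an $x_3$-component) needs care. Finally, regularity is only $C^1$ up to the boundary, so the integrations by parts should be justified by approximating $D$ from inside with level sets $\{\psi=\pm\varepsilon\}$, or simply by using $u\in C^1(D)\cap C(\overline D)$ together with the divergence theorem on the $C^2$ domain $D$ intersected with the support of the test function.
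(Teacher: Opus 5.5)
Your proposal is correct and follows the paper's route. The paper proves Lemma~\ref{lem:general} by integrating by parts over the helical tube, so that only the boundary terms $(\tilde u\cdot\nu)(\tilde u\cdot\eta)$ and $(\tilde p-c)\,\eta\cdot\nu$ survive; it then notes that $\psi=0$ gives tangency and \eqref{eq:bernoulli-bc} gives $p^*=H(0)-c/2$ on $\partial\varOmega_0$, exactly as you do.

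Two small corrections. Your decomposition should read $u=\frac{h}{|\vec\xi|^2}\nabla\Psi\times\vec\xi+\frac{F}{|\vec\xi|^2}\vec\xi$, with $\Psi$ the helical lift of $\psi$; the factor $h$ is harmless, and this identity usefully makes explicit the tangency step that the paper only asserts. Also, drop the restriction to divergence-free test fields: it kills the pressure term and is weaker than the paper's definition of weak solution, and your boundary computation never uses $\nabla\cdot w=0$, so it already covers arbitrary $w\in C_c^\infty$.
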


Therefore, the key step is to establish the existence of nontrivial solutions to a nonstandard overdetermined boundary value problem associated with a semilinear elliptic equation in two variables. The study of overdetermined elliptic problems originates in the celebrated work of Serrin \cite{Serrin}, which revealed the strong geometric rigidity imposed by the simultaneous prescription of Dirichlet and Neumann boundary data. Since then, overdetermined problems have evolved into a powerful tool for constructing solutions to nonlinear elliptic equations; see \cite{DelaySicbaldi,DominguezEncisoPeraltaSalas,DominguezVazquezEncisoPeraltaSalas2023,PacardSicbaldi}. 
In a different direction, overdetermined elliptic problems have recently emerged in the study of stationary Euler flows in two dimensions, where compactly supported solutions are closely related to Schiffer-type boundary value problems. This perspective has led to a number of striking developments concerning nonradial stationary flows and annular Schiffer domains; see, for instance, \cite{EncisoFernandezRuizSicbaldi2025,GomezSerranoParkShi2025}.

The overdetermined problem arising from \eqref{Grad Shaf eqs} together with \eqref{eq:dirichlet} and \eqref{eq:bernoulli-bc} exhibits several genuinely new features that place it beyond the scope of the existing theory. The first one is the presence of anisotropy. Unlike the setting of \cite{DominguezVazquezEncisoPeraltaSalas2021}, where the leading-order profile is radially symmetric, the matrix $K$ introduces a preferred set of directions and breaks rotational invariance. As a consequence, the natural approximate solutions are no longer close to Euclidean balls but rather to ellipsoidal configurations whose geometry reflects the helical twisting encoded in $K$. A second, equally distinctive phenomenon concerns the asymptotic geometry of the free boundary. In \cite{DominguezVazquezEncisoPeraltaSalas2021}, the boundary deformation converges to zero at leading order. In contrast, for the present problem the deformation of the cross-section $B_\varepsilon(\theta)$ converges to a nontrivial profile
$ B(\theta)=C^*\cos 3\theta +t^* $ for suitable constants $C^*$ and $t^*$. The appearance of this persistent zeroth and third Fourier mode reflects a genuine geometric resonance induced by the helical structure and forces a much more delicate analysis. To the best of our knowledge, neither this anisotropic mechanism nor the emergence of a nonvanishing limiting deformation has an analogue in the previously known existence results for overdetermined semilinear elliptic problems. Actually, we have the following flexible, very explicit existence theorem: 
\begin{theorem}\label{thm:main}
	Let $h>0$. 
	Consider $\widehat{F},H\in C^s((-1,0])$ satisfying
	\begin{align*}
	\widehat{F}(0)=  \widehat{F}'(0)=0,
	\qquad H'(0)>0,
	\end{align*}
	where $s>2$ is not an integer. Then:
	\begin{enumerate}
		\item For each small $\varepsilon>0$ and any $R>0$, there exists a nontrivial piecewise $ C^s $, helically symmetric compactly supported stationary Euler flow $u$ (with pitch $ h $) of the form described in Lemma \ref{lem:overdet} for  a suitable $ C^{s+1} $  planar domain $\varOmega_{R,\varepsilon}$.
		\item The functions defining the solution are
		\begin{align}\label{eq:main-F}
		F(\psi):= \left( \varepsilon^2F_R+  \widehat{F}(\psi)\right) ^{\frac{1}{2}} , 
		\end{align}
		and  $ H(\psi) $,	where $F_R $ is a positive constant depending on $R$.
		\item 
		The domain $\varOmega_{R,\varepsilon}$ is a small deformation of an ellipse centered   at $(R,0)$ of  the $x_1$--$x_2$ plane,
		\begin{equation*} 
		\varOmega_{R,\varepsilon}
		=\left\{(x_1,x_2):
		x_1=R+\varepsilon\sigma_1\rho\cos\theta,
		\quad x_2=\varepsilon\sigma_2\rho\sin\theta,
		\quad 0\le \rho<1+\varepsilon B_\varepsilon(\theta)
		\right\},
		\end{equation*}
		where
		\begin{equation*} 
		\sigma_1=h\sqrt{h^2+R^2},
		\qquad \sigma_2=h^2+R^2.
		\end{equation*}
		Moreover,
		\begin{equation*} 
		B_\varepsilon(\theta)=B^*(\theta)+O(\varepsilon) \  \ \text{in}\ C^{s+1}(\mathbb{T}),
		\qquad
		B^*(\theta)=C^*\cos 3\theta+t^*,
		\end{equation*}
with the constants 
		\begin{equation*} 
		C^*=\frac{R^3\sigma_1^{-1}}{8},
		\qquad
		t^*=\frac{10}{9\sigma_2^2H'(0)}h^2\sqrt{F_R}.
		\end{equation*}
		\item The stream function $\psi$ belongs to $C^{s+1}$ in $\varOmega_{R,\varepsilon}$ up to the boundary. Moreover, $\varepsilon^2F_R+\widehat F(\psi)>0$,   $F (\psi) > 0$ and $H  (\psi)$ are of class $C^s$ in $\varOmega_{R,\varepsilon}$.
	\end{enumerate}
		
\end{theorem}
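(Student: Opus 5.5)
By Lemma \ref{lem:overdet}, it suffices to construct a $C^{s+1}$ domain $\varOmega_{R,\varepsilon}$ and a function $\psi$ satisfying \eqref{Grad Shaf eqs}, \eqref{eq:dirichlet} and \eqref{eq:bernoulli-bc}, with $F$ given by \eqref{eq:main-F}. The plan is a perturbative (Crandall--Rabinowitz/implicit function type) argument after a blow-up around the point $(R,0)$.

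\textbf{Rescaling.} Write $x_1=R+\varepsilon\sigma_1 y_1$, $x_2=\varepsilon\sigma_2 y_2$. At $(R,0)$ the matrix $K$ equals $\mathrm{diag}\bigl(1,\,h^2/(h^2+R^2)\bigr)$. The choice of $\sigma_1,\sigma_2$ is exactly the linear change of variables that turns $\nabla\cdot(K(R,0)\nabla)$ into a constant multiple of the Laplacian in $y$. This explains why the leading-order cross-section is an ellipse.

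Set $\psi=\varepsilon^2\phi$, so that $F(\psi)^2=\varepsilon^2F_R+\widehat F(\varepsilon^2\phi)$. Since $\widehat F(0)=\widehat F'(0)=0$, this gives $FF'=\tfrac12\widehat F'(\varepsilon^2\phi)=O(\varepsilon^2)$. The right-hand side of \eqref{Grad Shaf eqs} is then $H'(0)+O(\varepsilon)$, where the $O(\varepsilon)$ term $-2h^2\varepsilon\sqrt{F_R}/|\vec\xi|^4$ comes from $F$. After rescaling, the leading problem is
\[
\Delta_y\phi_0=c_0H'(0)\ \text{in } B_1,\qquad \phi_0=0 \text{ on }\partial B_1,
\]
with $\phi_0=\tfrac{c_0H'(0)}4(|y|^2-1)$. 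This solution is negative inside, consistent with the hypothesis that $\widehat F,H$ are defined on $(-1,0]$. Its normal derivative is constant on $\partial B_1$.

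The Bernoulli condition reads $|\nabla\psi|_K^2+\varepsilon^2F_R=\varepsilon^2 c$ after choosing $c=\varepsilon^2c'$. The normal derivative $\varepsilon\partial_\nu\phi$ and the $F$ contribution are then of the same order, which is how $F_R$ gets tied to $R$.

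\textbf{Expansion in $\varepsilon$.} Parametrize the boundary by $\rho=1+\varepsilon B(\theta)$. Pull everything back to $B_1$ and write $\phi=\phi_0+\varepsilon\phi_1+\dots$. Expanding $K$ and $|\vec\xi|^{-2}$ to first order in $\varepsilon$ around $(R,0)$ produces $O(\varepsilon)$ anisotropic terms. These contain first and third Fourier modes, such as $y_1|y|^2$ and $y_1^3$, because the coefficients depend on $x_1,x_2$ and are multiplied by the quadratic $\phi_0$.

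The overdetermined problem becomes $G(\varepsilon,B,c',F_R)=0$, where $G(\varepsilon,B)$ is the Neumann trace, on $\partial B_1$, of the Dirichlet solution minus the target. Its linearization in $B$ at $\varepsilon=0$ is the Dirichlet-to-Neumann-type operator $B\mapsto (|k|-1)\widehat B_k$ (up to constants), acting mode by mode. The kernel consists of the modes $k=\pm1$ (translations). Mode $0$ is controlled by the free constant $c'$, or equivalently by $F_R$.

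Solving the $O(\varepsilon)$ equation for $B^*$ is where the constants come from:
\begin{itemize}
\item the mode-$3$ forcing $\propto R^3\sigma_1^{-1}$ is divided by $(3-1)$, which produces $C^*=R^3\sigma_1^{-1}/8$;
\item the mode-$0$ forcing from the $\sqrt{F_R}$ term gives $t^*$.
\end{itemize}
I would compute these explicitly by the expansion above.

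\textbf{Main obstacle: the translation kernel.} The hard step is the solvability obstruction in modes $k=\pm1$. The mode-$1$ components of the $O(\varepsilon)$ forcing must vanish, or be absorbed. I would first handle this by symmetry: impose evenness in $y_2$, i.e.\ $B(\theta)=B(-\theta)$, which kills the $\sin\theta$ mode. For $\cos\theta$, I would absorb it by shifting the center, or equivalently by treating the center $R$ as part of the unknown, or by using the freedom in $F_R$. Concretely, $F_R$ is fixed by requiring the $\cos\theta$ projection to vanish; this is a balance between the curvature-induced force and the swirl, analogous to the axisymmetric case in \cite{DominguezVazquezEncisoPeraltaSalas2021}. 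This is why $F_R$ depends on $R$.

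\textbf{Conclusion.} Once the finite-dimensional obstruction is resolved, the implicit function theorem in the spaces $C^{s+1}(\mathbb T)$ (even functions, modes $\neq\pm1$) gives $B_\varepsilon=B^*+O(\varepsilon)$ in $C^{s+1}(\mathbb{T})$. The non-integer $s$ is needed for Schauder theory, which yields $\psi\in C^{s+1}$ up to the boundary. Positivity of $\varepsilon^2F_R+\widehat F(\psi)$ follows from $\widehat F(\psi)=O(\psi^2)=O(\varepsilon^4)$. Applying Lemma \ref{lem:overdet} finishes the proof.
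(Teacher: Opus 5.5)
\textbf{There is a genuine gap: the treatment of the constant and $\cos\theta$ Fourier modes.} This is the step you flag as the main obstacle, and it is inconsistent as written.

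First, your Bernoulli condition $|\nabla\psi|_K^2+\varepsilon^2F_R=\mathrm{const}$ drops the weight $|\vec{\xi}|^{-2}$ that multiplies $F(0)^2$ in \eqref{eq:bernoulli-bc}. With your formula both free constants are pure constants, so neither can act on the $\cos\theta$ mode. The only lever on that mode is the $x_1$-dependence $|\vec{\xi}|^2=\sigma_2+2R\sigma_1\varepsilon x+O(\varepsilon^2)$. Translating the center or varying $R$ cannot help either. The $\cos\theta$-projection of $\mathcal F_0(\varepsilon,B)$ equals $\tfrac{5\pi}{8}R\sigma_1\sigma_2^2H'(0)^2\,\varepsilon+O(\varepsilon^2)$ for every $B$ (translations included) and every $R>0$.

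Second, your unknowns outnumber the equations by one, so the linearization has a kernel. The unknowns are $B$ including its constant mode $B_0$, plus $c'$, plus $F_R$; the equations are the even modes $0,1,2,\dots$. Suppose $F_R$ kills $\cos\theta$ and $c'$ kills the constant mode at every order. Then $F_R$ becomes $\varepsilon$-dependent, and $B_0$ (hence $t^*$) is undetermined. But the statement fixes $F_R$ as an $\varepsilon$-independent constant, and $t^*$ only has meaning in that normalization. Replacing $\varepsilon$ by $\varepsilon(1+\delta\varepsilon)$ shifts $t^*$ by $-\delta$ at the cost of an $O(\varepsilon)$ change in $F_R$.

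\textbf{What the paper does instead.} It multiplies through by $|\vec{\xi}|^2$ and uses the $B$-dependent Bernoulli constant $c_{\varepsilon,B}$ to enforce $\mathcal F(\varepsilon,B)\perp\cos\theta$. Then $\mathcal F(0,B)\equiv\kappa$ for all $B$, which fixes $F_R=-\kappa$ once and for all. The constant mode at higher order is solved by $B_0$. This is where the quantitative content of the theorem lies, and your sketch omits it.

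(i) $F_R>0$ is not automatic. Write $\bar c:=\lim_{\varepsilon\to0}c_{\varepsilon,B}$; then $-\kappa=\sigma_2\bar c-4A_0^2\sigma_2^{-1}$. The value $\bar c=\tfrac5{16}\sigma_2^2H'(0)^2$ is obtained only after computing $A_1$ and the $\cos\theta\sin^2\theta$ terms. This gives $-\kappa=\tfrac1{16}\sigma_2^3H'(0)^2>0$, a sign your final positivity step simply assumes.

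(ii) $t^*$ requires the $O(\varepsilon)$ correction $C_3$ of $c_{\varepsilon,B^*}$. That in turn requires the second-order expansion $\phi_2$ of the Dirichlet solution.

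(iii) On the constant mode, $D_B\mathcal G(0,B^*)$ is not your $(|k|-1)$ multiplier. Dilations rescale the Bernoulli constant, contributing $-2\sigma_2\bar c\,B_0$, and $B_2,B_4$ also feed into mode $0$. This changes the coefficient from $8A_0^2\sigma_2^{-1}>0$ to $-\tfrac12A_0\sigma_2H'(0)<0$, and its nonvanishing must be checked.

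A minor slip: $K(R,0)=\mathrm{diag}(h^2/\sigma_2,1)$, not $\mathrm{diag}(1,h^2/\sigma_2)$. With your matrix the given $\sigma_1,\sigma_2$ would not yield a Laplacian.
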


	\begin{remark}\label{rem:constants}
	In fact, the value of the constants and the structure of the solutions is completely explicit:
	\begin{enumerate}
		\item The boundary $\partial\varOmega_{R,\varepsilon}$ is  approximately an ellipse  defined by an equation of the form $$\frac{(x_1-R)^2}{\sigma_1^2 }+\frac{x_2^2}{\sigma_2^2}-\varepsilon^2=O(\varepsilon^3).$$
		\item The constant $F_R$ is
		\[
		F_R := \frac1{16} \left (h^2+R^2\right )^3H'(0)^2 > 0 ,
		\]
		and the constant $c$ in the Neumann condition is of the form
		\[
		c = \frac{5 }{16}\left (h^2+R^2\right )^2H'(0)^2\varepsilon^2  + O(\varepsilon^3) .
		\]
		\item The function $\psi$ is of the form
		\[
		\psi = \frac14 \left [ \left( h^2+R^2\right)^2 H'(0)  \right ]
		\left [ \frac{(x_1-R)^2}{\sigma_1^2 }+\frac{x_2^2}{\sigma_2^2}-\varepsilon^2 \right ] + O(\varepsilon^3) .
		\]
		\item  The vorticity,
		\begin{equation*} 
		\begin{split}
\vec{w}=&\left( \frac{x_2}{h}\left( \frac{F'(\psi)F(\psi)}{|\vec{\xi}|^2}-H'(\psi) -\frac{F'(\psi)\left( x_1\partial_{x_1}\psi+x_2\partial_{x_2}\psi\right) }{|\vec{\xi}|^2} \right)+\frac{F'(\psi)\partial_{x_2}\psi}{h}\right)  \textbf{e}_1\\
&-\left( \frac{x_1}{h}\left( \frac{F'(\psi)F(\psi)}{|\vec{\xi}|^2}-H'(\psi) -\frac{F'(\psi)\left( x_1\partial_{x_1}\psi+x_2\partial_{x_2}\psi\right) }{|\vec{\xi}|^2} \right)+\frac{F'(\psi)\partial_{x_1}\psi}{h}\right)  \textbf{e}_2\\
&+\left( \frac{F'(\psi)F(\psi)}{|\vec{\xi}|^2}-H'(\psi) -\frac{F'(\psi)\left( x_1\partial_{x_1}\psi+x_2\partial_{x_2}\psi\right) }{|\vec{\xi}|^2} \right) \textbf{e}_3,
		\end{split}
		\end{equation*}
		 is also of class $C^{s-1}$ up to the boundary.
	\end{enumerate}
\end{remark}
Several comments are in order. The theorem gives more than a perturbative existence result. It identifies the leading anisotropic scale of the cross-section and the first nontrivial boundary correction. The term $C^*\cos 3\theta+t^*$ is a genuinely helical effect: it cannot be removed by translating, rescaling, or rotating the leading ellipse. Thus the constructed vortex tubes are not merely small elliptic tubes, but carry an intrinsic threefold modulation generated by the interaction between the helical geometry and the overdetermined Bernoulli-Neumann condition.

Furthermore, our results provide a rigorous realization of a class of helical Kelvin waves with compactly supported cross-sections, thereby establishing a direct connection with the conjecture on the existence of helical Kelvin waves. It has been conjectured that perturbations of a helical vortex tube may give rise to three-dimensional incompressible Euler flows with helical symmetry \cite{LucasDritschel}. This conjecture was  addressed in the absence of swirl, where \cite{CaoFanLiQin2026} constructed helically symmetric solutions generated by such perturbations. In contrast, Theorem \ref{thm:main} establishes the existence of smooth, compactly supported helical Kelvin waves in the presence of swirl, thereby extending the previous theory to a substantially more general setting.


The paper is organized as follows: in Section \ref{sec:Grad} we will derive the Grad--Shafranov formulation \eqref{Grad Shaf eqs} of the three-dimensional helically symmetric Euler equation and prove Lemma \ref{lem:overdet}. The rest of the paper is devoted to the proof of Theorem \ref{thm:main}.
In Section \ref{sec:Dirichlet} we introduce anisotropic scaling transformation and construct solutions to the corresponding  scaled elliptic Equation \eqref{eq:phi} with Dirichlet condition \eqref{eq:Dirichlet_varphi}. 
Asymptotic expansion of the solution in the parameter $\varepsilon$ is computed in Section
\ref{sec:phi-expansion}. The  functional $ \mathcal{F}(\varepsilon,B) $ of the scaled Neumann condition \eqref{eq:Bernoulli-expanded} is estimated in Section \ref{sec:neumann-analysis}. The key is that we need to prove the existence of  the leading deformation $ B^* $ satisfying $ \mathcal{F}(\varepsilon,B^*)=\kappa+O(\varepsilon^2) $. 
The way these solutions change when the domain is perturbed near $ B^* $  is
discussed in Section \ref{sec:variation}.  This result, which we prove in Section \ref{sec: invertibility}, allows us to
complete the proof of Theorem \ref{thm:main}. We briefly discuss the existence of compactly supported solutions defined by functions $ F $ and $ H $ different from those considered in Theorem \ref{thm:main} in Section \ref{sec:different}.

\section{The Grad-Shafranov formulation}\label{sec:Grad}
In this section, we will derive the Grad-Shafranov formulation for stationary solutions of \eqref{eq:euler} with helical symmetry, in terms of the Cartesian coordinates $ (x_1,x_2,x_3) $.  Notice that \cite{GavrilovHelical} deduced a formulation for helically symmetric steady flows in cylindrical coordinates $ (r,\theta,z) $. A similar 2D model for time dependent helically symmetric solutions of \eqref{eq:euler} with swirl was given in \cite{QinWan}. 

\subsection{Some notations and introduction of a stream function}
Let us start by introducing some notations of helically symmetric functions and vector fields. Let $ x=(x_1,x_2,x_3)^t $ in Cartesian coordinates, where $ x^t $ is the transpose of   $ x $. 
Let $ h>0 $. A transformation group is denoted as  $ \mathcal{H}_h:=\{\bar{Q}_{h,\theta}:\mathbb{R}^3\to\mathbb{R}^3\mid  \theta\in\mathbb{R}\} $, where
\begin{equation*}
\bar{Q}_{h,\theta}(x):=Q_\theta(x)+\begin{pmatrix}
0  \\
0  \\ 
h\theta
\end{pmatrix}=\begin{pmatrix}
x_1\cos\theta+x_2\sin\theta  \\
-x_1\sin\theta +x_2\cos\theta  \\ 
x_3+h\theta
\end{pmatrix}.
\end{equation*}	
Helical functions and fields are defined as follows.  A function $f:\mathbb R^3\rightarrow\mathbb R $ is  helical, if $$ f(\bar{Q}_{h,\theta}(x))=f(x),  \ \ \forall\, \theta\in\mathbb{R}, x\in \mathbb R^3, $$
and a vector field $ u=(u_1,u_2,u_3)^t:\mathbb R^3\rightarrow\mathbb R^3$ is   helical, if $$ u(\bar{Q}_{h,\theta}(x))= Q_{\theta} u(x),\ \ \forall\, \theta\in\mathbb{R}, x\in \mathbb{R}^3. $$
Naturally, a   pair ($u, p$) is called a helical solution  of \eqref{eq:euler},  if ($u, p$) solves \eqref{eq:euler} and both the field $ u $ and the function $ p  $ are helical. 
Denote the tangent field of the group $ \mathcal{H}_h $
\begin{equation*}
\vec{\xi}(x):= (x_2, -x_1, h)^t.
\end{equation*}
Then the magnitude  of $ \vec{\xi} $ is $ |\vec{\xi}|=\sqrt{h^2+x_1^2+x_2^2} $. A simple computation yields that   a $ C^1 $   function $f$ and a $ C^1 $ vector field  $u$ are  helical, if and only if $$\vec{\xi}\cdot \nabla f=0,\quad \vec{\xi}\cdot \nabla  u=(u_2,-u_1,0)^t.$$
For a helical function $ f $ and a helical field $ u $, we define their projections on $ \mathbb{R}^2 $
$$u^*(x_1,x_2):=u(x_1,x_2,0), \quad f^*(x_1,x_2):=f(x_1,x_2,0).$$
Then $ (u,f) $ is uniquely determined by $ (u^*,f^*) $ by the formulas
$$ u(x_1,x_2,x_3)=Q_{\frac{x_3}{h}}u^*\left (R_{-\frac{x_3}{h}}(x_1,x_2)\right ),\quad f(x_1,x_2,x_3)= f^*\left (R_{-\frac{x_3}{h}}(x_1,x_2)\right ). $$
The circulation of $ u $  is defined as 
\begin{equation*} 
F :=u \cdot \vec{\xi}.
\end{equation*} 

Now let $  (u, p)  $ be a helical solution pair   of \eqref{eq:euler} and $ \vec{w}:=\nabla\times u $ be its vorticity field. We  define an auxiliary field and its curl field as
\begin{equation}\label{orth field} 
v=(v_1,v_2,v_3):=u-\frac{F\vec{\xi}}{|\vec{\xi}|^2},\quad \vec{\zeta}:=\nabla\times v.
\end{equation}
One computes directly that  
\begin{lemma}\label{lm2-2}
Let $  (u, p)  $ be a helical solution of \eqref{eq:euler}. Then
 \begin{itemize}
 	\item [(a)]  
 	$F$ is  a helical function.
 	\item [(b)]  
 	 $v$ is   helically symmetric, divergence free and $ v\cdot \vec{\xi}=0 $.	
 	\item [(c)]  
 	 $\vec{w}$ and $ \vec{\zeta} $ are  helically symmetric.		
 	\item [(d)]  Set $\vec{w}:=(w_1,w_2,w_3)^t $ and  $\vec{\zeta}:=(\zeta_1,\zeta_2,\zeta_3)^t $ in Cartesian coordinates. 
 	Then 
 	\begin{equation}\label{2-1}
 		\vec{w}=\frac{w_3}{h}\vec{\xi}+\frac{1}{h}(\partial_{x_2}F, -\partial_{x_1}F,0)^t,
 	\end{equation}
 and
 	\begin{equation}\label{2-2}
 		w_3=\zeta_3-\frac{2h^2 F}{|\vec{\xi}|^4}-\frac{x_1 \partial_{x_1}F+x_2\partial_{x_2}F}{|\vec{\xi}|^2}.
 	\end{equation}
 \end{itemize}

\end{lemma}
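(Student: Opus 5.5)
The plan is to check everything through the infinitesimal characterization of helical symmetry recalled just above the lemma. A $C^1$ function $f$ is helical iff $\vec{\xi}\cdot\nabla f=0$. A $C^1$ field $u$ is helical iff $\vec{\xi}\cdot\nabla u=(u_2,-u_1,0)^t$. The basic observation is that $\vec{\xi}$ is itself a helical field. Indeed, $\vec{\xi}(x)=Jx+h\mathbf{e}_3$ with $J$ the rotation generator in the $x_1$--$x_2$ plane, and $Q_\theta$ commutes with $J$, so $\vec{\xi}(\bar Q_{h,\theta}x)=Q_\theta\vec{\xi}(x)$. Consequently $|\vec{\xi}|^2=h^2+x_1^2+x_2^2$ is a helical function.

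\emph{Part (a).} This follows from $F(\bar Q_{h,\theta}x)=Q_\theta u(x)\cdot Q_\theta\vec{\xi}(x)=u\cdot\vec{\xi}(x)$, since $Q_\theta$ is orthogonal.

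\emph{Part (b).} The function $F/|\vec{\xi}|^2$ is helical and $\vec{\xi}$ is a helical field, so $F\vec{\xi}/|\vec{\xi}|^2$ is helical, and hence so is $v$. Next, $v\cdot\vec{\xi}=F-F=0$. For the divergence, $\nabla\cdot\vec{\xi}=0$, so
\[
\nabla\cdot\bigl(F\vec{\xi}/|\vec{\xi}|^2\bigr)=\vec{\xi}\cdot\nabla\bigl(F/|\vec{\xi}|^2\bigr)=0,
\]
because this function is helical. Therefore $\nabla\cdot v=\nabla\cdot u=0$.

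\emph{Part (c).} I would use the fact that the curl commutes with orientation-preserving rigid motions. Differentiating $u(\bar Q_{h,\theta}x)=Q_\theta u(x)$ gives $(Du)(\bar Q_{h,\theta}x)\,Q_\theta=Q_\theta\,Du(x)$. Since $\det Q_\theta=1$, the antisymmetric part of $Du$ transforms as a vector, so $\vec{w}(\bar Q_{h,\theta}x)=Q_\theta\vec{w}(x)$. The same argument applied to $v$ gives the claim for $\vec{\zeta}$.

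\emph{Part (d), formula \eqref{2-1}.} I first establish the identity $\vec{\xi}\times\vec{w}=\nabla F$. We have $\vec{\xi}\times(\nabla\times u)=(Du)^t\vec{\xi}-(\vec{\xi}\cdot\nabla)u$ and $\nabla F=(Du)^t\vec{\xi}+(D\vec{\xi})^tu$. A direct computation gives $(D\vec{\xi})^tu=(-u_2,u_1,0)^t$, and helicity gives $(\vec{\xi}\cdot\nabla)u=(u_2,-u_1,0)^t$. Substituting these, the two expressions coincide.

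Next, let $W:=\frac{w_3}{h}\vec{\xi}+\frac1h(\partial_{x_2}F,-\partial_{x_1}F,0)^t$. Its third component is $w_3$. A short cross-product computation gives
\[
\vec{\xi}\times W=\Bigl(\partial_{x_1}F,\ \partial_{x_2}F,\ \tfrac{x_1\partial_{x_2}F-x_2\partial_{x_1}F}{h}\Bigr)^t .
\]
The third entry equals $\partial_{x_3}F$ by the helicity relation $x_2\partial_{x_1}F-x_1\partial_{x_2}F+h\partial_{x_3}F=0$, so $\vec{\xi}\times W=\nabla F$.

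Hence $\vec{\xi}\times(\vec{w}-W)=0$, so $\vec{w}-W$ is parallel to $\vec{\xi}$. Its third component vanishes and $\xi_3=h>0$, so $\vec{w}=W$. This is \eqref{2-1}.

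\emph{Part (d), formula \eqref{2-2}.} Write $g=F/|\vec{\xi}|^2$. Then
\[
\vec{w}=\vec{\zeta}+\nabla\times(g\vec{\xi})=\vec{\zeta}+\nabla g\times\vec{\xi}+g\,\nabla\times\vec{\xi},
\]
with $\nabla\times\vec{\xi}=(0,0,-2)^t$. The third component of $\nabla g\times\vec{\xi}$ is $-(x_1\partial_{x_1}g+x_2\partial_{x_2}g)$. Insert $\partial_{x_i}g=\partial_{x_i}F/|\vec{\xi}|^2-2x_iF/|\vec{\xi}|^4$. The $F$-terms then combine as
\[
\frac{2(x_1^2+x_2^2)F}{|\vec{\xi}|^4}-\frac{2F}{|\vec{\xi}|^2}=-\frac{2h^2F}{|\vec{\xi}|^4},
\]
which yields \eqref{2-2}.

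The only step needing real care is the identity $\vec{\xi}\times\vec{w}=\nabla F$. One must track signs in $(D\vec{\xi})^tu$ and use the helicity relation to eliminate $\partial_{x_3}F$; everything else is bookkeeping.
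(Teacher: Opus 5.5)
Your proof is correct and follows the same route as the paper, which only asserts that (a)--(d) follow by direct computation from the definitions of $\vec{\xi}$, $F$, $\vec{w}$, $\vec{\zeta}$ and the infinitesimal helicity relations. You supply those details, and the key checks are right: the sign of $(D\vec{\xi})^t u$, the cross product $\vec{\xi}\times W$, and the cancellation producing $-2h^2F/|\vec{\xi}|^4$. The one organizational difference is that you obtain \eqref{2-1} from the identity $\vec{\xi}\times\vec{w}=\nabla F$ together with a parallelism argument, rather than component by component. This makes clear that \eqref{2-1} and \eqref{2-2} are purely kinematic, with the Euler equation entering only through $\nabla\cdot u=0$ in (b).
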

 
\begin{proof}
The calculation of \eqref{2-1} and \eqref{2-2} is obtained directly  from the definition of $ \vec{\xi}, F, \vec{w}   $ and $ \vec{\zeta} $.
\end{proof}
 
Based on this, we are able to  introduce the  ``stream function'' of   solutions to \eqref{eq:euler} under helical symmetry. 
\begin{lemma}\label{lm2-4}
Let $  (u, p)  $ be a helical solution of \eqref{eq:euler} and $ v^*=(v^*_1,v^*_2,v^*_3)^t, \vec{\zeta}^*=(\zeta_1^*,\zeta_2^*,\zeta_3^*)^t $ in Cartesian coordinates. Here $v^*$ and $\vec{\zeta}^*$ are defined as $u^*$ before. Then there exists a function  $\psi: \mathbb R^2\mapsto \mathbb R$ such that 
	   \begin{equation}\label{2-5}
	   	 \begin{pmatrix}
	   	 	v_1^*(x_1,x_2) \\
	   	 	v_2^*(x_1,x_2)
	   	 \end{pmatrix}=\frac{1}{|\vec{\xi}|^2}\begin{pmatrix}
	   	- x_1x_2 &   h^2+x_1^2  \\
	   	 -h^2-x_2^2  &  x_1x_2
   	 \end{pmatrix}  \begin{pmatrix}
   	 \partial_{x_1}\psi(x_1,x_2) \\
   	 \partial_{x_2}\psi(x_1,x_2)
    \end{pmatrix}. 
	   \end{equation}
Moreover, 
	 \begin{equation}\label{2-6}
	 	 \zeta_3^*(x_1,x_2)=\mathcal{L}\psi:=-\nabla\cdot(K(x_1,x_2)\nabla\psi)(x_1,x_2),  
	 \end{equation}
where  
the coefficient matrix
	\begin{equation*} 
		K(x_1,x_2)=\frac{1}{h^2+x_1^2+x_2^2}
		\begin{pmatrix}
			h^2+x_2^2 & -x_1x_2 \\
			-x_1x_2 &  h^2+x_1^2
		\end{pmatrix}.
	\end{equation*}
\end{lemma}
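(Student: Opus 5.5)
The plan is to work entirely with the projected field $v^*$ in the plane $x_3=0$ and to use the two constraints on $v$ from Lemma \ref{lm2-2}(b): $\nabla\cdot v=0$ and $v\cdot\vec{\xi}=0$. The idea is to turn these into a single planar divergence-free condition, which then produces a stream function by the Poincaré lemma.

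First, I express everything through $v^*$. Since $v$ is helical, $\vec{\xi}\cdot\nabla v=(v_2,-v_1,0)^t$. Writing $\vec{\xi}\cdot\nabla=x_2\partial_{x_1}-x_1\partial_{x_2}+h\partial_{x_3}$, we can solve for $\partial_{x_3}v$ in terms of the horizontal derivatives. The third component gives $h\partial_{x_3}v_3=-(x_2\partial_{x_1}-x_1\partial_{x_2})v_3$. The divergence-free condition at $x_3=0$ then becomes a first-order planar identity
\[
\partial_{x_1}v_1^*+\partial_{x_2}v_2^*-\tfrac1h\left(x_2\partial_{x_1}-x_1\partial_{x_2}\right)v_3^*=0 .
\]
Next, the orthogonality $v\cdot\vec{\xi}=0$ gives $v_3^*=-(x_2v_1^*-x_1v_2^*)/h$. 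Substituting this and collecting terms, the identity should take the form $\partial_{x_1}A_1+\partial_{x_2}A_2=0$, where $(A_1,A_2)=M(x)(v_1^*,v_2^*)^t$ for an explicit matrix $M$. Heuristically, $M=\frac{1}{h^2}\begin{pmatrix} h^2+x_2^2 & -x_1x_2\\ -x_1x_2 & h^2+x_1^2\end{pmatrix}$, which equals $\frac{|\vec{\xi}|^2}{h^2}K$; the scalar prefactor is harmless and can be absorbed. I will verify this by direct expansion. The term $\tfrac1h(x_2\partial_{x_1}-x_1\partial_{x_2})$ applied to $(x_2v_1^*-x_1v_2^*)/h$ yields the quadratic coefficients, and the lower-order terms cancel because $x_2\partial_{x_1}-x_1\partial_{x_2}$ is divergence-free as a vector field.

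On $\mathbb{R}^2$, which is simply connected, a divergence-free planar field $(A_1,A_2)$ admits $\psi$ with $(A_1,A_2)=(\partial_{x_2}\psi,-\partial_{x_1}\psi)$. Inverting $M$ gives \eqref{2-5}. Here $\det M=|\vec{\xi}|^2/h^2$, and
\[
M^{-1}\begin{pmatrix}0&1\\-1&0\end{pmatrix}=\frac{1}{|\vec{\xi}|^2}\begin{pmatrix}-x_1x_2& h^2+x_1^2\\ -(h^2+x_2^2)& x_1x_2\end{pmatrix},
\]
which matches \eqref{2-5}. This inversion is a routine check.

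For \eqref{2-6}, I compute $\zeta_3^*=\partial_{x_1}v_2^*-\partial_{x_2}v_1^*$. Note that $\zeta_3$ involves only horizontal derivatives, so no $\partial_{x_3}$ enters. Substituting \eqref{2-5},
\[
\zeta_3^*=-\partial_{x_1}\Big(\tfrac{(h^2+x_2^2)\partial_1\psi-x_1x_2\partial_2\psi}{|\vec{\xi}|^2}\Big)-\partial_{x_2}\Big(\tfrac{-x_1x_2\partial_1\psi+(h^2+x_1^2)\partial_2\psi}{|\vec{\xi}|^2}\Big)=-\nabla\cdot(K\nabla\psi).
\]
This is immediate once \eqref{2-5} is in hand.

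The main obstacle is the first step: getting the signs and the coefficients exactly right when converting the three-dimensional divergence into a planar conservation law via the helical relation, including the $(v_2,-v_1,0)$ rotation terms, which affect only the horizontal components and not $\partial_{x_3}v_3$. I will double-check the result against the known formula \eqref{2-5-1} for $u$.
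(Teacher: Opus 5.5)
Your proposal is correct and follows essentially the same route as the paper. Both substitute $v\cdot\vec{\xi}=0$ and $\vec{\xi}\cdot\nabla v_3=0$ into $\nabla\cdot v=0$ to get the planar conservation law $\partial_{x_1}[(h^2+x_2^2)v_1^*-x_1x_2v_2^*]+\partial_{x_2}[(h^2+x_1^2)v_2^*-x_1x_2v_1^*]=0$, take $\psi$ on the simply connected plane, invert the matrix to obtain \eqref{2-5}, and compute $\zeta_3^*=\partial_{x_1}v_2^*-\partial_{x_2}v_1^*$ directly. Your convention $(A_1,A_2)=(\partial_{x_2}\psi,-\partial_{x_1}\psi)$ is the consistent one: the paper's intermediate display drops the minus sign on $\partial_{x_1}\psi$, although its final formula \eqref{2-5} agrees with your inversion.
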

\begin{proof}
Taking  $v\cdot \vec{\xi}=0$ and   $ \vec{\xi} \cdot\nabla v_3=0$ into $\nabla \cdot v=0$, we have
\begin{equation*} 
\begin{split}
0=&\partial_{x_1} v_1+\partial_{x_2} v_2+\partial_{x_3} v_3
=\frac{1}{h^2}\partial_{x_1}  \left[(h^2+x_2^2)v_1-x_1x_2v_2\right]+\frac{1}{h^2}\partial_{x_2}  \left[(h^2+x_1^2)v_2-x_1x_2v_1\right].
\end{split}
\end{equation*}
Thus we can define a function  $\psi: \mathbb R^2\mapsto \mathbb R$ such that 
\begin{equation*} 
\begin{cases}
\partial_{x_2} \psi(x_1,x_2)&=\frac{1}{h^2}\left[(h^2+x_2^2)v_1(x_1,x_2,0)-x_1x_2v_2(x_1,x_2,0)\right],\\
\partial_{x_1} \psi(x_1,x_2)&=\frac{1}{h^2}  \left[(h^2+x_1^2)v_2(x_1,x_2,0)-x_1x_2v_1(x_1,x_2,0)\right],
\end{cases}
\end{equation*}
which implies \eqref{2-5}. Equation~\eqref{2-6} follows immediately from  \eqref{2-5} and  $\zeta_3=\partial_{x_1} v_2-\partial_{x_2} v_1$.
\end{proof}
Actually the velocity field $ u $ and the vorticity field $ \vec{w} $ can be reconstructed  via the stream function $ \psi. $ 
\begin{lemma} \label{lm2-4-1}
$ u $ and   $ \vec{w} $ can be recovered from $ \psi $ by 
$$u(x_1,x_2,x_3)=Q_{\frac{x_3}{h}}u^*\left (R_{-\frac{x_3}{h}}(x_1,x_2)\right ),\ \ \vec{w}(x_1,x_2,x_3)=Q_{\frac{x_3}{h}}\vec{w}^*\left (R_{-\frac{x_3}{h}}(x_1,x_2)\right ),$$
where
\begin{equation*} 
u^*(x_1,x_2)=\begin{pmatrix}
& \frac{1}{|\vec{\xi}|^2}\left(-x_1x_2\partial_{x_1}\psi+(h^2+x_1^2)\partial_{x_2}\psi+F x_2 \right)\\
&\frac{1}{|\vec{\xi}|^2}\left(-(h^2+x_2^2)\partial_{x_1}\psi+x_1x_2\partial_{x_2}\psi-F x_1 \right)\\
&\frac{1}{|\vec{\xi}|^2}\left(-hx_1 \partial_{x_1}\psi-hx_2\partial_{x_2}\psi+F h \right)  
\end{pmatrix},
\end{equation*}
and 
\begin{equation}\label{2-5-2}
\vec{w}^*(x_1,x_2)=\begin{pmatrix}
& \frac{x_2}{h}\left(\mathcal{L}\psi-\frac{2h^2F}{|\vec{\xi}|^4}-\frac{x_1\partial_{x_1}F+x_2\partial_{x_2}F}{|\vec{\xi}|^2} \right)+\frac{\partial_{x_2}F}{h}\\
&\frac{-x_1}{h}\left(\mathcal{L}\psi-\frac{2h^2F}{|\vec{\xi}|^4}-\frac{x_1\partial_{x_1}F+x_2\partial_{x_2}F}{|\vec{\xi}|^2} \right)-\frac{\partial_{x_1}F}{h}\\
&\mathcal{L}\psi-\frac{2h^2F}{|\vec{\xi}|^4}-\frac{x_1\partial_{x_1}F+x_2\partial_{x_2}F}{|\vec{\xi}|^2} 
\end{pmatrix}.
\end{equation}
\end{lemma}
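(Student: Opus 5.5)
The formula for $u^*$ follows from the decomposition \eqref{orth field}, $u=v+F\vec{\xi}/|\vec{\xi}|^2$, evaluated at $x_3=0$. The planar components $v_1^*,v_2^*$ are given by \eqref{2-5} in Lemma \ref{lm2-4}. For the third component we use $v\cdot\vec{\xi}=0$ (Lemma \ref{lm2-2}(b)):
\[
h v_3=-x_2v_1+x_1v_2 .
\]
Substituting \eqref{2-5} and simplifying gives
\[
-x_2v_1^*+x_1v_2^*=\frac{1}{|\vec{\xi}|^2}\Bigl(-x_1(h^2+x_1^2+x_2^2)\partial_{x_1}\psi-x_2(h^2+x_1^2+x_2^2)\partial_{x_2}\psi\Bigr),
\]
so that $v_3^*=-(x_1\partial_{x_1}\psi+x_2\partial_{x_2}\psi)/h$. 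Adding $F\vec{\xi}/|\vec{\xi}|^2=F(x_2,-x_1,h)^t/|\vec{\xi}|^2$ yields the first two components directly. For the third component, the plan is to check that it equals $\frac{1}{|\vec{\xi}|^2}(-hx_1\partial_{x_1}\psi-hx_2\partial_{x_2}\psi+Fh)$. I expect a discrepancy here: the computation gives $-(x_1\partial_{x_1}\psi+x_2\partial_{x_2}\psi)/h$, whereas the stated $\psi$-part is $-h(\cdots)/|\vec{\xi}|^2$. So I would recheck the normalization in the definition of $\psi$ in Lemma \ref{lm2-4} (the $1/h^2$ factor versus $1/|\vec{\xi}|^2$) and the exact form of \eqref{2-5}. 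The stated formula is what comes out if $v^*$ satisfies $v\cdot\vec{\xi}=0$ with the planar components scaled consistently, and I would reconcile the conventions before proceeding.

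The extension to all of $\mathbb{R}^3$ is automatic. Both $u$ and $\vec{w}$ are helical (Lemma \ref{lm2-2}(c)), and the reconstruction formula $u(x)=Q_{x_3/h}u^*(R_{-x_3/h}(x_1,x_2))$ is the general one for helical fields recalled at the beginning of this section. It follows by applying $u(\bar Q_{h,\theta}x)=Q_\theta u(x)$ with $\theta=-x_3/h$, after checking the sign convention of $Q_\theta$ against $R_\theta$.

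For the vorticity, Lemma \ref{lm2-2}(d), formula \eqref{2-1}, gives
\[
\vec{w}=\frac{w_3}{h}(x_2,-x_1,h)^t+\frac1h(\partial_{x_2}F,-\partial_{x_1}F,0)^t .
\]
By \eqref{2-2} and \eqref{2-6},
\[
w_3^*=\zeta_3^*-\frac{2h^2F}{|\vec{\xi}|^4}-\frac{x_1\partial_{x_1}F+x_2\partial_{x_2}F}{|\vec{\xi}|^2},
\qquad \zeta_3^*=\mathcal{L}\psi .
\]
Substituting these gives \eqref{2-5-2} component by component.

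The main obstacle is bookkeeping rather than ideas. One has to verify \eqref{2-2}: the curl of $F\vec{\xi}/|\vec{\xi}|^2$ in the $\mathbf{e}_3$ direction is
\[
\partial_{x_1}\!\Bigl(\frac{-Fx_1}{|\vec{\xi}|^2}\Bigr)-\partial_{x_2}\!\Bigl(\frac{Fx_2}{|\vec{\xi}|^2}\Bigr),
\]
which yields $-2Fh^2/|\vec{\xi}|^4-(x\cdot\nabla F)/|\vec{\xi}|^2$. One also has to keep the normalization of $\psi$ consistent between \eqref{2-5} and the stated $u^*$, as noted above.
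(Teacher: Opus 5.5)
You follow the same route as the paper: $u=v+F\vec{\xi}/|\vec{\xi}|^2$ with \eqref{2-5} and $v\cdot\vec{\xi}=0$ for $u^*$, and \eqref{2-1}, \eqref{2-2}, \eqref{2-6} for $\vec{w}^*$. Your vorticity computation, your check of \eqref{2-2}, and the helical extension are all fine. The one problem is the third component of $u^*$. There you report a discrepancy and leave the step unresolved, but the discrepancy comes from an arithmetic slip, not from any convention. Substituting \eqref{2-5} correctly,
\[
-x_2v_1^*+x_1v_2^*=\frac{1}{|\vec{\xi}|^2}\Bigl[\bigl(x_1x_2^2-x_1(h^2+x_2^2)\bigr)\partial_{x_1}\psi+\bigl(x_1^2x_2-x_2(h^2+x_1^2)\bigr)\partial_{x_2}\psi\Bigr]=-\frac{h^2}{|\vec{\xi}|^2}\bigl(x_1\partial_{x_1}\psi+x_2\partial_{x_2}\psi\bigr).
\]
The $x_1x_2^2$ and $x_1^2x_2$ terms cancel, so the surviving coefficient is $h^2$, not $|\vec{\xi}|^2$. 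Hence $v_3^*=-\frac{h}{|\vec{\xi}|^2}(x_1\partial_{x_1}\psi+x_2\partial_{x_2}\psi)$, not $-(x_1\partial_{x_1}\psi+x_2\partial_{x_2}\psi)/h$. Adding $Fh/|\vec{\xi}|^2$ then gives exactly the stated third component. As written, your proposal therefore does not prove the formula for $u_3^*$, and it wrongly suggests that the statement or \eqref{2-5} needs renormalizing.

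There is also nothing to reconcile in Lemma~\ref{lm2-4}. The $1/h^2$ in its proof is consistent with the $1/|\vec{\xi}|^2$ in \eqref{2-5}, because the matrix $\begin{pmatrix} h^2+x_2^2 & -x_1x_2\\ -x_1x_2 & h^2+x_1^2\end{pmatrix}$ being inverted has determinant $h^2|\vec{\xi}|^2$. The displayed formula for $\partial_{x_1}\psi$ in that proof is missing a minus sign, but \eqref{2-5} itself is correct, and it is all this lemma uses.
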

\begin{proof}
Formula \eqref{2-5-1} comes from $ u=v+\frac{F\vec{\xi}}{|\vec{\xi}|^2} $, \eqref{2-5} and $ v\cdot \vec{\xi}=0 $. Formula \eqref{2-5-2} comes from \eqref{2-1}, \eqref{2-2} and \eqref{2-6}.
\end{proof}
We also have the following property of $ \psi $.
\begin{lemma} \label{lm2-5}
Let $g: \mathbb R^3\mapsto \mathbb R$ be a helical function. Then
	\begin{equation*}
		\left (u \cdot \nabla g\right )^* = \nabla^\perp\psi \cdot\nabla (g^*),
	\end{equation*}
where $ \nabla^\perp:=(\partial_{x_2},-\partial_{x_1}). $
\end{lemma}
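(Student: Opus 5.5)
The plan is a direct computation at the slice $x_3=0$. It uses three facts: the helical symmetry of $g$, the orthogonality $v\cdot\vec\xi=0$ from Lemma \ref{lm2-2}(b), and the representation \eqref{2-5} of $v^*$ in terms of $\psi$.

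First, remove the circulation part of $u$. By \eqref{orth field}, $u=v+F\vec\xi/|\vec\xi|^2$. Since $g$ is helical, $\vec\xi\cdot\nabla g=0$, so
\[
u\cdot\nabla g=v\cdot\nabla g=v_1\partial_{x_1}g+v_2\partial_{x_2}g+v_3\partial_{x_3}g .
\]
Next, eliminate the $x_3$-components. The identity $\vec\xi\cdot\nabla g=0$ reads $h\,\partial_{x_3}g=-(x_2\partial_{x_1}g-x_1\partial_{x_2}g)$. The identity $v\cdot\vec\xi=0$ gives $h\,v_3=-(x_2v_1-x_1v_2)$. Substituting both and evaluating at $x_3=0$, where $\partial_{x_i}g(x_1,x_2,0)=\partial_{x_i}g^*$ for $i=1,2$, we obtain
\[
(u\cdot\nabla g)^*=\frac{1}{h^2}\Big(\big[(h^2+x_2^2)v_1^*-x_1x_2v_2^*\big]\partial_{x_1}g^*+\big[(h^2+x_1^2)v_2^*-x_1x_2v_1^*\big]\partial_{x_2}g^*\Big).
\]
In other words, $(u\cdot\nabla g)^*=h^{-2}\,(|\vec\xi|^2K\,v^*_{\rm h})\cdot\nabla g^*$, where $v^*_{\rm h}=(v_1^*,v_2^*)$.

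The last step inserts \eqref{2-5}, which writes $v^*_{\rm h}=|\vec\xi|^{-2}M\nabla\psi$ with
\[
M=\begin{pmatrix}-x_1x_2 & h^2+x_1^2\\ -h^2-x_2^2 & x_1x_2\end{pmatrix}.
\]
One then computes the $2\times2$ product $|\vec\xi|^2K\cdot|\vec\xi|^{-2}M$. Using $(h^2+x_1^2)(h^2+x_2^2)-x_1^2x_2^2=h^2|\vec\xi|^2$, this product should equal $h^2J$, where $J$ is the rotation with $J\nabla\psi=\nabla^\perp\psi=(\partial_{x_2}\psi,-\partial_{x_1}\psi)$. For instance, the first row gives
\[
\frac{-(h^2+x_2^2)x_1x_2+x_1x_2(h^2+x_2^2)}{|\vec\xi|^2}=0
\qquad\text{and}\qquad
\frac{(h^2+x_2^2)(h^2+x_1^2)-x_1^2x_2^2}{|\vec\xi|^2}=h^2 .
\]
The factors $h^2$ cancel, which yields $(u\cdot\nabla g)^*=\nabla^\perp\psi\cdot\nabla g^*$.

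The only real care point is sign bookkeeping. The intermediate formulas for $\partial_{x_i}\psi$ written in the proof of Lemma \ref{lm2-4} carry a sign inconsistent with \eqref{2-5} in the $\partial_{x_1}\psi$ line. I will therefore work exclusively from the matrix form \eqref{2-5}, which the lemma takes as the definition of $\psi$. Equivalently, I will invert $|\vec\xi|^{-2}M$, whose determinant is $h^2/|\vec\xi|^2$, to get $\nabla^\perp\psi=h^{-2}|\vec\xi|^2K\,v^*_{\rm h}$. This then matches the expression above term by term.
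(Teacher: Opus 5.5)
Your proposal is correct and follows the paper's proof step by step. You drop the $F\vec\xi/|\vec\xi|^2$ part using $\vec\xi\cdot\nabla g=0$, eliminate $v_3$ and $\partial_{x_3}g$ through the two orthogonality relations, substitute \eqref{2-5}, and note that the resulting $2\times 2$ product is $h^2$ times the rotation sending $\nabla\psi$ to $\nabla^\perp\psi$. Your side remark is also accurate: the formula for $\partial_{x_1}\psi$ displayed in the proof of Lemma~\ref{lm2-4} has the opposite sign from what \eqref{2-5} (and the divergence identity) require. Working directly from \eqref{2-5}, as you and the paper's proof of this lemma both do, is the right choice.
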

\begin{proof}
From \eqref{orth field} and Lemma \ref{lm2-2}, we have
\begin{equation}\label{2-6-1}
u  \cdot \nabla g=\left(v+\frac{F\vec{\xi}}{|\vec{\xi}|^2}\right)  \cdot \nabla g=	v \cdot \nabla g. 
\end{equation}
By $v\cdot \vec{\xi}=0$ and $ \vec{\xi}\cdot \nabla g=0 $, we have
\begin{equation}\label{2-6-2}
v_3=\frac{ -x_2v_1+x_1v_2}{h},\ \ \partial_{x_3} g=\frac{-x_2\partial_{x_1} g+x_1\partial_{x_2} g}{h}.
\end{equation}
Taking  \eqref{2-5} and \eqref{2-6-2} into \eqref{2-6-1}, we get
\begin{equation*} 
\begin{split}
(u \cdot \nabla g)^*(x_1,x_2)=&(v_1(x_1,x_2,0),v_2(x_1,x_2,0))\begin{pmatrix}
1+\frac{x_2^2}{h^2} &   -\frac{x_1x_2}{h^2}  \\
-\frac{x_1x_2}{h^2} & 1+\frac{x_1^2}{h^2}
\end{pmatrix}  \begin{pmatrix}
\partial_{x_1}g(x_1,x_2,0) \\
\partial_{x_2}g(x_1,x_2,0)
\end{pmatrix}\\
=&\frac{1}{|\vec{\xi}|^2}\left (\partial_{x_1}\psi(x_1,x_2), \partial_{x_2}\psi(x_1,x_2)\right )\begin{pmatrix}
- x_1x_2 &  -h^2-x_2^2  \\ 
 h^2+x_1^2  &  x_1x_2
\end{pmatrix}\\  
&\begin{pmatrix}
1+\frac{x_2^2}{h^2} &   -\frac{x_1x_2}{h^2}  \\
-\frac{x_1x_2}{h^2} & 1+\frac{x_1^2}{h^2}
\end{pmatrix}
  \begin{pmatrix}
\partial_{x_1}g(x_1,x_2,0) \\
\partial_{x_2}g(x_1,x_2,0)
\end{pmatrix}\\
=&\left (\partial_{x_1}\psi(x_1,x_2), \partial_{x_2}\psi(x_1,x_2)\right )\begin{pmatrix}
0 &  -1 \\ 
1  & 0
\end{pmatrix}   \begin{pmatrix}
\partial_{x_1}g(x_1,x_2,0) \\
\partial_{x_2}g(x_1,x_2,0)
\end{pmatrix}\\
=&\nabla^\perp\psi(x_1,x_2)\cdot\nabla g(x_1,x_2,0).
\end{split}
\end{equation*}
\end{proof}
\subsection{Helical Grad--Shafranov reduction}
Having made   the above preparations,  we can now derive the Grad--Shafranov formulation  for the stream function defined on the transverse $ x_1 $--$ x_2 $ plane.   
We obtain directly  from \eqref{eq:euler} that $$ u\times \vec{w}=\nabla H, $$
 where $ H:=p+\frac{|u|^2}{2} $ is the Bernoulli function.
 
Note that the circulation $ F $ is a helical function. Since $ u\cdot\nabla F=u\cdot\nabla(u\cdot\vec{\xi})=-\nabla p\cdot\vec{\xi}+(u\cdot\nabla\vec{\xi})\cdot u=0, $
from Lemma \ref{lm2-5}, we have $ \nabla^\perp\psi\cdot \nabla F=0 $. This implies that $ F $ is a function of the stream function~$ \psi $,   $$ F=F(\psi). $$

The Bernoulli function $ H $ is also a helical function. Since $ u\cdot\nabla H=u\cdot(u\times \vec{w})=0, $   $ H $ is also a function of~$ \psi $,  $$ H=H(\psi). $$

It follows from the third equation of $ u\times \vec{w}=\nabla H $ that
\begin{equation}\label{2-20}
u_1w_2-u_2w_1=H'(\psi)\partial_{x_3}\psi.
\end{equation}
Taking \eqref{2-1} and \eqref{2-5-1} into Equation \eqref{2-20}, we get
\begin{equation*} 
w_3^*=-H'(\psi)+\frac{F'(\psi)}{|\vec{\xi}|^2}\left( -x_1\partial_{x_1}\psi-x_2\partial_{x_2}\psi\right) +\frac{F'(\psi)F(\psi)}{|\vec{\xi}|^2},
\end{equation*}
which combined with the third equation of \eqref{2-5-2} yields a equation for $ \psi $
\begin{equation*} 
\nabla\cdot(K(x_1,x_2)\nabla\psi)=-\frac{2h^2F(\psi)}{|\vec{\xi}|^4}-\frac{F'(\psi)F(\psi)}{|\vec{\xi}|^2}+H'(\psi).
\end{equation*}
To conclude, we have
\begin{proposition}\label{pp2-7}
The Grad-Shafranov formulation for stationary helically symmetric solutions of three dimensional incompressible Euler equations  is Equation \eqref{Grad Shaf eqs}. For a stream function $ \psi $ that solves Equation~\eqref{Grad Shaf eqs}, one can reconstruct a stationary helically symmetric solution pair $ (u,p) $ of \eqref{eq:euler} in $ \mathbb{R}^3 $ by Lemma \ref{lm2-4-1} and $ p(x_1,x_2,x_3)=p^*\left (R_{-\frac{x_3}{h}}(x_1,x_2)\right ) $ with
$$p^*=H(\psi)-\frac{|u^*|^2}{2}.$$
\end{proposition}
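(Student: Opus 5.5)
The forward implication, that every helical solution yields a stream function solving \eqref{Grad Shaf eqs}, is essentially the computation carried out just before the statement. I would only tidy it up: Lemma \ref{lm2-4} produces $\psi$; Lemma \ref{lm2-5} together with $u\cdot\nabla F=u\cdot\nabla H=0$ gives $F=F(\psi)$ and $H=H(\psi)$; and inserting \eqref{2-1}, \eqref{2-5-1} into the third component \eqref{2-20} of $u\times\vec w=\nabla H$ and comparing with the third entry of \eqref{2-5-2} gives \eqref{Grad Shaf eqs}. The real content is the converse. Given $\psi\in C^2$, $F,H\in C^1$ satisfying \eqref{Grad Shaf eqs}, we define $u$ by Lemma \ref{lm2-4-1} and $p^*=H(\psi)-|u^*|^2/2$, and check that $(u,p)$ solves \eqref{eq:euler}.

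\emph{Step 1: helicity and incompressibility.} By construction $u(\bar Q_{h,\theta}x)=Q_\theta u(x)$, and $p$ is helical. Split $u=v+F\vec\xi/|\vec\xi|^2$ with $v$ given by \eqref{2-5}, extended helically.
\begin{itemize}
\item The term $F\vec\xi/|\vec\xi|^2$ is divergence free. Indeed $\nabla\cdot\vec\xi=0$, and $f=F/|\vec\xi|^2$ is helical, so $\nabla\cdot(f\vec\xi)=\vec\xi\cdot\nabla f=0$.
\item For $v$, I would run the computation in the proof of Lemma \ref{lm2-4} backwards. We have $v\cdot\vec\xi=0$ (this fixes $v_3$ as in \eqref{2-6-2}), and $\partial_{x_3}$ of a helical field is expressed through $\vec\xi\cdot\nabla$. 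Hence $\nabla\cdot v$ reduces to $\tfrac1{h^2}$ times a planar divergence of the rotated gradient of $\psi$, which vanishes identically.
\end{itemize}

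\emph{Step 2: the Lamb form.} Since $u\cdot\nabla u=\nabla(|u|^2/2)-u\times\vec w$, it suffices to prove $u\times\vec w=\nabla H(\psi)=H'(\psi)\nabla\psi$, where $\psi$ denotes its helical extension. Both sides are helical vector fields, so it is enough to verify the identity at $x_3=0$. I would test it against three independent vectors: $u$, $\vec\xi$ and $\mathbf e_3$.
\begin{itemize}
\item Against $u$: the left side vanishes trivially. The right side gives $H'\,u\cdot\nabla\psi=H'\nabla^\perp\psi\cdot\nabla\psi=0$ by Lemma \ref{lm2-5}.
\item Against $\vec\xi$: the right side is $H'\vec\xi\cdot\nabla\psi=0$. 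For the left side, $\vec\xi\cdot(u\times\vec w)=\vec w\cdot(\vec\xi\times u)$. Using \eqref{2-1} with $\partial F=F'\partial\psi$, the $\vec\xi$-part of $\vec w$ drops out. The remaining term is $\tfrac{F'}{h}(\nabla^\perp\psi,0)\cdot(\vec\xi\times v)$, and a direct computation with \eqref{2-5} shows it equals $\tfrac{F'}{h}$ times a multiple of $\nabla^\perp\psi\cdot\nabla\psi=0$.
\item Against $\mathbf e_3$: this is exactly \eqref{2-20}. Here $w_3$ is taken from the third entry of \eqref{2-5-2}, with $\mathcal L\psi$ replaced via \eqref{Grad Shaf eqs}. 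The algebra preceding the statement, read in reverse, shows the identity holds.
\end{itemize}

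\emph{Main obstacle.} The step I expect to be most delicate is checking that $u$, $\vec\xi$, $\mathbf e_3$ really span $\mathbb R^3$ wherever the argument is used. They fail to when $u$ lies in the span of $\vec\xi$ and $\mathbf e_3$, e.g.\ at critical points of $\psi$. To avoid this degeneracy I would instead use $\nabla\psi$ (with $\partial_{x_3}\psi$ from \eqref{2-6-2}) in place of $u$ and verify that component by the same brute-force substitution. Since both sides are continuous, it also suffices to check the identity on the open dense set where the three vectors are independent.
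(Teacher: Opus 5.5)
Your forward direction is the derivation the paper gives before the statement. The paper never checks the reconstruction, so that half is your own. Your Step~1 and your tests against $u$ and $\vec{\xi}$ are correct.

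\textbf{The gap is in how you close Step~2.} From \eqref{2-5} one computes $u\times\vec{\xi}=v\times\vec{\xi}=-h\nabla\psi$, where $\psi$ is extended helically so that $\partial_{x_3}\psi=\frac1h(x_1\partial_{x_2}\psi-x_2\partial_{x_1}\psi)$. Hence $\det(u,\vec{\xi},\mathbf{e}_3)=-h\,\partial_{x_3}\psi$. Your three test vectors are therefore independent exactly where the angular derivative of $\psi$ about the axis is nonzero, not merely away from critical points.

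That set need not be dense. Write $K(x)=I-\frac{xx^T}{h^2+|x|^2}$; then the equation \eqref{Grad Shaf eqs} is equivariant under rotations of the $(x_1,x_2)$-plane. So there are solutions $\psi=\psi(\sqrt{x_1^2+x_2^2})$, obtained by solving an ODE. For these, $\partial_{x_3}\psi\equiv0$, the $\mathbf{e}_3$-test reads $0=0$ everywhere, and \eqref{Grad Shaf eqs} is never used.

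Your fallback does not help either. One has $\det(\nabla\psi,\vec{\xi},\mathbf{e}_3)=-(x_1\partial_{x_1}\psi+x_2\partial_{x_2}\psi)$, which vanishes wherever the radial derivative does. In particular it vanishes at every critical point, which are the very points you wanted to avoid.

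\textbf{The repair is to replace $\mathbf{e}_3$, not $u$.} Your first two tests show both sides are orthogonal to $u$ and $\vec{\xi}$, hence parallel to $u\times\vec{\xi}=-h\nabla\psi$. Explicitly, $v=\frac{h}{|\vec{\xi}|^2}\nabla\psi\times\vec{\xi}$. Also, \eqref{2-1} is equivalent to $\vec{\xi}\times\vec{w}=\nabla F=F'\nabla\psi$, so $\vec{w}=\frac{\vec{w}\cdot\vec{\xi}}{|\vec{\xi}|^2}\vec{\xi}+\frac{F'}{|\vec{\xi}|^2}\nabla\psi\times\vec{\xi}$. A two-line computation then gives, at every point,
\[
u\times\vec{w}=\lambda\,\nabla\psi,\qquad
\lambda=\frac{FF'}{|\vec{\xi}|^2}-w_3-\frac{F'\,(x_1\partial_{x_1}\psi+x_2\partial_{x_2}\psi)}{|\vec{\xi}|^2}.
\]
Combining \eqref{Grad Shaf eqs} with the third entry of \eqref{2-5-2} gives precisely $\lambda=H'(\psi)$. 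This yields $u\times\vec{w}=\nabla H(\psi)$ everywhere, critical points included, with no nondegeneracy assumption.

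The same identity also fixes the forward direction you inherit from the paper. Extracting $w_3^*$ from \eqref{2-20} alone amounts to dividing $(\lambda-H')\,\partial_{x_3}\psi=0$ by $\partial_{x_3}\psi$. The full Lamb equation $\lambda\nabla\psi=H'\nabla\psi$ instead yields $\lambda=H'$ wherever $\nabla\psi\neq0$.
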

\subsection{A 2D overdetermined problem}
We prove that if one has a stationary helically symmetric Euler flow on a helical  domain $\varOmega$ which is tangent to the boundary and whose pressure is constant on $\partial\varOmega$, then one can trivially extend it to a stationary helically symmetric Euler flow on the whole space with compact cross-sectional support.

Let us start by recalling the definition of a weak stationary Euler flow.
We say that a pair $(u,p)$ of class  $L^2_{\mathrm{loc}}(\mathbb{R}^3)$ is a weak solution to the stationary Euler equation if
\[
\int_{\mathbb{R}^3} \bigl[ (u\otimes u) \cdot \nabla \eta + p\,\nabla\cdot \eta \bigr] \,dx = 0
\qquad\text{and}\qquad
\int_{\mathbb{R}^3} \mathbf{u} \cdot \nabla \varphi \,dx = 0
\]
for any vector field $\eta\in C^\infty_c(\mathbb{R}^3)$ and any scalar function $\varphi\in C^\infty_c(\mathbb{R}^3)$.
Of course, if $u$ and $p$ are smooth enough, this is equivalent to saying that they satisfy Equations~\eqref{eq:euler} in $\mathbb{R}^3$.

\begin{lemma}\label{lem:general}
	Given a helical domain $\varOmega$ in $\mathbb{R}^3$ whose cross-section $\varOmega_0$ is a bounded domain in $ \mathbb{R}^2 $ with  a $C^2$ boundary, suppose that $\tilde u\in C^1(\varOmega)\cap L^2_{loc}(\varOmega)$ is a helically symmetric solution to the stationary Euler equation in $\varOmega$ with pressure $\tilde p\in C^1(\varOmega)\cap L^1_{loc}(\varOmega)$.
	Assume that $\tilde u\cdot\nu |_{\partial\varOmega}=0$ and $\tilde p|_{\partial\varOmega}=c$, where $c$ is a constant,  and $ \nu $ is a unit normal field on $ \partial\varOmega $.
	Then
	\[
	u(x) :=
	\begin{cases}
	\tilde u(x) & \text{if } x\in\varOmega ,\\
	0 & \text{if } x\notin\varOmega
	\end{cases}
	\]
	is a weak helically symmetric solution of the stationary Euler equation on $\mathbb{R}^3$ with pressure
	\[
	p(x) :=
	\begin{cases}
	\tilde p(x) & \text{if } x\in\varOmega ,\\
	c & \text{if } x\notin\varOmega .
	\end{cases}
	\]
\end{lemma}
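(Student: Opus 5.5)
The plan is to verify the two weak formulations directly, splitting each integral into the part over $\varOmega$ (where $u=p-c=0$ is not assumed, but $u,p$ are $C^1$) and the part over the complement (where $u=0$, $p=c$). Helical symmetry of the extension is immediate, since $\varOmega$ is invariant under $\mathcal{H}_h$ and both pieces of $u$ and $p$ are helical. It also follows directly that $u\in L^2_{loc}(\mathbb{R}^3)$ and $p\in L^1_{loc}$. The one technical point is that $\varOmega$ is unbounded in the $x_3$ direction, so $\partial\varOmega$ is a noncompact $C^2$ surface. However, every test function has compact support, so the divergence theorem is applied only on the bounded Lipschitz set $\varOmega\cap\operatorname{supp}\eta$; more concretely, on $\varOmega\cap\{|x_3|<L\}$ with $L$ large, where the top and bottom faces do not meet $\operatorname{supp}\eta$.

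\textbf{Divergence condition.} For $\varphi\in C^\infty_c(\mathbb{R}^3)$,
\[
\int_{\mathbb{R}^3}u\cdot\nabla\varphi\,dx=\int_\varOmega \tilde u\cdot\nabla\varphi\,dx=-\int_\varOmega(\nabla\cdot\tilde u)\varphi\,dx+\int_{\partial\varOmega}\varphi\,\tilde u\cdot\nu\,dS=0,
\]
using $\nabla\cdot\tilde u=0$ in $\varOmega$ and $\tilde u\cdot\nu=0$ on $\partial\varOmega$.

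\textbf{Momentum equation.} First subtract the constant $c$: since $\int_{\mathbb{R}^3}c\,\nabla\cdot\eta\,dx=0$, it suffices to check the identity with $p-c$, which vanishes outside $\varOmega$. On $\varOmega$, integrating by parts gives
\[
\int_\varOmega\bigl[(\tilde u\otimes\tilde u):\nabla\eta+(\tilde p-c)\nabla\cdot\eta\bigr]dx=-\int_\varOmega\bigl[(\tilde u\cdot\nabla\tilde u)+\nabla\tilde p\bigr]\cdot\eta\,dx+\int_{\partial\varOmega}\bigl[(\tilde u\cdot\eta)(\tilde u\cdot\nu)+(\tilde p-c)\,\eta\cdot\nu\bigr]dS .
\]
Here we used $\nabla\cdot(\tilde u\otimes\tilde u)=\tilde u\cdot\nabla\tilde u$, which holds because $\nabla\cdot\tilde u=0$. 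The volume term vanishes by the Euler equations in $\varOmega$. The boundary term vanishes because $\tilde u\cdot\nu=0$ and $\tilde p=c$ on $\partial\varOmega$.

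\textbf{Main obstacle.} The only delicate point is the regularity needed to justify the boundary integrals. The hypothesis $\tilde u,\tilde p\in C^1(\varOmega)$ is interior only, while the boundary conditions implicitly require $\tilde u$ and $\tilde p$ to extend continuously to $\partial\varOmega$. I would interpret the hypotheses as $\tilde u,\tilde p\in C^1(\overline{\varOmega})$ locally, which is the case in the application of Lemma~\ref{lem:overdet}, where $\psi\in C^{s+1}$ up to the boundary. If only interior regularity is available, I would instead approximate $\varOmega$ from inside by the helical domains $\varOmega_\delta=\{x\in\varOmega:\operatorname{dist}(x,\partial\varOmega)>\delta\}$. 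These are $C^2$ for small $\delta$, since $\partial\varOmega$ is $C^2$ with cross-section $C^2$, hence uniformly $C^2$ by helical invariance. On $\varOmega_\delta$ the computation above holds exactly, and the boundary integrals converge to zero as $\delta\to0$ by continuity of $\tilde u\cdot\nu$ and $\tilde p$ up to the boundary. The volume integrals converge by dominated convergence, using the local integrability assumptions.
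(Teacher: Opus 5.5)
Your proposal is correct and follows the paper's proof essentially step for step. You integrate by parts on $\varOmega$, using $\nabla\cdot\tilde u=0$ and $\tilde u\cdot\nu=0$ for the divergence condition, subtract the constant $c$ via $\int\nabla\cdot\eta\,dx=0$, and then kill the volume term with the Euler equations and the boundary term with $\tilde u\cdot\nu=\tilde p-c=0$. Your extra remarks on the noncompact boundary and on boundary regularity, which the paper leaves implicit, are sound; note only that the $\varOmega_\delta$ approximation needs $\tilde u$ itself, not just $\tilde u\cdot\nu$, to be continuous up to $\partial\varOmega$ so that $\tilde u\cdot\eta$ stays controlled.
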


\begin{proof}
	We start by noticing that, for all $\varphi\in C^\infty_c(\mathbb{R}^3)$,
	\[
	\int_{\mathbb{R}^3} u \cdot \nabla \varphi \,dx
	= \int_{\varOmega} \tilde u \cdot \nabla \varphi \,dx
	= -\int_{\varOmega} \varphi\,\nabla\cdot \tilde u \,dx
	+ \int_{\partial\varOmega} \varphi\,\nu\cdot \tilde u \,dS = 0,
	\]
	where we have used that $\nabla\cdot \tilde u = 0$ in $\varOmega$ and $\nu\cdot \tilde u = 0$ on $\partial\varOmega$.
	Hence $\nabla\cdot u = 0$ in the sense of distributions.
	
	Let us now take an arbitrary vector field $\eta\in C^\infty_c(\mathbb{R}^3)$.
	As $\int_{\mathbb{R}^3}\nabla\cdot \eta\, dx = 0$, we can write
	\begin{align*}
	\int_{\mathbb{R}^3} \bigl[ (u\otimes u) \cdot \nabla \eta + p\,\nabla\cdot \eta \bigr] \,dx
	&= \int_{\varOmega} (u\otimes u) \cdot \nabla \eta \,dx
	+ \int_{\mathbb{R}^3} (p-c)\,\nabla\cdot \eta \,dx \\
	&= \int_{\varOmega} \bigl[ (\tilde u\otimes \tilde u) \cdot \nabla \eta - \eta \cdot \nabla \tilde p \bigr] \,dx
	+ \int_{\partial\varOmega} (\tilde p - c)\, \eta \cdot \nu \,dS \\
	&= -\int_{\varOmega} \bigl[ \nabla\cdot(\tilde u\otimes \tilde u) + \nabla \tilde p \bigr] \cdot \eta \,dx \\
	&\quad + \int_{\partial\varOmega} \bigl[ (\tilde u\cdot \eta)(\tilde u\cdot \nu) + (\tilde p - c)\, \eta \cdot \nu \bigr] \,dS .
	\end{align*}
	The volume integral is zero because $v$ satisfies the stationary Euler equation in $\varOmega$.
	Since $\tilde u\cdot \nu = \tilde p - c = 0$ on $\partial\varOmega$, the surface integral vanishes too.
	It then follows that $u$ is a weak solution of the Euler equation in $\mathbb{R}^3$, as claimed.
\end{proof}
From formula \eqref{2-5-1}, the three-dimensional velocity field reconstructed from $\psi$ 
  satisfies the identity
\begin{equation}\label{eq:kinetic}
|u^*|^2=\frac1{|\vec{\xi}|^2}
\left[(h^2+x_2^2)(\partial_{x_1}\psi)^2+(h^2+x_1^2)(\partial_{x_2}\psi)^2
-2x_1x_2\partial_{x_1}\psi\,\partial_{x_2}\psi+F(\psi)^2\right].
\end{equation}
 The pressure is
\begin{align*} 
p^*&=H(\psi)-\frac12|u^*|^2 \notag\\
&=H(\psi)-\frac1{2|\vec{\xi}|^2}
\left[(h^2+x_2^2)(\partial_{x_1}\psi)^2+(h^2+x_1^2)(\partial_{x_2}\psi)^2
-2x_1x_2\partial_{x_1}\psi\,\partial_{x_2}\psi+F(\psi)^2\right].
\end{align*}
With $u$ given by~\eqref{2-5-1}, and using the notation in Lemma~\ref{lem:general}, the conditions that $u$ be tangential to the helical domain defined by $\varOmega$ and that the pressure~\eqref{eq:pressure} be constant on the boundary amount to saying that $\psi$ takes a constant value $c_0$ on $\partial\varOmega$ and that $$ \frac1{|\vec{\xi}|^2}
\left[(h^2+x_2^2)(\partial_{x_1}\psi)^2+(h^2+x_1^2)(\partial_{x_2}\psi)^2
-2x_1x_2\partial_{x_1}\psi\,\partial_{x_2}\psi+F(\psi)^2\right] $$ is also constant on $\partial\varOmega$.
Setting $c_0:=0$ without loss of generality, Lemma~\ref{lem:general} results in Lemma~\ref{lem:overdet}.


\section{Local coordinates and the scaled elliptic equation}\label{sec:Dirichlet}

\subsection{Anisotropic scaling near a fixed point}

Let us take any $R>0$  that will be fixed during the whole construction and set
\begin{equation*}
\sigma_1=h\sqrt{h^2+R^2},\qquad \sigma_2=h^2+R^2.
\end{equation*}
We introduce suitably translated and rescaled variables 
\begin{equation*}
x_1=R+\varepsilon\sigma_1 x,
\qquad x_2=\varepsilon\sigma_2 y,
\qquad (x,y)\in\mathbb{R}^2,
\end{equation*}
where $\varepsilon>0$ is a small parameter.  The chart used for the main expansion is
\begin{equation}\label{eq:chart}
\partial_{x_1}=\varepsilon^{-1}\sigma_1^{-1}\partial_x,
\qquad \partial_{x_2}=\varepsilon^{-1}\sigma_2^{-1}\partial_y.
\end{equation}

\begin{remark}\label{rem:scale-choice}
	At $(R,0)$ the scaling is anisotropic.  The values of $\sigma_1$ and $\sigma_2$ are chosen so that, after the change of variables and the rescaling $\psi=\varepsilon^2\phi$, the leading operator is the usual Laplacian.  
\end{remark}
With Equation \eqref{eq:chart}, the Grad-Shafranov equation \eqref{Grad Shaf eqs} becomes
\begin{equation}\label{eq:local}
\begin{split}
&\bigl((R+\varepsilon\sigma_1x)^2+(\varepsilon\sigma_2y)^2+h^2\bigr)\bigl(h^2+(\varepsilon\sigma_2y)^2\bigr)
\varepsilon^{-2}\sigma_1^{-2}\partial_{xx}\psi \\
&-2\bigl((R+\varepsilon\sigma_1x)^2+(\varepsilon\sigma_2y)^2+h^2\bigr)(R+\varepsilon\sigma_1x)(\varepsilon\sigma_2y)
\varepsilon^{-2}\sigma_1^{-1}\sigma_2^{-1}\partial_{xy}\psi\\
&+\bigl((R+\varepsilon\sigma_1x)^2+(\varepsilon\sigma_2y)^2+h^2\bigr)\bigl(h^2+(R+\varepsilon\sigma_1x)^2\bigr)
\varepsilon^{-2}\sigma_2^{-2}\partial_{yy}\psi\\
&\quad=\bigl((R+\varepsilon\sigma_1x)^2+(\varepsilon\sigma_2y)^2+3h^2\bigr)\bigl((R+\varepsilon\sigma_1x)\varepsilon^{-1}\sigma_1^{-1}\partial_{x}\psi+y\partial_{y}\psi\bigr)-2h^2F(\psi)\\
&\qquad-\bigl((R+\varepsilon\sigma_1x)^2+(\varepsilon\sigma_2y)^2+h^2\bigr) \frac{\left(F^2\right)'(\psi)}{2}+\bigl((R+\varepsilon\sigma_1x)^2+(\varepsilon\sigma_2y)^2+h^2\bigr)^2 H'(\psi),
\end{split}
\end{equation} 
where $F$ and $H$ are of the form described in Theorem~\ref{thm:main}.
\subsection{The rescaled unknown and solvability of a Dirichlet problem}
Set
\begin{equation*}
\psi=\varepsilon^2\phi.
\end{equation*}
 Equation   \eqref{eq:local}  can be written in terms of $\phi$ as
\begin{equation}\label{eq:phi-0}
\begin{split}
&\bigl((R+\varepsilon\sigma_1x)^2+(\varepsilon\sigma_2y)^2+h^2\bigr)\bigl(h^2+(\varepsilon\sigma_2y)^2\bigr)
 \sigma_1^{-2}\partial_{xx}\phi \\
&-2\bigl((R+\varepsilon\sigma_1x)^2+(\varepsilon\sigma_2y)^2+h^2\bigr)(R+\varepsilon\sigma_1x)(\varepsilon\sigma_2y)
 \sigma_1^{-1}\sigma_2^{-1}\partial_{xy}\phi\\
&+\bigl((R+\varepsilon\sigma_1x)^2+(\varepsilon\sigma_2y)^2+h^2\bigr)\bigl(h^2+(R+\varepsilon\sigma_1x)^2\bigr)
 \sigma_2^{-2}\partial_{yy}\phi\\
&\quad=\varepsilon\bigl((R+\varepsilon\sigma_1x)^2+(\varepsilon\sigma_2y)^2+3h^2\bigr)\bigl((R+\varepsilon\sigma_1x) \sigma_1^{-1}\partial_{x}\psi+\varepsilon y\partial_{y}\psi\bigr)-2h^2F(\varepsilon^2\phi)\\
&\qquad-\bigl((R+\varepsilon\sigma_1x)^2+(\varepsilon\sigma_2y)^2+h^2\bigr) \frac{\left(F^2\right)'(\varepsilon^2\phi)}{2}+\bigl((R+\varepsilon\sigma_1x)^2+(\varepsilon\sigma_2y)^2+h^2\bigr)^2 H'(\varepsilon^2\phi).
\end{split}
\end{equation} 
We look for solutions to Equation~\eqref{eq:phi-0} in domains of the form
\[
\varOmega_{\varepsilon, B} := \{ (\rho,\theta)\mid\rho < 1 + \varepsilon B(\theta) \}
\]
for some function $B\in C^{s+1}(\mathbb{T})$. Here $ (\rho,\theta) $ is the planar polar coordinates,   $ x=\rho\cos\theta, y=\rho\sin\theta $.  

 Equation~\eqref{eq:phi-0} can be rewritten  as
\begin{equation}\label{eq:phi}
\begin{split}
\Delta\phi={}&\sigma_2^2c+\varepsilon\Bigl(-2R\sigma_1^{-1}xh^2\partial_{xx}\phi+2  R\sigma_1^{-1}\sigma_2y\partial_{xy}\phi-4R\sigma_1\sigma_2^{-1}x\partial_{yy}\phi\\
&
+(R^2+3h^2)R\sigma_1^{-1}\partial_x\phi 
+4R \sigma_1\sigma_2xc-2h^2\sqrt{F_R}\Bigr)
+\mathcal{R}(\varepsilon,\phi),
\end{split}
\end{equation}
where we have defined the positive constant 
$$c:=H'(0),$$
and where the remainder is
\begin{equation}\label{eq:R-structural}
\begin{split}
\mathcal{R}(\varepsilon,\phi):=&\left [-\sigma_1^{-2}\varepsilon^2(\sigma_1^2x^2+\sigma_2^2y^2)(h^2+\varepsilon^2\sigma_2^2y^2)-\sigma_1^{-2}\varepsilon^2\sigma_2^2y^2(\sigma_2+2R\varepsilon\sigma_1x)\right ]\partial_{xx}\phi \\
&+2\varepsilon\sigma_1^{-1}y\left[ (2R\varepsilon\sigma_1x+\varepsilon^2\sigma_1^2x^2+\varepsilon^2\sigma_2^2y^2)(R+\varepsilon\sigma_1x)+\varepsilon\sigma_1\sigma_2x\right] \partial_{xy}\phi  \\
&+\left [-\varepsilon^2(\sigma_1^2x^2+\sigma_2^2y^2)\left(h^2+(R+\varepsilon\sigma_1x)^2\right)-\varepsilon^2\sigma_1^2x^2(\sigma_2+2R\varepsilon\sigma_1x)-4R^2\varepsilon^2\sigma_1^{2}x^2\right ]\sigma_2^{-2}\partial_{yy}\phi \\
&+\varepsilon\left[ (2R\varepsilon\sigma_1x+\varepsilon^2\sigma_1^2x^2+\varepsilon^2\sigma_2^2y^2)
\left ((R+\varepsilon\sigma_1x)\sigma_1^{-1}\partial_x\phi+\varepsilon y\partial_y\phi\right )+(R^2+3h^2)\varepsilon(x\partial_x\phi+y\partial_y\phi)\right]  \\
&-2h^2\bigl(F(\varepsilon^2\phi)-F(0)\bigr)
-\bigl((R+\varepsilon\sigma_1x)^2+\varepsilon^2\sigma_2^2y^2+h^2\bigr) \frac{\widehat{F}'(\varepsilon^2\phi)}{2}  \\
&+\bigl((R+\varepsilon\sigma_1x)^2+\varepsilon^2\sigma_2^2y^2+h^2\bigr)^2\bigl(H'(\varepsilon^2\phi)-c\bigr)\\
&+\left [2(\varepsilon^2\sigma_1^2x^2+\varepsilon^2\sigma_2^2y^2)(\sigma_2+2R\varepsilon\sigma_1x)+(\varepsilon^2\sigma_1^2x^2+\varepsilon^2\sigma_2^2y^2)^2+4R^2\varepsilon^2\sigma_1^2x^2\right ]c.
\end{split}
\end{equation}
The remainder satisfies
\begin{equation*}
\sup_{||\phi||_{C^{s+1}(\Omega)}\le C}
||\mathcal{R}(\varepsilon,\phi)||_{C^{s-1}(\Omega)}\le C\varepsilon^2.
\end{equation*}
The Dirichlet boundary condition $\psi=0$ on $\partial\varOmega_{\varepsilon, B}$ can then be rewritten in terms of $\phi(\rho,\theta)$ as
\begin{equation}\label{eq:Dirichlet_varphi}
\phi(1+\varepsilon B(\theta),\theta) = 0 .
\end{equation}
We will say that a function $f(\rho,\theta)$ is \emph{even} if
\[
f(\rho,\theta) = f(\rho,-\theta) ,
\]
and similarly for a function $g(\theta)$.

Note that the function
\[
\phi_0:=A_0(\rho^2-1),
\qquad
A_0:=\frac{\sigma_2^2c}{4} 
\]
satisfies Equation~\eqref{eq:phi} and the Dirichlet condition~\eqref{eq:Dirichlet_varphi} when $\varepsilon=0$. It is straightforward to show that there are solutions to the problem for small $\varepsilon$ using the implicit function theorem in Banach spaces.

\begin{proposition}\label{prop:Dirichlet}
	For every $B$ with $||B||_{C^{s+1}}\le1$ and for all sufficiently small $\varepsilon $, there is a unique solution
	\begin{equation*}
	\phi=\phi_{\varepsilon,B}\simeq \phi_0\quad\hbox{in }C^{s+1}(\Omega_{\varepsilon,B}) 
	\end{equation*}
	that satisfies Equation~\eqref{eq:phi} and the Dirichlet boundary condition~\eqref{eq:Dirichlet_varphi}.
	Furthermore, $\phi_{\varepsilon,B}<0$ in $\varOmega_{\varepsilon, B}$ and $\phi_{\varepsilon,B}$ is even if $B$ is.
\end{proposition}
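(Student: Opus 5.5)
The plan is to fix the domain first and then apply the implicit function theorem in $\varepsilon$, after checking that the linearized operator at $\varepsilon=0$ is the invertible Dirichlet Laplacian.

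\emph{Step 1: pull back to the unit disk.} I will use a diffeomorphism $\Phi_{\varepsilon,B}:\overline{\mathbb{D}}\to\overline{\varOmega_{\varepsilon,B}}$ given in polar coordinates by
\[
(r,\theta)\mapsto\bigl(r(1+\varepsilon\chi(r)B(\theta)),\theta\bigr),
\]
where $\chi$ is a smooth cutoff equal to $1$ near $r=1$ and $0$ near $r=0$. This keeps the map smooth at the origin. The map is $C^{s+1}$, and it is $O(\varepsilon)$-close to the identity in $C^{s+1}$, uniformly for $\|B\|_{C^{s+1}}\le1$.

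Setting $\tilde\phi:=\phi\circ\Phi_{\varepsilon,B}$, equation \eqref{eq:phi} becomes $\Delta\tilde\phi=\sigma_2^2c+\mathcal{N}(\varepsilon,B,\tilde\phi)$ in $\mathbb{D}$, with $\tilde\phi=0$ on $\partial\mathbb{D}$. Here $\mathcal{N}$ collects three contributions: the $O(\varepsilon)$ metric distortion of the Laplacian under $\Phi_{\varepsilon,B}$, the explicit $O(\varepsilon)$ terms in \eqref{eq:phi}, and $\mathcal{R}$.

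\emph{Step 2: the map for the implicit function theorem.} I will work on
\[
X:=\{\tilde\phi\in C^{s+1}(\overline{\mathbb{D}}):\tilde\phi|_{\partial\mathbb{D}}=0\}
\]
and define $\mathcal{G}:(-\varepsilon_0,\varepsilon_0)\times\{\|B\|_{C^{s+1}}\le1\}\times X\to C^{s-1}(\overline{\mathbb{D}})$ by $\mathcal{G}=\Delta\tilde\phi-\sigma_2^2c-\mathcal{N}$.

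\emph{Step 3: regularity and the expansion of $\mathcal{G}$.} The terms coming from $F$ and $H$ are the delicate ones. Since $\varepsilon^2F_R+\widehat F(\varepsilon^2\phi)\ge\varepsilon^2F_R/2>0$ for bounded $\phi$ and small $\varepsilon$, the square root is smooth there. One writes
\[
F(\varepsilon^2\phi)=\varepsilon\bigl(F_R+\varepsilon^{-2}\widehat F(\varepsilon^2\phi)\bigr)^{1/2},
\]
and uses $\widehat F(0)=\widehat F'(0)=0$ to get $\varepsilon^{-2}\widehat F(\varepsilon^2\phi)=O(\varepsilon^2)$. This yields the $-2h^2\varepsilon\sqrt{F_R}$ term plus $O(\varepsilon^2)$. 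The term $\tfrac12(F^2)'=\tfrac12\widehat F'(\varepsilon^2\phi)$ is $O(\varepsilon^2)$ in $C^{s-1}$, because $\widehat F\in C^s$ with $\widehat F'(0)=0$. The term $H'(\varepsilon^2\phi)-c$ is also $O(\varepsilon^2)$.

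Because $s$ is non-integer and the composition $\widehat F'(\varepsilon^2\phi)$ involves a $C^{s-1}$ function, $\mathcal{G}$ is only continuous in $(\varepsilon,B,\tilde\phi)$. It is, however, $C^1$ in $\tilde\phi$, possibly after shrinking the target regularity slightly or using that $\widehat F',H'\in C^{s-1}$ with $s-1>1$. I expect this, namely the composition of $C^{s-1}$ nonlinearities with the small parameter $\varepsilon^2$ in front, to be the main technical point.

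To handle it I will use the version of the implicit function theorem that needs only continuity in the parameters and $C^1$ in the unknown with continuous derivative. If that fails, I will instead run a contraction mapping argument: $\tilde\phi=\Delta^{-1}_D(\sigma_2^2c+\mathcal{N})$ is a contraction on a $C^{s+1}$ ball around $\phi_0$ of radius $C\varepsilon$, using the Schauder estimate $\|\Delta_D^{-1}f\|_{C^{s+1}}\le C\|f\|_{C^{s-1}}$. The estimates needed are that $\mathcal{N}$ is $O(\varepsilon)$ and Lipschitz in $\tilde\phi$ with constant $O(\varepsilon)$, both of which follow from the structure of \eqref{eq:phi} and \eqref{eq:R-structural}.

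\emph{Step 4: existence and uniqueness.} At $\varepsilon=0$ we have $\mathcal{G}(0,B,\phi_0)=0$, and $D_{\tilde\phi}\mathcal{G}(0,B,\phi_0)=\Delta:X\to C^{s-1}$ is an isomorphism by Schauder theory. So for small $\varepsilon$ there is a unique solution near $\phi_0$, and $\phi_{\varepsilon,B}:=\tilde\phi\circ\Phi_{\varepsilon,B}^{-1}$ satisfies $\|\phi_{\varepsilon,B}-\phi_0\|_{C^{s+1}}\le C\varepsilon$ uniformly in $B$.

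\emph{Step 5: sign.} Negativity comes from closeness to $\phi_0$. In the interior, away from the boundary, $\phi_0\le -A_0\delta$, so the perturbation of size $C\varepsilon$ does not change the sign. Near $\partial\varOmega_{\varepsilon,B}$, the function vanishes on the boundary and its outward normal derivative is close to $2A_0>0$, so it is negative just inside.

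\emph{Step 6: parity.} Under the reflection $y\mapsto-y$, the operator in \eqref{eq:phi-0} is invariant: the coefficients of $\partial_{xx},\partial_{yy},\partial_x$ are even in $y$, while the $\partial_{xy}$ coefficient and the $y\partial_y$ term transform compatibly. If $B$ is even, the domain is symmetric as well, so $\phi(x,-y)$ is also a solution near $\phi_0$, and uniqueness gives evenness.
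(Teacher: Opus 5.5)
Your proof follows the paper's proof. You use the same pull-back $\rho\mapsto(1+\varepsilon\chi(\rho)B(\theta))\rho$ to the unit disk and the implicit function theorem at $(0,\phi_0)$, with linearization the Dirichlet Laplacian $C^{s+1}_{\mathrm{Dir}}(\mathbb{D})\to C^{s-1}(\mathbb{D})$. Evenness again comes from uniqueness plus the $y\mapsto -y$ invariance of \eqref{eq:phi-0}.

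The only real difference is the sign argument. The paper gets $\phi_{\varepsilon,B}<0$ from $\Delta\phi_{\varepsilon,B}=\sigma_2^2c+O(\varepsilon)>0$, $\phi_{\varepsilon,B}=0$ on $\partial\varOmega_{\varepsilon,B}$, and the maximum principle. Your interior/collar argument is also correct. It does, however, need quantitative $C^1$-closeness to $\phi_0$ on a collar of $\varepsilon$-independent width, so the maximum principle is the cheaper route.

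Your Step 3 needs correcting, although the paper passes over the same point: it simply asserts that $\mathfrak H$ is $C^1$. For general $H,\widehat F\in C^s$, the map $\tilde\phi\mapsto H'(\varepsilon^2\phi)$ is neither $C^1$ nor Lipschitz from $C^{s+1}_{\mathrm{Dir}}$ into $C^{s-1}$. Its would-be derivative $\Phi\mapsto\varepsilon^2H''(\varepsilon^2\phi)\Phi$ only lies in $C^{s-2}$. Differences $H'(\varepsilon^2\phi_1)-H'(\varepsilon^2\phi_2)$ are controlled in $C^{s-1}$ only through the H\"older modulus of $H''$.

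So neither of your two remedies works as written. Lowering only the target space is no help either, because it destroys the Schauder isomorphism.

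The fix is to drop one derivative on both sides:
\begin{itemize}
\item Write $s=2+\alpha$ and run your contraction in a ball of $C^{2,\alpha}_{\mathrm{Dir}}(\mathbb{D})$ with target $C^{0,\alpha}(\mathbb{D})$. There $H'(\varepsilon^2\phi_1)-H'(\varepsilon^2\phi_2)=\varepsilon^2(\phi_1-\phi_2)\int_0^1H''\bigl(\varepsilon^2(t\phi_1+(1-t)\phi_2)\bigr)\,dt$ has $C^{0,\alpha}$ norm at most $C\varepsilon^2\|\phi_1-\phi_2\|_{C^{0,\alpha}}$. The same holds for the $F$ terms, and all remaining terms are linear with $O(\varepsilon)$ coefficients.
\item Schauder regularity then upgrades the fixed point to $C^{s+1}$ with $\|\bar\phi-\phi_0\|_{C^{s+1}}\le C\varepsilon$. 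The coefficients are in $C^{s-1}$, and the right-hand side is in $C^{s-1}$ once $\phi\in C^{1,\alpha}$.
\item Uniqueness in the $C^{2,\alpha}$-ball gives uniqueness in the $C^{s+1}$-ball, so your parity argument goes through unchanged.
\end{itemize}
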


\begin{proof}
	Let $ \mathbb{D}$ be the unit disk. Define a map $ \Xi_{\varepsilon,B}:\mathbb{D}\mapsto \varOmega_{\varepsilon, B} $
	\begin{equation*}
	\Xi_{\varepsilon,B}(\rho,\theta):=((1+\varepsilon\chi(\rho)B(\theta))\rho,\theta),
	\end{equation*}
	where $\chi$ is a smooth cut-off function and $\chi=0$ near $\rho=0$ and $\chi=1$ near $\rho=1$.  If $\varepsilon $ is small, this map is a $C^{s+1}$ diffeomorphism and $ \Xi_{\varepsilon,B}^{-1}  $ pulls the domain $\varOmega_{\varepsilon, B}$ back to the unit disk.  

Then one can define a map
\[
	\mathfrak H : (-\varepsilon_0,\varepsilon_0) \times C^{s+1}_{\text{Dir}}(\mathbb{D}) \mapsto C^{s-1}(\mathbb{D})
\]
as
\begin{equation*} 
\begin{split}
	\mathfrak H(\varepsilon,\bar\phi) :=&
\Bigl[ \Delta\bigl( \bar\phi \circ \Xi_{\varepsilon,B}^{-1} \bigr)
- \sigma_2^2c-\varepsilon\Bigl(-2R\sigma_1^{-1}xh^2\partial_{xx}(\bar\phi \circ \Xi_{\varepsilon,B}^{-1})+2  R\sigma_1^{-1}\sigma_2y\partial_{xy}(\bar\phi \circ \Xi_{\varepsilon,B}^{-1})\\
&-4R\sigma_1\sigma_2^{-1}x\partial_{yy}(\bar\phi \circ \Xi_{\varepsilon,B}^{-1})+(R^2+3h^2)R\sigma_1^{-1}\partial_x(\bar\phi \circ \Xi_{\varepsilon,B}^{-1})\\
&
+4R \sigma_1\sigma_2xc-2h^2\sqrt{F_R}\Bigr)
-\mathcal{R}(\varepsilon,\bar\phi \circ \Xi_{\varepsilon,B}^{-1}) \Bigr] \circ \Xi_{\varepsilon B} .
\end{split}
\end{equation*}
Here
\[
C^{s+1}_{\mathrm{Dir}}(\mathbb{D}) :=
\bigl\{ \bar\phi \in C^{s+1}(\mathbb{D}) : \bar\phi|_{\partial \mathbb{D}}=0 \bigr\} .
\]
Notice that the map $\mathfrak H$ is $C^1$.  This follows from the formula for the coefficients, the regularity of $F$ and $H$, and the fact that products and compositions are continuous in H\"older spaces when $s>2$. $ \Xi_{\varepsilon,B}  $ maps the boundary $\partial\mathbb{D} $   to $\partial\varOmega_{\varepsilon, B}$, so the zero Dirichlet condition $ \bar\phi\circ\Xi_{\varepsilon,B}^{-1}|_{\partial\varOmega_{\varepsilon, B}}=0 $ becomes the fixed condition $\bar\phi|_{\partial \mathbb{D} }=0$. 
It is obvious that if $\phi$ solves $\mathfrak H(\varepsilon,\bar\phi)=0$, then $\phi$ is a solution to the Dirichlet problem~\eqref{eq:phi} and \eqref{eq:Dirichlet_varphi}.

Notice that $\mathfrak H(0,\phi_0)=0$ and the Fr\'echet derivative
\[
D_{\bar\phi} \mathfrak H(0,\phi_0) = \Delta
\]
is an invertible mapping $C^{s+1}_{\mathrm{Dir}}(\mathbb{D}) \to C^{s-1}(\mathbb{D})$ by the Schauder estimate. The implicit function theorem then gives a unique solution $\bar\phi_{\varepsilon,B}$ near $\phi_0$ on the fixed disk for any small enough $\varepsilon$ and $B$.  Pushing it forward by $\Xi_{\varepsilon,B}$ gives $\phi_{\varepsilon,B}$ on $\varOmega_{\varepsilon,B}$.   
As $B$ only appears in the problem through the product $\varepsilon B$, this is equivalent to the first part of the statement.
Also, this uniqueness property immediately implies that $\phi_{\varepsilon,B} := \bar\phi_{\varepsilon,B} \circ \Xi_{\varepsilon,B}^{-1}$ is even when $B$ is.
The property that $\phi_{\varepsilon,B}<0$ in $\varOmega_{\varepsilon,B}$ follows from Equation~\eqref{eq:phi}, i.e.,
\[
\Delta\phi_{\varepsilon,B} = \sigma_2^2c+  O(\varepsilon) > 0 ,
\]
via the maximum principle.	
	
\end{proof}

\section{Expansion of the Dirichlet solution with respect to $\varepsilon$}\label{sec:phi-expansion}

In the next proposition we compute an asymptotic expansion for the function $\phi_{\varepsilon,B}$ for small $\varepsilon$.
The constants $A_0$ and $A_1$ appearing in this expansion, which depend on $R$ but not on $\varepsilon$, will play a major role in the rest of the paper.

Throughout, we will denote by $ P_{\varepsilon B}$ the Poisson integral operator of the domain $\varOmega_{\varepsilon, B}$ in the coordinates $(\rho,\theta)$.
That is, $v(\rho,\theta) := P_{\varepsilon B} f$ denotes the only harmonic function in $\varOmega_{\varepsilon, B}$ satisfying the boundary condition
\[
v(1+\varepsilon B(\theta),\theta) = f(\theta) .
\]
Note that $P_{\varepsilon B}$ only depends on the product $\varepsilon B$, not on $\varepsilon$ and $B$ separately.
It is standard that $P_{\varepsilon B}$ defines a map $C^{s+1}(\mathbb{T}) \to C^{s+1}(\varOmega_{\varepsilon, B})$.
A convenient explicit Fourier formula for the Poisson operator in the case of the disk is
\[
P_0 f(\rho,\theta) := \sum_{n\in\mathbb{Z}} f_n \,\rho^{|n|}\, e^{in\theta}
\qquad\text{if}\quad f(\theta) = \sum_{n\in\mathbb{Z}} f_n e^{in\theta} .
\]
The first-order expansion of $ \phi_{\varepsilon,B} $ used below is
 \begin{proposition}\label{prop:first-order-dirichlet-expansion}
 	For $B$ in a bounded set of $C^{s+1}(\mathbb{T})$, the solution of Proposition~\ref{prop:Dirichlet} has the expansion
\begin{equation}\label{eq:phi-expansion}
\phi_{\varepsilon,B}=A_0(\rho^2-1)+\varepsilon\left[A_1(\rho^3-\rho)\cos\theta-\frac{h^2\sqrt{F_R}}{2}(\rho^2-1)-2A_0P_{\varepsilon B}B\right]+O(\varepsilon^2),
\end{equation}
where  $A_0=\frac{\sigma_2^2c}{4}$ and
\begin{align*}\label{eq:A1}
A_1:=\frac18\Big(&-4R\sigma_1^{-1}A_0h^2
-8R\sigma_1\sigma_2^{-1}A_0
+2(R^2+3h^2)R\sigma_1^{-1}A_0 +4R \sigma_1\sigma_2c\Big).
\end{align*} 	
Moreover, its first order derivatives are given by  
\begin{equation}\label{eq:px}
\partial_x\phi_{\varepsilon,B}={} 2A_0x+\varepsilon \left[  A_1(3x^2+y^2-1)-  h^2\sqrt{F_R}x  
-2 A_0\partial_x(P_{\varepsilon B}B)\right] 
+O(\varepsilon^2),\tag{4.1a}
\end{equation}
\begin{equation}\label{eq:py}
\partial_y\phi_{\varepsilon,B}={} 2A_0y+\varepsilon \left [2A_1xy- h^2\sqrt{F_R}y  
-2 A_0\partial_y(P_{\varepsilon B}B)\right ]
+O(\varepsilon^2),\tag{4.1b}
\end{equation}
\begin{equation}\label{eq:prho}
\partial_{\rho}\phi_{\varepsilon,B}={} 2A_0\rho+\varepsilon \left[  A_1(3\rho^2-1)\cos\theta-  h^2\sqrt{F_R}\rho  
-2 A_0\partial_{\rho}(P_{\varepsilon B}B)\right] 
+O(\varepsilon^2),\tag{4.1c}
\end{equation}
and
\begin{equation}\label{eq:Dphi-square}
|\nabla\phi_{\varepsilon,B}|^2=4A_0^2\rho^2+4\varepsilon A_0 
\left[A_1(3\rho^3-\rho)\cos\theta-h^2\sqrt{F_R}\rho^2-2A_0\rho\partial_\rho(P_{\varepsilon B}B)\right]+O(\varepsilon^2).
\tag{4.1d}
\end{equation}
The error terms are uniform in $B$ on bounded sets.
 \end{proposition}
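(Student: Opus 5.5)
The plan is to guess the first-order corrector explicitly, check that it solves the Dirichlet problem \eqref{eq:phi}--\eqref{eq:Dirichlet_varphi} up to an $O(\varepsilon^2)$ residual, and then use the uniform invertibility of the linearization from the proof of Proposition~\ref{prop:Dirichlet} to show that the true solution $\phi_{\varepsilon,B}$ differs from the ansatz by $O(\varepsilon^2)$ in $C^{s+1}$.

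\textbf{Step 1: identifying the corrector.} Write $\phi_{\varepsilon,B}=\phi_0+\varepsilon\phi_1+O(\varepsilon^2)$ and insert this into \eqref{eq:phi}. Since $\mathcal R(\varepsilon,\phi)=O(\varepsilon^2)$, at order $\varepsilon$ we need
\[
\Delta\phi_1=L_1(\phi_0),
\]
where $L_1$ is the bracket multiplying $\varepsilon$ in \eqref{eq:phi}. For $\phi_0=A_0(x^2+y^2-1)$ we have $\partial_{xx}\phi_0=\partial_{yy}\phi_0=2A_0$, $\partial_{xy}\phi_0=0$ and $\partial_x\phi_0=2A_0x$. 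Substituting these gives
\[
L_1(\phi_0)=8A_1x-2h^2\sqrt{F_R},
\]
and this identity is exactly the definition of $A_1$. The following facts then produce the particular solution:
\begin{itemize}
\item $\Delta(\rho^3\cos\theta)=8\rho\cos\theta=8x$;
\item $\rho\cos\theta$ is harmonic;
\item $\Delta(\rho^2-1)=4$.
\end{itemize}
Hence $A_1(\rho^3-\rho)\cos\theta-\tfrac{h^2\sqrt{F_R}}{2}(\rho^2-1)$ solves the order-$\varepsilon$ equation, and both terms vanish on $\rho=1$.

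For the boundary condition, Taylor expansion gives
\[
\phi_0(1+\varepsilon B,\theta)=2A_0\varepsilon B+O(\varepsilon^2),
\]
and the two polynomial correctors are $O(\varepsilon^2)$ at $\rho=1+\varepsilon B$ because they vanish at $\rho=1$. To cancel the $2A_0\varepsilon B$ term we add the harmonic function $-2A_0\varepsilon P_{\varepsilon B}B$. It takes the value exactly $-2A_0\varepsilon B$ on $\partial\varOmega_{\varepsilon,B}$, so using $P_{\varepsilon B}$ rather than $P_0$ costs nothing. Call the resulting ansatz $\Phi_\varepsilon$, the right-hand side of \eqref{eq:phi-expansion} without the error term.

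\textbf{Step 2: residual and error estimate.} I would verify two bounds:
\begin{itemize}
\item $\Phi_\varepsilon$ satisfies \eqref{eq:phi} with a residual that is $O(\varepsilon^2)$ in $C^{s-1}(\varOmega_{\varepsilon,B})$;
\item $\Phi_\varepsilon$ has boundary values that are $O(\varepsilon^2)$ in $C^{s+1}(\mathbb T)$.
\end{itemize}
The first bound uses the following inputs:
\begin{itemize}
\item the uniform bound on $\mathcal R$;
\item $L_1(\varepsilon\phi_1)=O(\varepsilon)$, so it enters the equation only at order $\varepsilon^2$;
\item $\|P_{\varepsilon B}B\|_{C^{s+1}}\le C$ uniformly for $B$ in bounded sets, which is standard after pulling back to the disk.
\end{itemize}
Next, pull back by $\Xi_{\varepsilon,B}$ and subtract off a $C^{s+1}$ extension of the $O(\varepsilon^2)$ boundary error, so that $\bar\Phi_\varepsilon$ lies in $C^{s+1}_{\mathrm{Dir}}(\mathbb D)$. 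Then $\|\mathfrak H(\varepsilon,\bar\Phi_\varepsilon)\|_{C^{s-1}}\le C\varepsilon^2$. The map $\mathfrak H$ is $C^1$, and $D_{\bar\phi}\mathfrak H$ is uniformly invertible in a neighbourhood of $(0,\phi_0)$, with bounds uniform in $B$ on bounded sets. The quantitative inverse function theorem therefore gives
\[
\|\bar\phi_{\varepsilon,B}-\bar\Phi_\varepsilon\|_{C^{s+1}(\mathbb D)}\le C\varepsilon^2 .
\]
Pushing forward by $\Xi_{\varepsilon,B}$, which is a uniformly bounded $C^{s+1}$ diffeomorphism, yields \eqref{eq:phi-expansion} in $C^{s+1}(\varOmega_{\varepsilon,B})$.

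\textbf{Step 3: the derivative formulas.} Because the error is controlled in $C^{s+1}$ and $s>2$, differentiating \eqref{eq:phi-expansion} term by term gives (4.1a)--(4.1c) with $O(\varepsilon^2)$ errors. For instance,
\[
\partial_x\bigl[(\rho^3-\rho)\cos\theta\bigr]=\partial_x\bigl[(x^2+y^2)x-x\bigr]=3x^2+y^2-1 .
\]
For (4.1d), expand $|\nabla\phi|^2$ to first order. The cross term in the $A_1$ contributions is
\[
2A_0x\cdot A_1(3x^2+y^2-1)+2A_0y\cdot 2A_1xy=2A_0A_1(3\rho^3-\rho)\cos\theta .
\]
The remaining cross terms combine as $x\partial_x+y\partial_y=\rho\partial_\rho$, which gives the stated $4\varepsilon A_0[\cdots]$.

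\textbf{Main obstacle.} The delicate point is uniformity. The domain $\varOmega_{\varepsilon,B}$ moves with $\varepsilon$, and $P_{\varepsilon B}$ is itself $\varepsilon$-dependent, so all the bounds in Step 2 should be done on the fixed disk after pulling back, with constants uniform for $B$ in bounded subsets of $C^{s+1}(\mathbb T)$. The square-root nonlinearity is harmless here. Since $\widehat F(0)=\widehat F'(0)=0$, we have $\widehat F(\varepsilon^2\phi)=O(\varepsilon^4)$ and hence $F(\varepsilon^2\phi)=\varepsilon\sqrt{F_R}+O(\varepsilon^3)$, which is already accounted for in $\mathcal R$.
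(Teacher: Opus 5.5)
Your proposal is correct and follows essentially the paper's route. Both arguments read off $\Delta\phi_1=8A_1x-2h^2\sqrt{F_R}$ from the order-$\varepsilon$ bracket, absorb the boundary mismatch $2A_0\varepsilon B$ with $-2A_0\varepsilon P_{\varepsilon B}B$, and then differentiate the expansion. The differences are minor: you take the particular solution $A_1(\rho^3-\rho)\cos\theta$, which already vanishes on $\rho=1$, where the paper uses $\tfrac43A_1x^3$ and then corrects with $P_0(\cos^3\theta)$; and you bound the $O(\varepsilon^2)$ error by an approximate-solution plus uniform-invertibility argument on the fixed disk, where the paper applies a direct Schauder bound to the equation for $(\phi_{\varepsilon,B}-\phi_0)/\varepsilon$.
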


\begin{proof}
Notice that $\partial_{xx}\phi_0=\partial_{yy}\phi_0=2A_0$, $\partial_{xy}\phi_0=0$, $\partial_x\phi_0=2A_0x$, and the equation for $\phi = \phi_{\varepsilon,B}$ is of the form
\begin{equation*} 
\begin{split}
\Delta\phi_{\varepsilon,B}={}&\sigma_2^2c+\varepsilon\Bigl(-2R\sigma_1^{-1}xh^2\partial_{xx}\phi_{\varepsilon,B}+2  R\sigma_1^{-1}\sigma_2y\partial_{xy}\phi_{\varepsilon,B}-4R\sigma_1\sigma_2^{-1}x\partial_{yy}\phi_{\varepsilon,B}\\
&
+(R^2+3h^2)R\sigma_1^{-1}\partial_x\phi_{\varepsilon,B} 
+4R \sigma_1\sigma_2xc-2h^2\sqrt{F_R}\Bigr)
+O(\varepsilon^2),
\end{split}
\end{equation*}
with the boundary condition  $ \phi_{\varepsilon,B}(1+\varepsilon B,\theta)=0 $. One can then set $\phi_1 := (\phi_{\varepsilon,B}-\phi_0)/\varepsilon$ and arrive at the equation
\[
\Delta\phi_1 = 8A_1 x -2h^2\sqrt{F_R}+ O(\varepsilon) , \qquad
\phi_1(1+\varepsilon B(\theta),\theta) = -2A_0 B(\theta) + O(\varepsilon) .
\]
	 A short computation then shows that $h := \phi_1 - \frac43 A_1 x^3+\frac{h^2\sqrt{F_R}}{2}(\rho^2-1)$ satisfies
	\[
	\Delta h = O(\varepsilon) , \qquad
	h(1+\varepsilon B(\theta),\theta) = -2A_0 B(\theta) - \frac43 A_1 \cos^3\theta + O(\varepsilon) .
	\]
	Hence
	\begin{align*}
	h &= -2A_0 P_{\varepsilon B} B - \frac43 A_1 P_{\varepsilon B}(\cos^3\theta) + O(\varepsilon) \\
	&= -2A_0 P_{\varepsilon B} B - \frac43 A_1 P_0(\cos^3\theta) + O(\varepsilon) .
	\end{align*}
	As $\cos^3\theta = \frac14\cos 3\theta + \frac34\cos\theta$, we then have
	\begin{align*}
	\phi_1 &= -2A_0 P_{\varepsilon B} B
	+ \frac{4A_1}{3} \bigl[ \rho^3\cos^3\theta - P_0(\cos^3\theta) \bigr]-\frac{h^2\sqrt{F_R}}{2}(\rho^2-1) + O(\varepsilon) \\
	&= -2A_0 P_{\varepsilon B} B
	+ \frac{A_1}{3}\Bigl[ \rho^3(\cos 3\theta+3\cos\theta)
	- \bigl( \rho^3\cos 3\theta + 3\rho\cos\theta \bigr) \Bigr]-\frac{h^2\sqrt{F_R}}{2}(\rho^2-1) + O(\varepsilon) \\
	&= -2A_0 P_{\varepsilon B} B + A_1(\rho^3-\rho)\cos\theta -\frac{h^2\sqrt{F_R}}{2}(\rho^2-1)+ O(\varepsilon) .
	\end{align*}
	This is the desired expression for $\phi$. 
The derivative formulas are obtained by differentiating the expansion.  For example,
	\[
	\partial_x\bigl((\rho^3-\rho)\cos\theta\bigr)=3x^2+y^2-1,
	\qquad
	\partial_y\bigl((\rho^3-\rho)\cos\theta\bigr)=2xy.
	\]
	Also $\partial_x(\rho^2-1)=2x$ and $\partial_y(\rho^2-1)=2y$.  These identities give \eqref{eq:px}, \eqref{eq:py} and \eqref{eq:prho}.  The formula for $ \nabla\phi_{\varepsilon,B} $ and $|\nabla\phi_{\varepsilon,B}|^2$ follows  from \eqref{eq:px} and \eqref{eq:py}.
\end{proof}

Eventually we will need to evaluate the above formulas on the boundary of the domain, that is, at $\rho = 1+\varepsilon B(\theta)$.
In this direction, recall that the   ``type I'' Dirichlet--Neumann map of the disk, defined as
$$\Lambda_0 f(\theta) : = \partial_\rho  P_0 f(1,\theta) ,$$
is the operator $C^{s+1}(\mathbb{T}) \to C^s(\mathbb{T})$ given by
\[
\Lambda_0 f(\theta)= \sum_{n\in\mathbb{Z}} f_n |n|\, e^{in\theta}
\qquad\text{if}\quad f(\theta) = \sum_{n\in\mathbb{Z}} f_n e^{in\theta} .
\]
Also, we  define  the ``type II'' Dirichlet--Neumann map of the disk  as
\[
M_0 f(\theta) : = \partial_\theta  P_0 f(1,\theta) ,
\]
which is the operator $C^{s+1}(\mathbb{T}) \to C^s(\mathbb{T})$ given by
\[
M_0 f(\theta)= i\sum_{n\in\mathbb{Z}} f_n n\, e^{in\theta}
\qquad\text{if}\quad f(\theta) = \sum_{n\in\mathbb{Z}} f_n e^{in\theta} .
\] 
Both these two Dirichlet--Neumann maps of the domain $\varOmega_{\varepsilon, B}$ are elliptic pseudodifferential operators of first order of the form
\begin{equation*}
\Lambda_{\varepsilon B} f := \partial_\rho P_{\varepsilon B} f \big|_{\rho=1+\varepsilon B}
= \Lambda_0 f + O(\varepsilon) ,
\end{equation*}
and
\begin{equation*}
M_{\varepsilon B} f := \partial_\theta P_{\varepsilon B} f \big|_{\rho=1+\varepsilon B}
= M_0 f + O(\varepsilon) ,
\end{equation*}
where the above notation can be taken to mean that the $C^s$ norm of the error is bounded by $C\varepsilon\|f\|_{C^{s+1}}$. 
Note also that the Poisson operator of the domain $\varOmega_{\varepsilon B}$ satisfies
\begin{equation*}
P_{\varepsilon B} f  =   P_{0} f  + O(\varepsilon) ,
\end{equation*}
where the above notation can be taken to mean that the $C^s$ norm of the error is bounded by $C\varepsilon\|f\|_{C^{s}}$.

Near the boundary, Proposition \ref{prop:first-order-dirichlet-expansion} gives
\begin{equation*} 
\begin{split}
\partial_x&\phi_{\varepsilon,B}\big|_{\rho=1+\varepsilon B}\\
&=2A_0(1+\varepsilon B(\theta))\cos\theta+\varepsilon\left[2A_1\cos^2\theta-h^2\sqrt{F_R}\cos\theta-2A_0\partial_x(P_{\varepsilon B}B)|_{\rho=1+\varepsilon B}\right]+O(\varepsilon^2)\\
&=2A_0 \cos\theta+\varepsilon\left[2A_0  B(\theta)\cos\theta+2A_1\cos^2\theta-h^2\sqrt{F_R}\cos\theta-2A_0\partial_x(P_{0}B)|_{\rho=1 }\right]+O(\varepsilon^2),
\end{split}
\end{equation*}
\begin{equation*} 
\begin{split}
\partial_y&\phi_{\varepsilon,B}\big|_{\rho=1+\varepsilon B}\\
&=2A_0(1+\varepsilon B(\theta))\sin\theta+\varepsilon\left[A_1\sin2\theta-h^2\sqrt{F_R}\sin\theta-2A_0\partial_y(P_{\varepsilon B}B)|_{\rho=1+\varepsilon B}\right]+O(\varepsilon^2)\\
&=2A_0 \sin\theta+\varepsilon\left[2A_0  B(\theta)\sin\theta+A_1\sin2\theta-h^2\sqrt{F_R}\sin\theta-2A_0\partial_y(P_{0}B)|_{\rho=1}\right]+O(\varepsilon^2),
\end{split}
\end{equation*}
and
\begin{equation}\label{eq:prho-phi}
\begin{split}
\partial_{\rho}\phi_{\varepsilon,B}\big|_{\rho=1+\varepsilon B}&=  2A_0(1+\varepsilon B(\theta))+\varepsilon \left[  2A_1 \cos\theta-  h^2\sqrt{F_R} 
-2 A_0\partial_{\rho}(P_{\varepsilon B}B)|_{\rho=1+\varepsilon B}\right] 
+O(\varepsilon^2) \\
&=2A_0 +\varepsilon \left[ 2A_0 B(\theta)+ 2A_1 \cos\theta-  h^2\sqrt{F_R} 
-2 A_0 \Lambda_{0}B  \right] 
+O(\varepsilon^2).
\end{split}
\end{equation}

\section{Analysis of the Bernoulli--Neumann condition}\label{sec:neumann-analysis}

In this section we  consider the Neumann condition \eqref{eq:bernoulli-bc}. 
Notice that this  condition  is equivalent to
\begin{align}\label{eq:Bernoulli-expanded}
&(h^2+x_2^2)(\partial_{x_1}\psi)^2+(h^2+x_1^2)(\partial_{x_2}\psi)^2
-2x_1x_2\partial_{x_1}\psi\,\partial_{x_2}\psi
-c(x_1^2+x_2^2+h^2)=-F(0)^2
\end{align}
on the perturbed boundary $ \partial\varOmega_{\varepsilon, B}=\{(\rho,\theta):\rho=1+\varepsilon B(\theta)\} $.  Since $\psi=\varepsilon^2\phi$, set
\begin{equation*}
c_{\varepsilon,B}:=\varepsilon^{-2}c.
\end{equation*}
Using the linear transformation \eqref{eq:chart}, this becomes the boundary condition for $ \phi $
\begin{align*}
&\sigma_1^{-2}\bigl(h^2+(\varepsilon\sigma_2y)^2\bigr)(\partial_x\phi)^2
+\sigma_2^{-2}\bigl(h^2+(R+\varepsilon\sigma_1x)^2\bigr)(\partial_y\phi)^2
-2(R+\varepsilon\sigma_1x)(\varepsilon\sigma_2y)\sigma_1^{-1}\sigma_2^{-1}\partial_x\phi\,\partial_y\phi\notag\\
&\qquad -c_{\varepsilon,B}\bigl((R+\varepsilon\sigma_1x)^2+(\varepsilon\sigma_2y)^2+h^2\bigr)
=-\varepsilon^{-2}F(0)^2.
\end{align*}
\subsection{Boundary functional and choice of the normalization constant}
Let us now define the boundary functional
\begin{equation}\label{eq:Fboundary}
\mathcal F(\varepsilon,B) (\theta):={}\mathcal F_0(\varepsilon,B) (\theta)- c_{\varepsilon,B}\Big[(R+\varepsilon\sigma_1x)^2+(\varepsilon\sigma_2y)^2+h^2\Big]_{\rho=1+\varepsilon B(\theta)},
\end{equation}
where
\begin{align*}
\mathcal F_0(\varepsilon,B) (\theta):={}&\Big[\sigma_1^{-2}\bigl(h^2+(\varepsilon\sigma_2y)^2\bigr)(\partial_x\phi)^2
+\sigma_2^{-2}\bigl(h^2+(R+\varepsilon\sigma_1x)^2\bigr)(\partial_y\phi)^2\notag\\
&\hspace{4em}-2(R+\varepsilon\sigma_1x)\varepsilon\sigma_1^{-1}y\partial_x\phi\,\partial_y\phi\Big]_{\rho=1+\varepsilon B(\theta)}.
\end{align*}
Then from the choice of the circulation $ F $, the Neumann condition is equivalent to $ \mathcal F (\varepsilon,B)(\theta)+F_R=0 $ as an identity in $\theta$.

Next we pick the constant $c_{\varepsilon,B}$ so that $\mathcal{F}(\varepsilon,B)$ is $L^2$-orthogonal to $\cos\theta$, that is, $ \int_0^{2\pi}\mathcal F_{\varepsilon,B}(\theta)\cos\theta\, d\theta=0 $.
The reason for which we do so will be clear later.
This amounts to setting
\begin{equation}\label{eq:Ceps-expression}
c_{\varepsilon,B}=\frac{c_{\varepsilon,B,1}}{c_{\varepsilon,B,2}},
\end{equation}
where, with $x=(1+\varepsilon B(\theta))\cos\theta$ and $y=(1+\varepsilon B(\theta))\sin\theta$,
\begin{equation*}
\begin{aligned}
c_{\varepsilon,B,1}:={}&\int_0^{2\pi}\Big[\sigma_1^{-2}(h^2+(\varepsilon\sigma_2y)^2)(\partial_x\phi)^2
+\sigma_2^{-2}(h^2+(R+\varepsilon\sigma_1x)^2)(\partial_y\phi)^2\\
&\hspace{7em}-2(R+\varepsilon\sigma_1x)\varepsilon\sigma_1^{-1}y\partial_x\phi\,\partial_y\phi\Big]\cos\theta d\theta,\\
c_{\varepsilon,B,2}:={}&\int_0^{2\pi}\Big[(R+\varepsilon\sigma_1(1+\varepsilon B(\theta))\cos\theta)^2
+(\varepsilon\sigma_2(1+\varepsilon B(\theta))\sin\theta)^2+h^2\Big]\cos\theta d\theta.
\end{aligned}
\end{equation*}
	The next result guarantees that this choice of $c_{\varepsilon,B}$ makes sense for all small enough $\varepsilon$, including $\varepsilon=0$, and shows that $\mathcal{F}(0,B)$ is in fact the constant
\[
\kappa :=-\frac{A_0}{4}\sigma_2c=-F_R,
\]
which depends on $R$ but not on $B$.
In what follows, we employ the notation
\[
\langle f, g\rangle := \int_0^{2\pi} f(\theta)\,g(\theta)\,d\theta
\]
for the $L^2$ product on $\mathbb{T}$.

\begin{proposition}\label{prop:F_expansion}
	For small enough $\varepsilon$ and any $B$,
\begin{align*} 
c_{\varepsilon,B} &=\frac{4A_0A_1\sigma_2^{-1}-R^3A_0^2\sigma_1^{-1}\sigma_2^{-1}}{R\sigma_1}+O(\varepsilon),
\end{align*}
 \begin{align*} 
 \mathcal{F}(\varepsilon,B)& = \kappa + O(\varepsilon).
 \end{align*}
 
\end{proposition}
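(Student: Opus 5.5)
The plan is to expand the numerator $c_{\varepsilon,B,1}$ and the denominator $c_{\varepsilon,B,2}$ of \eqref{eq:Ceps-expression} in powers of $\varepsilon$. Both vanish at $\varepsilon=0$, so their quotient is determined by the coefficients of order $\varepsilon$. The first-order expansion of Proposition~\ref{prop:first-order-dirichlet-expansion}, with its $O(\varepsilon^2)$ error uniform for $B$ in bounded sets, is exactly what is needed for this.

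\emph{Step 1: the denominator.} Expanding $(R+\varepsilon\sigma_1(1+\varepsilon B)\cos\theta)^2+h^2+O(\varepsilon^2)$ and integrating against $\cos\theta$, the constant part integrates to zero. The surviving term is $2R\varepsilon\sigma_1\cos^2\theta$, so
\[
c_{\varepsilon,B,2}=2\pi R\sigma_1\varepsilon+O(\varepsilon^2).
\]
This is nonzero for small $\varepsilon>0$ and is independent of $B$ at leading order.

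\emph{Step 2: the numerator.} The key identity is $\sigma_1^2=h^2\sigma_2$, which gives $\sigma_1^{-2}h^2=\sigma_2^{-1}=\sigma_2^{-2}(h^2+R^2)$. Hence, up to $O(\varepsilon)$ corrections to the coefficients, the bracket in $c_{\varepsilon,B,1}$ equals $\sigma_2^{-1}|\nabla\phi|^2$. The explicit $\varepsilon$-corrections to the coefficients are $\sigma_2^{-2}\,2R\varepsilon\sigma_1 x(\partial_y\phi)^2$ and $-2R\varepsilon\sigma_1^{-1}y\,\partial_x\phi\,\partial_y\phi$; the remaining coefficient corrections are $O(\varepsilon^2)$.

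I will evaluate $|\nabla\phi|^2$ on $\rho=1+\varepsilon B$ using \eqref{eq:Dphi-square} together with $\partial_\rho(P_{\varepsilon B}B)|_{\rho=1+\varepsilon B}=\Lambda_0B+O(\varepsilon)$. This gives
\[
|\nabla\phi|^2=4A_0^2+\varepsilon\bigl[8A_0^2(B-\Lambda_0B)+8A_0A_1\cos\theta-4A_0h^2\sqrt{F_R}\bigr]+O(\varepsilon^2).
\]
Integrated against $\cos\theta$, the constants drop out. The $B$-dependent term also drops out, because $\Lambda_0$ acts as the identity on the first Fourier mode, so $\langle B-\Lambda_0B,\cos\theta\rangle=0$. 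This is why the limit of $c_{\varepsilon,B}$ does not depend on $B$. What remains is $8\pi A_0A_1\sigma_2^{-1}\varepsilon$.

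In the two correction terms I insert the leading-order values $\partial_x\phi=2A_0\cos\theta$, $\partial_y\phi=2A_0\sin\theta$ and use $\int_0^{2\pi}\cos^2\theta\sin^2\theta\,d\theta=\pi/4$. They contribute
\[
2\pi RA_0^2\varepsilon\bigl(\sigma_1\sigma_2^{-2}-\sigma_1^{-1}\bigr)=-2\pi R^3A_0^2\sigma_1^{-1}\sigma_2^{-1}\varepsilon,
\]
where I used again $\sigma_1^2-\sigma_2^2=-R^2\sigma_2$. Dividing by Step 1 yields the stated formula for $c_{\varepsilon,B}$. The quotient is a smooth function of $\varepsilon$ down to $\varepsilon=0$ because both expansions have uniform $O(\varepsilon^2)$ remainders.

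\emph{Step 3: the functional.} With $c_{\varepsilon,B}=c_0+O(\varepsilon)$, the definition \eqref{eq:Fboundary} gives
\[
\mathcal F(\varepsilon,B)=\sigma_2^{-1}\,4A_0^2-c_0(R^2+h^2)+O(\varepsilon)=A_0\sigma_2 c-c_0\sigma_2+O(\varepsilon),
\]
where I used $A_0=\sigma_2^2c/4$. It remains to check that this constant equals $\kappa=-A_0\sigma_2c/4$, i.e.\ that $c_0=\tfrac54A_0c$. This is a purely algebraic verification: substitute the explicit formula for $A_1$ from Proposition~\ref{prop:first-order-dirichlet-expansion}, using $\sigma_1^2=h^2\sigma_2$, $\sigma_2=h^2+R^2$, and $A_0=\sigma_2^2c/4$. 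I expect this bookkeeping, rather than any analytic difficulty, to be the main obstacle. In particular, one must keep track of which coefficient corrections are genuinely $O(\varepsilon)$ and of the cancellation $\sigma_1^{-2}h^2=\sigma_2^{-1}$. The uniformity in $B$ of all error terms follows from Proposition~\ref{prop:first-order-dirichlet-expansion} and the stated $O(\varepsilon)$ bounds for $P_{\varepsilon B}$ and $\Lambda_{\varepsilon B}$.
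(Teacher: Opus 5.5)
Your proposal is correct and follows the paper's proof essentially step for step: expand $c_{\varepsilon,B,2}=2\pi R\sigma_1\varepsilon+O(\varepsilon^2)$, expand $\mathcal F_0$ on $\rho=1+\varepsilon B$ via Proposition~\ref{prop:first-order-dirichlet-expansion}, eliminate the $B$-dependence through $\langle B-\Lambda_0B,\cos\theta\rangle=0$, collect $8\pi A_0A_1\sigma_2^{-1}\varepsilon$ and the two $\cos\theta\sin^2\theta$ corrections, and divide. The algebra you defer, which the paper also leaves implicit, does close: with $c=4A_0\sigma_2^{-2}$ one finds $A_1=RA_0\bigl(\tfrac{\sigma_2}{4\sigma_1}+\tfrac{\sigma_1}{\sigma_2}\bigr)$, so $4A_0A_1\sigma_2^{-1}-R^3A_0^2\sigma_1^{-1}\sigma_2^{-1}=5RA_0^2\sigma_1\sigma_2^{-2}$ (using $h^2=\sigma_1^2\sigma_2^{-1}$), giving $c_0=5A_0^2\sigma_2^{-2}=\tfrac54A_0c$ as required.
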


\begin{proof}
	Let us assume that $\varepsilon\neq0$.	The computation of $c_{\varepsilon,B,2}$ is straightforward:
\begin{equation}\label{eq: c_2}
c_{\varepsilon,B,2}= \int_0^{2\pi} \bigl[ \sigma_2 + 2 R\sigma_1\varepsilon\cos\theta \bigr] \cos\theta\,d\theta
+ O(\varepsilon^2)
= 2\pi R\sigma_1\varepsilon  + O(\varepsilon^2) .
\end{equation}
 
For $ c_{\varepsilon,B,1} $, it stems from the first order expansion for $\partial_x\phi_{\varepsilon,B}$ and $\partial_y\phi_{\varepsilon,B}$ derived in~Proposition \ref{prop:first-order-dirichlet-expansion} that
\begin{align}\label{eq: F_0}
 \mathcal F_0&(\varepsilon,B)(\theta)\notag\\
 =& \biggl[\sigma_1^{-2}\left(h^2+O\left(\varepsilon^2\right)\right)\left(2 A_0 x+\varepsilon\left[A_1\left(3 x^2+y^2-1\right) -h^2\sqrt{F_R}x -2 A_0 \partial_x  (P_0B)\right]+O\left(\varepsilon^2\right)\right)^2  \notag\\
& +\sigma_2^{-2}\left(\sigma_2+2 R \varepsilon \sigma_1 x+O\left(\varepsilon^2\right)\right)\left(2 A_0 y+\varepsilon\left[2 A_1 x y -h^2\sqrt{F_R}y-2 A_0 \partial_y (P_0B)\right]+O\left(\varepsilon^2\right)\right)^2  \notag\\
& -2\left(R+\varepsilon \sigma_1 x\right) \varepsilon \sigma_1^{-1} y\left(2 A_0 x+\varepsilon\left[A_1\left(3 x^2+y^2-1\right) -h^2\sqrt{F_R}x -2 A_0 \partial_x  (P_0B)\right]+O\left(\varepsilon^2\right)\right) \notag\\
&  \cdot\left(2 A_0 y+\varepsilon\left[2 A_1 x y -h^2\sqrt{F_R}y-2 A_0 \partial_y (P_0B)\right]+O\left(\varepsilon^2\right)\right)\biggr]_{\rho=1+\varepsilon B(\theta)}  \notag \\
 =& \biggl[\left(\sigma_2^{-1}+O\left(\varepsilon^2\right)\right)\left(4 A_0^2 x^2+4 A_0 x \varepsilon\left[A_1\left(3 x^2+y^2-1\right)-h^2\sqrt{F_R}x - 2A_0 \partial_x (P_0B)\right]+O\left(\varepsilon^2\right)\right)  \notag\\
& +\left(\sigma_2^{-1}+2 R \varepsilon \sigma_1 \sigma_2^{-2} x  +O\left(\varepsilon^2\right)\right)\left(4 A_0^2 y^2+4 A_0 y \varepsilon\left[2 A_1 x y-h^2\sqrt{F_R}y-2 A_0 \partial_y (P_0 B)\right]+O\left(\varepsilon^2\right)\right) \notag\\
& -2 \varepsilon(R+O(\varepsilon)) \sigma_1^{-1} y \cdot\biggl(4 A_0^2 x y+\varepsilon (4 A_0 A_1 x^2 y-4 A_0^2 x \partial_y (P_0 B)+2 A_0 A_1 y\left(3 x^2+y^2-1\right)  \notag \\
&  -4 A_0^2 y \partial_x (P_0 B)  -4A_0h^2\sqrt{F_R}xy )+O\left(\varepsilon^2\right)\biggr)\bigg]_{\rho=1+\varepsilon B(\theta)} \notag\\
=&4  A_0^2\sigma_2^{-1}\left(1+\varepsilon B\right)^2+4 A_0 \sigma_2^{-1} \varepsilon\left[A_1\left(3 x^3+x y^2-x\right)-h^2\sqrt{F_R}x^2 -2 A_0x \partial_x(P_0 B)\right]_{\rho=1+\varepsilon B(\theta)}  \notag\\	
& +8 R \varepsilon \sigma_1 \sigma_2^{-2} A_0^2 \cos\theta\sin^2\theta+4 A_0 \sigma_2^{-1} \varepsilon\left[2 A_1 x y^2-h^2\sqrt{F_R}y^2-2A_0y \partial_y (P_0 B)\right]_{\rho=1+\varepsilon B(\theta)} \notag \\
& -8 R\varepsilon  \sigma_1^{-1} A_0^2\cos\theta\sin^2\theta    +O\left(\varepsilon^2\right) \notag\\
=& 4 A_0^2\sigma_2^{-1}+ 8 A_0^2 \sigma_2^{-1}B \varepsilon+4 A_0 \sigma_2^{-1} \varepsilon\left(2A_1\cos\theta-h^2\sqrt{F_R}-2A_0\Lambda_0 B \right)  \notag\\
& +8 R \varepsilon \sigma_1 \sigma_2^{-2} A_0^2 \cos\theta\sin^2\theta-8 R \varepsilon \sigma_1^{-1} A_0^2 \cos\theta\sin^2\theta+O\left(\varepsilon^2\right).
\end{align} 
This implies that 
	\begin{align*}
c_{\varepsilon,B,1} =& \int_0^{2\pi} \biggl[4 A_0^2\sigma_2^{-1}+ 8 A_0^2\sigma_2^{-1} B \varepsilon+4 A_0 \sigma_2^{-1} \varepsilon\left(2A_1\cos\theta-h^2\sqrt{F_R}-2A_0\Lambda_0 B \right) \notag\\
&\ \ \ \ \ \ \  +8 R \varepsilon \sigma_1 \sigma_2^{-2} A_0^2 \cos\theta\sin^2\theta-8 R \varepsilon \sigma_1^{-1} A_0^2 \cos\theta\sin^2\theta+O\left(\varepsilon^2\right)\biggr]\cos\theta \,d\theta \notag\\
=	& 8 A_0^2\sigma_2^{-1} \varepsilon\langle B - \Lambda_0 B, \cos\theta\rangle
+ 8A_0 A_1\sigma_2^{-1}\varepsilon  \int_0^{2\pi} \cos^2\theta\,d\theta \notag\\
&+ 8R  (\sigma_1 \sigma_2^{-2}-\sigma_1^{-1}) A_0^2\varepsilon\int_0^{2\pi} \sin^2\theta\cos^2\theta\,d\theta + O(\varepsilon^2) \notag\\
=	& 8\pi A_0 A_1\sigma_2^{-1}\varepsilon-2\pi R^3 A_0^2\sigma_1^{-1}\sigma_2^{-1} \varepsilon+ O(\varepsilon^2) , 
	\end{align*}
	where we have used that $ \int_0^{2\pi} \sin^2\theta\cos^2\theta\,d\theta=\frac{\pi}{4} $ and that
	\[
	\langle B - \Lambda_0 B, \cos\theta\rangle
	= \langle B, (1-\Lambda_0)\cos\theta\rangle = 0
	\]
	for any $B$ because $\Lambda_0$ is self-adjoint and $\Lambda_0(\cos\theta)=\cos\theta$. The $ h^2\sqrt{F_R} $ term vanishes since its coefficient $ 4 A_0 \sigma_2^{-1} \varepsilon\int_0^{2\pi} \cos \theta\,d\theta=0. $ This readily implies that $c_{\varepsilon,B}$ can be defined at $\varepsilon=0$ by continuity and yields the formula for $c_{\varepsilon,B}$ presented in the statement.
	Also, the above formulas and the definition of $ A_0 $ and $ A_1 $ immediately imply that
	\begin{align*}
	\mathcal{F}(\varepsilon,B)
	&= 	\mathcal{F}_0(\varepsilon,B)
	- c_{\varepsilon,B} \left( (R+\varepsilon\sigma_1(1+\varepsilon B(\theta))\cos\theta)^2+(\varepsilon\sigma_2(1+\varepsilon B(\theta))\sin\theta)^2+h^2\right)   \\
	&= 4 A_0^2\sigma_2^{-1}-\frac{4A_0A_1\sigma_2^{-1}-R^3A_0^2\sigma_1^{-1}\sigma_2^{-1}}{R\sigma_1}\cdot \sigma_2 + O(\varepsilon) \\
	&=-\frac{A_0}{4}\sigma_2c + O(\varepsilon),
	\end{align*}
	as claimed.
 
\end{proof}

\subsection{Choice of the leading boundary deformation $ B^* $}
We now choose $ B=B^* $ such that $ \mathcal{F}(\varepsilon,B^*)= \kappa+O(\varepsilon^2) $, from which we can use the implicit theorem to show the existence of $ B $ near $ B^* $ satisfying the Neumann condition. A striking point is that, different from \cite{DominguezVazquezEncisoPeraltaSalas2021}, we observe that $ \mathcal{F}(\varepsilon,0)\neq \kappa+O(\varepsilon^2) $, see  Equation \eqref{eq:Fboundary-6A3}. Actually, if we set
$$B^*:=C^*\cos3\theta+t^*,$$
with $ C^*:=R^3\sigma_1^{-1}/8 $ and $ t^*:=5h^2\sqrt{F_R}/(18A_0) $,
we then have 
\begin{proposition}\label{prop:choice of B_0}
 	For small enough $\varepsilon$,
 
	\[
	\mathcal{F}(\varepsilon,B^*) = \kappa + O(\varepsilon^2).
	\]
	
\end{proposition}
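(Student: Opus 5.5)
The plan is to make the $O(\varepsilon)$ term of $\mathcal F(\varepsilon,B)$ explicit as a function of $B$, and then check that the choice $B^*=C^*\cos3\theta+t^*$ kills it. All the needed ingredients are already in the proof of Proposition~\ref{prop:F_expansion}. We keep every term of \eqref{eq: F_0} at order $\varepsilon$ and then subtract the $c_{\varepsilon,B}$-term.

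\textbf{Step 1: the first-order expression.} For the subtracted term we need
\[
\Big[(R+\varepsilon\sigma_1x)^2+(\varepsilon\sigma_2y)^2+h^2\Big]_{\rho=1+\varepsilon B}=\sigma_2+2R\sigma_1\varepsilon\cos\theta+O(\varepsilon^2).
\]
We also need $c_{\varepsilon,B}=c^{(0)}+\varepsilon c^{(1)}_B+O(\varepsilon^2)$, where $c^{(0)}$ is the constant of Proposition~\ref{prop:F_expansion}. Combining these with \eqref{eq: F_0} and the identity $\cos\theta\sin^2\theta=\tfrac14(\cos\theta-\cos3\theta)$ gives
\[
\mathcal F(\varepsilon,B)=\kappa+\varepsilon\Big[8A_0^2\sigma_2^{-1}(B-\Lambda_0B)-4A_0\sigma_2^{-1}h^2\sqrt{F_R}-2R^3A_0^2\sigma_1^{-1}\sigma_2^{-1}\cos3\theta+a\cos\theta-\sigma_2c^{(1)}_B\Big]+O(\varepsilon^2).
\]
The $\cos3\theta$ coefficient comes from $8R A_0^2(\sigma_1\sigma_2^{-2}-\sigma_1^{-1})\cdot(-\tfrac14)$, using $\sigma_1^2-\sigma_2^2=-R^2\sigma_2$. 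Here $a$ is some constant. The $\cos\theta$ part vanishes automatically: $c_{\varepsilon,B}$ was chosen so that $\mathcal F$ is $L^2$-orthogonal to $\cos\theta$, and every other term is orthogonal to $\cos\theta$.

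\textbf{Step 2: substitute $B^*$.} Since $(1-\Lambda_0)\cos3\theta=-2\cos3\theta$ and $(1-\Lambda_0)t^*=t^*$, the $\cos3\theta$ coefficient becomes
\[
-16A_0^2\sigma_2^{-1}C^*-2R^3A_0^2\sigma_1^{-1}\sigma_2^{-1}.
\]
This should vanish for $C^*=R^3\sigma_1^{-1}/8$. I expect a sign issue here. If the true cross term in $\mathcal F_0$ has the opposite sign, so that the coefficient is $+2R^3\dots$, then the choice $C^*=+R^3\sigma_1^{-1}/8$ is exactly what cancels it. I will recheck this sign carefully against \eqref{eq: F_0}.

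\textbf{Step 3: the constant mode.} The constant mode reads
\[
8A_0^2\sigma_2^{-1}t^*-4A_0\sigma_2^{-1}h^2\sqrt{F_R}-\sigma_2c^{(1)}_{B^*}.
\]
The main obstacle is computing $c^{(1)}_{B}$. This requires the $O(\varepsilon^2)$ terms of $c_{\varepsilon,B,1}$ and $c_{\varepsilon,B,2}$, which in turn require the second-order expansion of $\phi_{\varepsilon,B}$ projected onto $\cos\theta$. Note that $t^*$ involves the factor $5/18$ rather than $1/2$. This strongly suggests that second-order contributions of the form $h^2\sqrt{F_R}\times\varepsilon\,\partial_x(\cdot)$ terms in \eqref{eq:phi}, interacting with the $\cos\theta$ weight, produce a nonzero constant in $c^{(1)}$.

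The approach is as follows. First, write the $\varepsilon^2$ equation for $\phi_2=(\phi-\phi_0-\varepsilon\phi_1)/\varepsilon^2$, with source obtained by inserting $\phi_1$ into the $\varepsilon$-coefficients of \eqref{eq:phi} together with the $\varepsilon^2$ part of $\mathcal R$. Only the $\cos\theta$ Fourier mode of $\partial_\rho\phi_2$ on $\rho=1$ is needed, since only the projection onto $\cos\theta$ enters $c_{\varepsilon,B,1}$. So solve only that mode: a radial ODE $\partial_{\rho\rho}+\rho^{-1}\partial_\rho-\rho^{-2}$ with polynomial source and boundary value determined by $B^*$. Next, collect all contributions to $\langle\mathcal F_0,\cos\theta\rangle$ at order $\varepsilon^2$, including the Taylor expansion at $\rho=1+\varepsilon B^*$. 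Then divide by $2\pi R\sigma_1\varepsilon$ and subtract the leading quotient to obtain $c^{(1)}_{B^*}$, which depends linearly on $t^*$ and $h^2\sqrt{F_R}$; the $\cos3\theta$ part $C^*$ drops out by orthogonality except through products with $\cos2\theta$-type terms, which must be tracked. Finally, set the constant mode to zero and check that it yields $t^*=5h^2\sqrt{F_R}/(18A_0)$.

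Uniformity of the $O(\varepsilon^2)$ remainder follows from Propositions~\ref{prop:Dirichlet} and \ref{prop:first-order-dirichlet-expansion}, since $B^*$ is fixed.
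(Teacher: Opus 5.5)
Your outline is the paper's argument. You expand $\mathcal F(\varepsilon,B^*)$ to first order via \eqref{eq: F_0}, cancel the third mode using $(1-\Lambda_0)\cos3\theta=-2\cos3\theta$, and let the $\cos\theta$ mode die by the choice of $c_{\varepsilon,B}$. You then reduce the constant mode to the $\varepsilon$-coefficient $C_3=c^{(1)}_{B^*}$ of $c_{\varepsilon,B^*}$, obtained from the second-order expansion of $\phi_{\varepsilon,B^*}$. As you note, only the $\cos\theta$ mode of $\partial_\rho\phi_2|_{\rho=1}$ matters there.

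The sign you worry about in Step~2 is only an arithmetic slip. Since $\sigma_1^2=h^2\sigma_2$, one has $\sigma_1\sigma_2^{-2}-\sigma_1^{-1}=-R^2\sigma_1^{-1}\sigma_2^{-1}$. So your own expression $8RA_0^2(\sigma_1\sigma_2^{-2}-\sigma_1^{-1})\cdot(-\frac14)$ equals $+2R^3A_0^2\sigma_1^{-1}\sigma_2^{-1}$. Thus \eqref{eq: F_0} is fine, and $C^*=R^3\sigma_1^{-1}/8$ does kill the $\cos3\theta$ term.

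The genuine gap is Step~3, which you only describe, and whose announced outcome cannot be reached. The second-order computation (the paper's Steps~I--II) gives $C_3=\frac52A_0c\,t^*-\frac c4h^2\sqrt{F_R}$, and this value is correct. Two independent checks confirm it. First, $\partial_{t^*}C_3=2c^{(0)}=\frac52A_0c$, because a constant $t^*$ in $B$ is absorbed by $\varepsilon\mapsto\varepsilon(1+\varepsilon t^*)$, under which $c_{\varepsilon,B}$ picks up the factor $(1+\varepsilon t^*)^2$. Second, the $h^2\sqrt{F_R}$ part is what one gets by absorbing the source $-2\varepsilon h^2\sqrt{F_R}$ into a shift of $c$.

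Insert this into your constant mode $8A_0^2\sigma_2^{-1}t^*-4A_0\sigma_2^{-1}h^2\sqrt{F_R}-\sigma_2c^{(1)}_{B^*}$, keeping your correct sign on the last term. Using $8A_0^2\sigma_2^{-1}=2A_0\sigma_2c$ and $4A_0\sigma_2^{-1}=\sigma_2c$, you get
\[
2A_0\sigma_2c\,t^*-\sigma_2c\,h^2\sqrt{F_R}-\sigma_2C_3=-\tfrac12A_0\sigma_2c\,t^*-\tfrac34\sigma_2c\,h^2\sqrt{F_R}.
\]
This vanishes only for $t^*=-3h^2\sqrt{F_R}/(2A_0)$. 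At the stated value $t^*=5h^2\sqrt{F_R}/(18A_0)$ it equals $-\frac89\sigma_2c\,h^2\sqrt{F_R}\neq0$.

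The paper reaches $5/18$ only because, between the two lines of \eqref{eq:Fboundary-6A3}, the term $-C_3\sigma_2\varepsilon$ turns into $+C_3\sigma_2\varepsilon$. That is the origin of its $\frac92A_0\sigma_2c\,t^*-\frac54\sigma_2c\,h^2\sqrt{F_R}$. The proof of Theorem~\ref{thm:invert} uses the correct sign and finds the coefficient $-\frac12A_0\sigma_2c=2\kappa$ on constants, which contradicts the $\frac92A_0\sigma_2c$ used in this step.

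So your route, carried out, does not confirm the stated $B^*$. It proves the proposition with $t^*=-3h^2\sqrt{F_R}/(2A_0)$ and the same $C^*$. The downstream implicit-function argument survives this correction, since $D_B\mathcal G(0,B^*)$ does not depend on $t^*$.
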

\begin{proof}
  
We define the ($R$-dependent) constant
\begin{equation}\label{eq:C3-def}
C_3:=\lim_{\varepsilon\to0}\frac{c_{\varepsilon,B^*}-
	\frac{4A_0A_1\sigma_2^{-1}-R^3A_0^2\sigma_1^{-1}\sigma_2^{-1}}{R\sigma_1}}{\varepsilon}.
\end{equation}
Then $ c_{\varepsilon,B^*}=	\frac{4A_0A_1\sigma_2^{-1}-R^3A_0^2\sigma_1^{-1}\sigma_2^{-1}}{R\sigma_1}+C_3\varepsilon+O(\varepsilon^2) $. From Equation \eqref{eq: F_0}, we expand the functional to the order $ \varepsilon^2 $ as
\begin{equation}\label{eq:Fboundary-6A3}
\begin{split}
\mathcal F(\varepsilon,B^*)={}&4 A_0^2\sigma_2^{-1}+ 8 A_0^2 \sigma_2^{-1} \varepsilon(B^*-\Lambda_0B^*)+ 8 A_0A_1 \sigma_2^{-1} \varepsilon \cos\theta    \\
& -4 A_0 \sigma_2^{-1} \varepsilon h^2\sqrt{F_R}-8 R^3 A_0^2  \sigma_1^{-1} \sigma_2^{-1} \varepsilon\cos\theta\sin^2\theta\\
&-\left( \frac{4A_0A_1\sigma_2^{-1}-R^3A_0^2\sigma_1^{-1}\sigma_2^{-1}}{R\sigma_1}+C_3\varepsilon \right)\left( \sigma_2+2R\varepsilon\sigma_1x \right)+O\left(\varepsilon^2\right)  \\
={}&\kappa+ 8 A_0^2 \sigma_2^{-1} \varepsilon(B^*-\Lambda_0B^*)+2R^3A_0^2\sigma_1^{-1}\sigma_2^{-1}\varepsilon \cos\theta-4A_0\sigma_2^{-1}\varepsilon h^2\sqrt{F_R}\\
&-8R^3A_0^2\sigma_1^{-1}\sigma_2^{-1}\varepsilon \cos\theta\sin^2\theta
+C_3\sigma_2\varepsilon+O(\varepsilon^2)\\
=&\kappa+ \left( 8 A_0^2 \sigma_2^{-1}  (B^*-\Lambda_0B^*)+2R^3A_0^2\sigma_1^{-1}\sigma_2^{-1} \cos3\theta -4A_0\sigma_2^{-1}  h^2\sqrt{F_R} 
+C_3\sigma_2\right) \varepsilon+O(\varepsilon^2).
\end{split}
\end{equation}
 
We now calculate $ C_3 $. We obtain from Equation \eqref{eq: c_2} that
\begin{equation*} 
c_{\varepsilon,B^*,2}=   2\pi R\sigma_1\varepsilon +2R\sigma_1\varepsilon^2\langle B^*  , \cos^2\theta\rangle
+ O(\varepsilon^3).
\end{equation*}
The coefficient $C_3$ is then determined from   the condition \eqref{eq:Ceps-expression}, that is, 
\begin{equation}\label{eq:C3-integral}
\begin{split}
\int_0^{2\pi}\mathcal F_0(\varepsilon,B^*)(\theta)\cos\theta\, d\theta=&\left (2\pi R\sigma_1\varepsilon +2R\sigma_1\varepsilon^2\langle B^*  , \cos^2\theta\rangle
+ O(\varepsilon^3)\right )c_{\varepsilon,B^*}  
 \\
=&\bigl(4A_0A_1\sigma_2^{-1}-R^3A_0^2\sigma_1^{-1}\sigma_2^{-1}\bigr)2\pi\varepsilon+2\pi R\sigma_1C_3\varepsilon^2 \\
&+\varepsilon^2\bigl(8A_0A_1\sigma_2^{-1}-2R^3A_0^2\sigma_1^{-1}\sigma_2^{-1}\bigr)\langle B^*  , \cos^2\theta\rangle
+O(\varepsilon^3).
\end{split}
\end{equation}
Since $C_3$ appears in the coefficient of the $\varepsilon^2$ term of $ \int_0^{2\pi}\mathcal F_0(\varepsilon,B^*)(\theta)\cos\theta\, d\theta $, we must expand $\phi_{\varepsilon,B^*}$ up to order $O(\varepsilon^3)$.   

\textbf{Step I: Second-order expansion of $ \phi_{\varepsilon,B^*} $.}
From Proposition \ref{prop:first-order-dirichlet-expansion}, it is easy to check that the ($\varepsilon$-dependent) function $ \phi_2 $ defined  as
\begin{equation}\label{eq:phi2-decomp}
\phi_{\varepsilon,B^*}=\phi_0+\varepsilon\phi_1+\varepsilon^2\phi_2,
\end{equation}
where
\begin{equation*}
\phi_0=A_0(\rho^2-1),
\qquad
\phi_1=A_1(\rho^3-\rho)\cos\theta-\frac{h^2\sqrt{F_R}}{2}(\rho^2-1)-2A_0(P_0B^*),
\end{equation*}
satisfies the equation
\begin{align}\label{eq:phi2-eq}
\Delta\phi_2={}&-2R\sigma_1^{-1}xh^2\partial_{xx}\phi_1
+2\sigma_2R\sigma_1^{-1}y\partial_{xy}\phi_1
-4R\sigma_1\sigma_2^{-1}x\partial_{yy}\phi_1\notag\\
& +(R^2+3h^2)R\sigma_1^{-1}\partial_x\phi_1+\varepsilon^{-2}\mathcal{R}(\varepsilon,\phi_{\varepsilon,B^*})+O(\varepsilon),
\end{align}
with the boundary condition
\begin{equation*}
\begin{split}
\phi_2(1+\varepsilon B^*,\theta)=&-\varepsilon^{-2}\left( \phi_0(1+\varepsilon B^*,\theta)+\varepsilon\phi_1(1+\varepsilon B^*,\theta)\right) \\
=& \varepsilon^{-2}\Bigl(-A_0\left (2\varepsilon B^*+\varepsilon^2(B^*)^2\right )-\varepsilon\Bigl[A_1(2\varepsilon B^*+\varepsilon^2(B^*)^2)(1+\varepsilon B^*)\cos\theta\\
&\ \ \ \ \ \ -\frac{h^2\sqrt{F_R}}{2}(2\varepsilon B^*+\varepsilon^2(B^*)^2)-2A_0\left( (1+\varepsilon B^*)^3C^*\cos3\theta+t^* \right) \Bigr] \Bigr)\\
=&-A_0(B^*)^2-2A_1B^*\cos\theta+h^2\sqrt{F_R}B^*+6A_0C^*B^*\cos3\theta+O(\varepsilon)\\
=&5A_0 (C^*\cos3\theta)^2 +4A_0t^* C^*\cos3\theta    -A_0t^{*2}-2A_1 B^*\cos\theta  +h^2\sqrt{F_R}  B^*+O(\varepsilon).
\end{split}
\end{equation*}
Here we have used that $ P_0B^* =C^*\rho^3\cos3\theta+t^*. $ From the order $ \varepsilon^2 $ expansion of $ \mathcal{R}(\varepsilon,\phi_{\varepsilon,B}) $ in Equation \eqref{eq:R-structural}, we observe that the main terms of $ \varepsilon^{-2}\mathcal{R}(\varepsilon,\phi_{\varepsilon,B^*}) $ are linear combinations of $ x^2,y^2 $ and 1. So Equation \eqref{eq:phi2-eq} can be written as
\begin{equation*}
\Delta\phi_2=M_1x^2+M_2y^2+M_3+M_4h^2\sqrt{F_R}x+O(\varepsilon),
\end{equation*}
with $$ M_4=6R\sigma_1^{-1}h^2-(R^2+3h^2)R\sigma_1^{-1} $$ and some explicit constants $M_1, M_2, M_3$ (depending on $R$ but not on $\varepsilon$) that are not relevant for our purposes.  
The solution to this problem is therefore of the form
\begin{align*}
\phi_2={}&\frac{M_1+M_2}{32}(\rho^4-1)
+\frac{M_1-M_2}{24}(\rho^2-1)\rho^2\cos^2\theta
+\frac{M_3}{4}(\rho^2-1)\notag\\
& +\frac{M_4}{8}h^2\sqrt{F_R}(\rho^2-1)\rho\cos\theta+ 5A_0P_0\left( (C^*\cos3\theta)^2\right)+4A_0t^* P_0\left(C^*\cos3\theta\right)\notag\\
&   -A_0t^{*2}-2A_1P_0\left(B^*\cos\theta\right) +h^2\sqrt{F_R}P_0 B^*   +O(\varepsilon).
\end{align*}

\textbf{Step II: Determination of the constant $ C_3 $.}
From Equation \eqref{eq:phi2-decomp}, the integrand of the left-hand side of formula \eqref{eq:C3-integral} satisfies
\begin{equation*}
	\begin{split}
\mathcal F_0&(\varepsilon,B^*)\\
=& \bigg[\left(\sigma_2^{-1}+\varepsilon^2 \sigma_1^{-2} \sigma_2^2 y^2\right)\left(2 A_0 x+\varepsilon\left[A_1\left(3 x^2+y^2-1\right)-h^2 \sqrt{F_R} x-2 A_0 \partial_x  P_0 B^*\right]+\varepsilon^2\partial_x \phi_2\right)^2 \\
	&	+  \left(\sigma_2^{-1}+2 R \varepsilon \sigma_1 \sigma_2^{-2} x+\varepsilon^2 \sigma_1^2 \sigma_2^{-2} x^2\right)\left(2 A_0 y+\varepsilon\left[2 A_1 x y-h^2 \sqrt{F_R} y-2 A_0 \partial_y  P_0 B^*\right]+\varepsilon^2 \partial_y \phi_2\right)^2 \\
	 &	-2\left(R+\varepsilon \sigma_1 x\right) \varepsilon \sigma_1^{-1} y \left(2 A_0 x+\varepsilon\left[A_1\left(3 x^2+y^2-1\right)-h^2 \sqrt{F_R} x-2 A_0 \partial_x  P_0 B^*\right]+\varepsilon^2 \partial_x \phi_2\right) \\
		&  \cdot\left(2 A_0 y+\varepsilon\left[2 A_1 x y-\hbar^2 \sqrt{F_R} y-2 A_0 \partial_y  P_0 B^*\right]+\varepsilon^2 \partial_y \phi_2\right) \bigg]_{\rho=1+ \varepsilon B^*}.
	\end{split}
\end{equation*} 
With  $x=(1+\varepsilon B^*(\theta))\cos\theta$ and $y=(1+\varepsilon B^*(\theta))\sin\theta$, this  yields 
\begin{equation}\label{eq:exp of F_0}
\begin{split}
\mathcal F_0&(\varepsilon,B^*)\\
=& 4 A_0^2 \sigma_2^{-1}\left(1+\varepsilon B^*\right)^2\\
&+4 A_0 \sigma_2^{-1} \varepsilon\left[A_1\left(3 x^3+3 x y^2-x\right)-h^2 \sqrt{F_R}\left(1+\varepsilon B^*\right)^2-2 A_0 x \partial_x P_0 B^*-2 A_0 y \partial_y  P_0 B^*\right] \\
& +\sigma_2^{-1} \varepsilon^2\left[A_1\left(3 x^2+y^2-1\right)-h^2 \sqrt{F_R} x-2 A_0 \partial_x P_0 B^*\right]^2+ 4 A_0 \sigma_2^{-1} \varepsilon^2 x \partial_x \phi_2 \\
&+4 A_0^2\varepsilon^2 \sigma_1^{-2} \sigma_2^2  x^2y^2   + \sigma_2^{-1} \varepsilon^2\left[2 A_1 x y-h^2 \sqrt{F_R} y-2 A_0 \partial_y P_0 B^*\right]^2+ 4 A_0 \sigma_2^{-1} \varepsilon^2 y \partial_y \phi_2  \\
&+ 8 R \varepsilon \sigma_1 \sigma_2^{-2} A_0^2 x y^2+ 8 R \varepsilon^2 \sigma_1 \sigma_2^{-2} A_0\left[2 A_1 x^2 y^2-h^2 \sqrt{F_R} x y^2-2 A_0 x y \partial_y  P_0 B^*\right] \\
& +4 A_0^2 \varepsilon^2 \sigma_1^2 \sigma_2^{-2} x^2 y^2 
  -8 R \varepsilon \sigma_1^{-1} A_0^2 x y^2 -4 R \varepsilon^2 \sigma_1^{-1} A_0\left[2 A_1 x^2 y^2-h^2 \sqrt{F_R} x y^2-2 A_0 x y \partial_y  P_0 B^*\right]  \\
& -4 R \varepsilon^2 \sigma_1^{-1} A_0\left[A_1\left(3 x^2 y^2+y^4-y^2\right)-h^2 \sqrt{F_R} x y^2-2 A_0 y^2 \partial_x  P_0 B^*\right]-8A_0^2\varepsilon^2    x^2y^2+O(\varepsilon^3)\\
=:&\sum_{i=1}^{14}N_i+O(\varepsilon^3).
\end{split}
\end{equation}
From the definition of $ B^* $, we can check that
\begin{align*}
\langle N_1,\cos\theta \rangle=&0,\\
\langle N_2,\cos\theta \rangle=&8\pi A_0A_1 \sigma_2^{-1} \varepsilon+32\pi t^* A_0A_1 \sigma_2^{-1} \varepsilon^2+O(\varepsilon^3),\\
\langle N_3,\cos\theta \rangle=&-4 A_1 \sigma_2^{-1} h^2 \sqrt{F_R}\varepsilon^2 \int \cos ^4 \theta  +12C^* A_0 \sigma_2^{-1} h^2 \sqrt{F_R} \varepsilon^2 \int \cos 2 \theta   \cos ^2 \theta  +O(\varepsilon^3)\\
=&-3\pi A_1 \sigma_2^{-1} h^2 \sqrt{F_R}\varepsilon^2  +6\pi C^* A_0 \sigma_2^{-1} h^2 \sqrt{F_R} \varepsilon^2  +O(\varepsilon^3) ,\\
\langle N_4,\cos\theta \rangle=&8 \sigma_2^{-1} A_0  \frac{M_4}{8}   h^2 \sqrt{F_R}  \varepsilon^2  \int \cos ^4 \theta + 4 \sigma_2^{-1} A_0\varepsilon^2\left(12 A_0 t^*  C^*   \int \cos 2 \theta  \cos ^2 \theta  \right . \\
& \left .-2 A_1 t^*\int \cos ^2 \theta \right)+ 12 C^*  \sigma_2^{-1} A_0  h^2 \sqrt{F_R}  \varepsilon^2\int \cos 2 \theta   \cos ^2 \theta  +O(\varepsilon^3)\\
=& \frac{3\pi}{4}\sigma_2^{-1} A_0 M_4   h^2 \sqrt{F_R} \varepsilon^2 + 24 \pi t^*  C^*\sigma_2^{-1} A_0^2  \varepsilon^2  -8\pi t^*\sigma_2^{-1} A_0   A_1  \varepsilon^2\\
& + 6\pi C^*  \sigma_2^{-1} A_0  h^2 \sqrt{F_R} \varepsilon^2 +O(\varepsilon^3), \\
\langle N_5,\cos\theta \rangle=& O(\varepsilon^3),\\
\langle N_6,\cos\theta \rangle=&-4 A_1 \sigma_2^{-1} h^2 \sqrt{F_R}\varepsilon^2 \int \cos ^2 \theta \sin ^2 \theta -24 A_0 \sigma_2^{-1} C^* h^2 \sqrt{F_R}\varepsilon^2 \int \cos ^2 \theta \sin ^2 \theta+O(\varepsilon^3) \\
=&-\pi A_1 \sigma_2^{-1} h^2 \sqrt{F_R} \varepsilon^2  -6\pi C^* A_0 \sigma_2^{-1} h^2 \sqrt{F_R} \varepsilon^2 +O(\varepsilon^3),\\
\langle N_7,\cos\theta \rangle=&8 A_0 \sigma_2^{-1}   \frac{M_4}{8}  h^2 \sqrt{F_R} \varepsilon^2\int \cos ^2 \theta \sin ^2 \theta- 96t^*   C^* A_0^2 \sigma_2^{-1}   \varepsilon^2   \int  \cos^2\theta \sin^2 \theta\\
&-24  C^* A_0 \sigma_2^{-1}h^2 \sqrt{F_R}\varepsilon^2\int \cos^2 \theta \sin ^2 \theta+O(\varepsilon^3)\\
=& \frac{\pi}{4} A_0 \sigma_2^{-1}    M_4   h^2 \sqrt{F_R} \varepsilon^2 - 24\pi t^*   C^* A_0^2 \sigma_2^{-1}   \varepsilon^2    -6\pi  C^* A_0 \sigma_2^{-1}h^2 \sqrt{F_R}\varepsilon^2+O(\varepsilon^3), \\
\langle N_8,\cos\theta \rangle=&2\pi R \sigma_1 \sigma_2^{-2} A_0^2\varepsilon + 6\pi t^*R \sigma_1 \sigma_2^{-2} A_0^2\varepsilon^2+O(\varepsilon^3) ,\\
\langle N_9,\cos\theta \rangle=&-8 R \sigma_1 \sigma_2^{-2} A_0 h^2 \sqrt{F_R} \varepsilon^2\int \cos ^2 \theta \sin ^2 \theta+O(\varepsilon^3)=-2\pi R \sigma_1 \sigma_2^{-2} A_0 h^2 \sqrt{F_R} \varepsilon^2 +O(\varepsilon^3),\\
\langle N_{10},\cos\theta \rangle=& O(\varepsilon^3),\\
\langle N_{11},\cos\theta \rangle=&-2\pi R  \sigma_1^{-1} A_0^2\varepsilon - 6\pi t^*R   \sigma_1^{-1} A_0^2\varepsilon^2  +O(\varepsilon^3),\\
\langle N_{12},\cos\theta \rangle=&4 R \sigma_1^{-1}   A_0 h^2 \sqrt{F_R} \varepsilon^2\int \cos ^2 \theta \sin ^2 \theta+O(\varepsilon^3)=\pi R \sigma_1^{-1}   A_0 h^2 \sqrt{F_R} \varepsilon^2  +O(\varepsilon^3),\\
\langle N_{13},\cos\theta \rangle=&4 R \sigma_1^{-1}   A_0 h^2 \sqrt{F_R} \varepsilon^2\int \cos ^2 \theta \sin ^2 \theta+O(\varepsilon^3)=\pi R \sigma_1^{-1}   A_0 h^2 \sqrt{F_R} \varepsilon^2  +O(\varepsilon^3),\\
\langle N_{14},\cos\theta \rangle=&O(\varepsilon^3) ,
\end{align*} 
where we have used that $ \int \cos^4 \theta \, d\theta=\frac{3\pi}{4} $, $ \int \cos^2 \theta \sin ^2 \theta\, d\theta=\frac{\pi}{4} $, $ \int \cos^2 \theta \cos2\theta\, d\theta =\frac{\pi}{2} $ and $ \int \cos n \theta \cos m\theta\, d\theta=\pi\delta_{n,m} $ for any $ n,m\in\mathbb{Z} $. Here $ \delta_{n,m}=1 $ for $ n=m $ and $ \delta_{n,m}=0 $ otherwise. 

Taking the above formulas into Equation \eqref{eq:exp of F_0} and using the definition of $ A_0,A_1 $ and $ M_4 $,  the  order $ \varepsilon^3  $ expansion  of $ c_{\varepsilon,B^*,1} $ is given by
\begin{align*}
\begin{split}
c_{\varepsilon,B^*,1}=&\left( 8\pi A_0 A_1\sigma_2^{-1} -2\pi R^3 A_0^2\sigma_1^{-1}\sigma_2^{-1}\right)  \varepsilon + \biggl(32\pi t^* A_0A_1 \sigma_2^{-1}-3\pi A_1 \sigma_2^{-1} h^2 \sqrt{F_R}  \\
&+6\pi C^* A_0 \sigma_2^{-1} h^2 \sqrt{F_R}+\frac{3\pi}{4}\sigma_2^{-1} A_0 M_4   h^2 \sqrt{F_R}   + 24 \pi t^*  C^*\sigma_2^{-1} A_0^2    -8\pi t^*\sigma_2^{-1} A_0   A_1  \\
&+ 6\pi C^*  \sigma_2^{-1} A_0  h^2 \sqrt{F_R}-\pi A_1 \sigma_2^{-1} h^2 \sqrt{F_R}   -6\pi C^* A_0 \sigma_2^{-1} h^2 \sqrt{F_R}+\frac{\pi}{4} A_0 \sigma_2^{-1}    M_4   h^2 \sqrt{F_R}\\
&  - 24\pi t^*   C^* A_0^2 \sigma_2^{-1}       -6\pi  C^* A_0 \sigma_2^{-1}h^2 \sqrt{F_R}+ 6\pi t^*R \sigma_1 \sigma_2^{-2} A_0^2-2\pi R \sigma_1 \sigma_2^{-2} A_0 h^2 \sqrt{F_R}\\
&- 6\pi t^*R   \sigma_1^{-1} A_0^2+2\pi R \sigma_1^{-1}   A_0 h^2 \sqrt{F_R}\biggr)\varepsilon^2+O(\varepsilon^3)\\
=&\left( 8\pi A_0 A_1\sigma_2^{-1} -2\pi R^3 A_0^2\sigma_1^{-1}\sigma_2^{-1}\right)  \varepsilon+\biggl( 6\pi t^* A_0\left( 4A_1 \sigma_2^{-1}+    R \sigma_1 \sigma_2^{-2} A_0  - R   \sigma_1^{-1} A_0 \right)    \\
&  +\pi h^2 \sqrt{F_R}\left (-4 A_1 \sigma_2^{-1}+  \sigma_2^{-1} A_0 M_4-2  R \sigma_1 \sigma_2^{-2} A_0+   2  R \sigma_1^{-1}   A_0\right ) \biggr)\varepsilon^2+O(\varepsilon^3)\\
=&\left( 8\pi A_0 A_1\sigma_2^{-1} -2\pi R^3 A_0^2\sigma_1^{-1}\sigma_2^{-1}\right)  \varepsilon+\left( -\frac{\pi }{2}R\sigma_1ch^2\sqrt{F_R}+\frac{15}{2}\pi RA_0\sigma_1ct^*\right) \varepsilon^2+O(\varepsilon^3).
\end{split}
\end{align*}
By comparing the order $ \varepsilon^2 $-term of Equation \eqref{eq:C3-integral}, it is then immediate   that  
\begin{equation*}
\bigl(8A_0A_1\sigma_2^{-1}-2R^3A_0^2\sigma_1^{-1}\sigma_2^{-1}\bigr)\pi t^*+2\pi R\sigma_1C_3
=-\frac{\pi }{2}R\sigma_1ch^2\sqrt{F_R}+\frac{15}{2}\pi RA_0\sigma_1ct^*.
\end{equation*}
Hence
\begin{equation}\label{eq:C3-final}
C_3=\frac{5}{2}A_0ct^*-\frac c4h^2\sqrt{F_R}.
\end{equation}
Recall $ C^*=\frac{R^3\sigma_1^{-1}}8 $ and $ t^*=\frac{5}{18A_0}h^2\sqrt{F_R} $. Substituting \eqref{eq:C3-final} into \eqref{eq:Fboundary-6A3} gives
\begin{equation}\label{eq:Fe-final-before-t}
\begin{split}
\mathcal F& (\varepsilon,B^*)\\
&=\kappa+ \left( 8 A_0^2 \sigma_2^{-1}  (B^*-\Lambda_0B^*)+2R^3A_0^2\sigma_1^{-1}\sigma_2^{-1} \cos3\theta -4A_0\sigma_2^{-1}  h^2\sqrt{F_R} 
+C_3\sigma_2\right) \varepsilon+O(\varepsilon^2)\\
&=\kappa+\left( 8A_0^2\sigma_2^{-1}  t^*-4A_0\sigma_2^{-1}  h^2\sqrt{F_R}
+\left(\frac{5}{2}A_0ct^*-\frac c4h^2\sqrt{F_R}\right)\sigma_2\right) \varepsilon+O(\varepsilon^2)\notag\\
&=\kappa+\varepsilon\left(\frac92A_0\sigma_2ct^*-\frac54\sigma_2ch^2\sqrt{F_R}\right)+O(\varepsilon^2)\\
&=\kappa +O(\varepsilon^2),
\end{split}
\end{equation}
as claimed.

\end{proof}

\begin{remark}\label{rem:third-mode-choice}
	The third Fourier mode appears because the geometric error contains $\cos3\theta$.  The term $B-\Lambda_0B$ multiplies the mode $\cos n\theta$ by $1-n$.  For $n=3$ this multiplier is $-2$, so a single $\cos3\theta$ correction cancels the geometric third mode.
\end{remark}
 
 \section{Computing the variations with respect to the domain}\label{sec:variation}
Our next objective is to compute how $\phi$ changes as we change the domain by perturbing the function $B$.
More precisely, we aim to compute the derivative of $\phi$ with respect to $B$, which we will denote as
\[
\Phi_{\varepsilon,B,\mathbf{B}} :=
\frac{d}{d t}\bigg|_{t=0} \phi_{\varepsilon,B+t\mathbf{B}} ,
\]
where $\mathbf{B}(\theta)$ is a function defined on $\mathbb{T}$.
In this section, we are mainly interested in the derivative at $B=B^*$,   $\Phi_{\varepsilon,B^*,\mathbf{B}}$.

Let us first list some Fourier--Poisson calculus which is frequently used in this section. Here and in what follows, $\operatorname{sgn}(n) := n/|n|$ is the sign of the nonzero integer $n$.

\begin{lemma} \label{lem:poisson-calculus}
Let $\textbf{B}(\theta)=\sum_{n\in\mathbb Z}B_ne^{in\theta}\in C^{s+1}(\mathbb T)$.  The Cartesian derivatives of $ P_0\textbf{B} $ up to second order are given by 
\begin{equation}\label{eq:Poisson-derivatives1}
\begin{split}
\partial_xP_0\textbf{B}&=\sum_{n\in\mathbb Z\setminus\{0\}}|n|B_n\rho^{|n|-1}e^{i(n-\text{sgn}(n))\theta},\\
\partial_yP_0\textbf{B}&=\sum_{n\in\mathbb Z\setminus\{0\}}\text{sgn}(n)i|n|B_n\rho^{|n|-1}e^{i(n-\text{sgn}(n))\theta},\\
\partial_{xx}P_0\textbf{B}&=\sum_{n\in\mathbb Z\setminus\{0,\pm1\}}|n|(|n|-1)B_n\rho^{|n|-2}e^{i(n-2\text{sgn}(n))\theta},\\
\partial_{xy}P_0\textbf{B}&=\sum_{n\in\mathbb Z\setminus\{0,\pm1\}}\text{sgn}(n)i|n|(|n|-1)B_n\rho^{|n|-2}e^{i(n-2\text{sgn}(n))\theta},\\
\partial_{yy}P_0\textbf{B}&=-\sum_{n\in\mathbb Z\setminus\{0,\pm1\}}|n|(|n|-1)B_n\rho^{|n|-2}e^{i(n-2\text{sgn}(n))\theta}.
\end{split}
\end{equation}

\end{lemma}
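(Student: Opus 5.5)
The plan is a direct computation. Since $\mathbf{B}\in C^{s+1}(\mathbb T)$ with $s>2$, its Fourier coefficients satisfy $|B_n|\lesssim |n|^{-(s+1)}$, so the series for $P_0\mathbf B$ and all the series obtained by termwise differentiation up to second order converge absolutely and locally uniformly in the open disk. It therefore suffices to prove the formulas for a single mode $B_n\rho^{|n|}e^{in\theta}$. Moreover, the $n=0$ mode is constant and contributes nothing, which explains why $n=0$ is excluded in the first-order formulas.

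The key step is to write the harmonic extension in complex form. Setting $z=x+iy=\rho e^{i\theta}$, we have $\rho^{n}e^{in\theta}=z^n$ for $n>0$ and $\rho^{|n|}e^{in\theta}=\bar z^{|n|}$ for $n<0$. We then use $\partial_x=\partial_z+\partial_{\bar z}$ and $\partial_y=i(\partial_z-\partial_{\bar z})$.

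For $n>0$ this gives
\[
\partial_x z^n=nz^{n-1}=n\rho^{n-1}e^{i(n-1)\theta},\qquad \partial_y z^n=inz^{n-1}.
\]
For $n<0$, writing $m=|n|$, it gives
\[
\partial_x\bar z^{m}=m\bar z^{m-1}=|n|\rho^{|n|-1}e^{i(n+1)\theta},\qquad \partial_y \bar z^m=-im\bar z^{m-1}.
\]
Both cases are captured by the factor $\mathrm{sgn}(n)$ and the phase shift $e^{-i\,\mathrm{sgn}(n)\theta}$, which yields the first two lines of \eqref{eq:Poisson-derivatives1}.

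Applying the same rules once more gives the second derivatives:
\[
\partial_{xx}z^n=n(n-1)z^{n-2},\qquad \partial_{xy}z^n=in(n-1)z^{n-2},\qquad \partial_{yy}z^n=-n(n-1)z^{n-2}.
\]
The analogous formulas for $\bar z^m$ carry the sign $-i$ in the mixed derivative, and $\partial_{yy}\bar z^m=(-i)^2m(m-1)\bar z^{m-2}$ again gives the minus sign. The modes $n=\pm1$ are linear, so their second derivatives vanish; together with the vanishing $n=0$ mode this accounts for the excluded indices $\{0,\pm1\}$ in the second-order formulas.

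There is no real obstacle here. The only point needing care is the bookkeeping of signs for negative $n$, i.e., checking that $\mathrm{sgn}(n)$ and the shift $n-2\,\mathrm{sgn}(n)$ are correct, together with justifying termwise differentiation. Termwise differentiation is legitimate in the interior by the decay of the $B_n$, and the formulas extend up to $\rho=1$ in the sense of the $C^{s-1}$ boundary values whenever those series converge, which is guaranteed by $s>2$.
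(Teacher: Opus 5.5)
Your proof is correct and is essentially the paper's argument. The paper writes $\partial_x=\tfrac12 e^{i\theta}(\partial_\rho+\tfrac{i}{\rho}\partial_\theta)+\tfrac12 e^{-i\theta}(\partial_\rho-\tfrac{i}{\rho}\partial_\theta)$, which is exactly $\partial_{\bar z}+\partial_z$ in polar coordinates, and then checks the formulas mode by mode for $n>0$ and $n<0$, as you do with $z^n$ and $\bar z^{|n|}$. The only real difference is how termwise differentiation is justified: the paper appeals to density and Schauder estimates, while you use the decay $|B_n|\lesssim |n|^{-(s+1)}$. Your route is, if anything, cleaner, since it gives $\sum_n n^2|B_n|<\infty$ for $s>2$ and hence uniform convergence of the differentiated series on the closed disk.
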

\begin{proof}
For $ \textbf{B}(\theta)=\sum_{n\in\mathbb Z}B_ne^{in\theta}\in C^{s+1}(\mathbb T), $
the Poisson extension in the unit disk is $ P_0\textbf{B}(\rho,\theta)=\sum_{n\in\mathbb Z}B_n\rho^{|n|}e^{in\theta}. $
In polar coordinates $x=\rho\cos\theta$, $y=\rho\sin\theta$,
\begin{equation*}\label{eq:px-polar}
\begin{split}
\partial_x&=\cos\theta\partial_\rho-\rho^{-1}\sin\theta\partial_\theta
=\frac12e^{i\theta}\left(\partial_\rho+\frac i\rho\partial_\theta\right)
+\frac12e^{-i\theta}\left(\partial_\rho-\frac i\rho\partial_\theta\right),\\
\partial_y&=\sin\theta\partial_\rho+\rho^{-1}\cos\theta\partial_\theta
=\frac1{2i}e^{i\theta}\left(\partial_\rho+\frac i\rho\partial_\theta\right)
-\frac1{2i}e^{-i\theta}\left(\partial_\rho-\frac i\rho\partial_\theta\right). 
\end{split}
\end{equation*}
Also
\begin{equation*}\label{eq:x-y-Fourier}
x=\frac\rho2(e^{i\theta}+e^{-i\theta}),
\qquad y=\frac\rho{2i}(e^{i\theta}-e^{-i\theta}).
\end{equation*}
For $ \partial_xP_0\textbf{B} $, the mode-by-mode derivation is the following.  If $n>0$,
\begin{align*}\label{eq:px-mode-positive-full}
\partial_x(B_n\rho^ne^{in\theta})
&=\frac{e^{i\theta}+e^{-i\theta}}{2}nB_n\rho^{n-1}e^{in\theta}
-\rho^{-1}\frac{e^{i\theta}-e^{-i\theta}}{2i}inB_n\rho^ne^{in\theta}\notag\\
&=nB_n\rho^{n-1}e^{i(n-1)\theta},
\end{align*}
and if $n<0$,
\begin{equation*}\label{eq:px-mode-negative-full}
\partial_x(B_n\rho^{|n|}e^{in\theta})=|n|B_n\rho^{|n|-1}e^{i(n+1)\theta}.
\end{equation*}
So the first formula of \eqref{eq:Poisson-derivatives1} holds term by term for trigonometric polynomials and can be extended to $C^{s+1}$ data by density and  Schauder estimates. Similarly, for $ \partial_yP_0\textbf{B} $ we have
\begin{equation*}\label{eq:py-mode-positive-full}
\begin{split}
\partial_y(B_n\rho^ne^{in\theta})&=\frac{e^{i\theta}-e^{-i\theta}}{2i}nB_n\rho^{n-1}e^{in\theta}
+\rho^{-1}\frac{e^{i\theta}+e^{-i\theta}}{2}inB_n\rho^ne^{in\theta} \\
&=inB_n\rho^{n-1}e^{i(n-1)\theta},\qquad n>0,\\
\partial_y(B_n\rho^{|n|}e^{in\theta})&=-i|n|B_n\rho^{|n|-1}e^{i(n+1)\theta},\qquad n<0. 
\end{split}
\end{equation*}
So the second formula of \eqref{eq:Poisson-derivatives1} holds. For $ \partial_{xx}P_0\textbf{B},\partial_{xy}P_0\textbf{B}$ and $ \partial_{yy}P_0\textbf{B} $, applying $\partial_x$ or $\partial_y$ once more gives the result.  

\end{proof}
In the statement of the next proposition, we will need the operator
 $\mathcal{T} : C^{s+1}(\mathbb{T}) \to C^{s}(\mathbb{D})$ defined as
\begin{align*}
\mathcal{T}f(\rho,\theta):=&\sum_{|n|\ge3}\frac{(|n|-1)(I_0-I_2+I_1)+2I_3}{4}f_n
(\rho^{|n|-1}-\rho^{|n|+1})e^{i(n-\text{sgn} (n))\theta}\notag\\
&+\sum_{|n|\ge3}\frac{|n|(I_0-I_2-I_1)}{8}f_n
(\rho^{|n|-3}-\rho^{|n|+1})e^{i(n-3\text{sgn} (n))\theta}\notag\\
&+\frac{I_0-I_2+I_1+2I_3}{4}(f_2e^{i\theta}+f_{-2}e^{-i\theta})(\rho-\rho^3)\notag\\
&+\frac{I_0-I_2-I_1}{4}(f_2e^{-i\theta}+f_{-2}e^{i\theta})(\rho-\rho^3)
+\frac12I_3(f_1+f_{-1})(1-\rho^2),\notag
\end{align*}
 for $f(\theta) = \sum_{n\in\mathbb{Z}} f_n e^{in\theta}$. Here $ I_0,I_1,I_2 $ and $ I_3 $ are constants defined as 
\begin{equation*} 
I_0:=-2R\sigma_1^{-1}h^2,
\quad I_1:=2R\sigma_1^{-1}\sigma_2,
\quad I_2:=-4R\sigma_1\sigma_2^{-1},
\quad I_3:=(R^2+3h^2)R\sigma_1^{-1}.
\end{equation*}
 
\begin{proposition}\label{prop:variation}
	For $ \textbf{B} \in C^{s+1}(\mathbb{T})$ and any small enough $\varepsilon$,
\begin{equation*} 
\begin{split}
\Phi_{\varepsilon,B^*,\mathbf{B}} =& -2\varepsilon A_0 P_{\varepsilon B^*} \textbf{B}+ \varepsilon^2 \Bigl[ A_0\mathcal{T} \textbf{B}
-2A_1P_{\varepsilon B^*}(\cos\theta\, \textbf{B}) +h^2\sqrt{F_R}P_{\varepsilon B^*}  \textbf{B}\\
&\hspace{8em}
 +4A_0C^*P_{\varepsilon B^*}(\cos3\theta\, \textbf{B})-2A_0t^*P_{\varepsilon B^*} \textbf{B} \Bigr]
+ O(\varepsilon^3) .
\end{split}
\end{equation*}
Moreover, its first-order derivative   satisfies
\begin{align*} 
\partial_x\Phi_{\varepsilon,B^*, \textbf{B}}
=&-2\varepsilon A_0\partial_xP_{\varepsilon B^*}  \textbf{B}
+\varepsilon^2\Big[A_0\partial_x\mathcal{T} \textbf{B}-2A_1\partial_xP_{\varepsilon B^*}(  \cos\theta\, \textbf{B}) +h^2\sqrt{F_R}\partial_xP_{\varepsilon B^*}  \textbf{B}\\
&\hspace{9em}
+4A_0C^*\partial_xP_{\varepsilon B^*}(\cos3\theta\, \textbf{B})-2A_0t^*\partial_xP_{\varepsilon B^*} \textbf{B} \Bigr]
+ O(\varepsilon^3),\notag\\
\partial_y\Phi_{\varepsilon,B^*, \textbf{B}}
=&-2\varepsilon A_0\partial_yP_{\varepsilon B^*}  \textbf{B}
+\varepsilon^2\Big[A_0\partial_y\mathcal{T} \textbf{B}-2A_1\partial_yP_{\varepsilon B^*}(  \cos\theta\, \textbf{B}) +h^2\sqrt{F_R}\partial_yP_{\varepsilon B^*}  \textbf{B}\\
&\hspace{9em}
+4A_0C^*\partial_yP_{\varepsilon B^*}(\cos3\theta\, \textbf{B})-2A_0t^*\partial_yP_{\varepsilon B^*} \textbf{B} \Bigr]
+ O(\varepsilon^3).\notag 
\end{align*}

\end{proposition}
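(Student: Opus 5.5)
We differentiate the Dirichlet problem of Proposition~\ref{prop:Dirichlet} with respect to $B$ in the direction $\mathbf{B}$, working on the fixed disk through $\Xi_{\varepsilon,B}$. The implicit function theorem there gives $C^1$ dependence, so $\Phi:=\Phi_{\varepsilon,B^*,\mathbf B}$ is well defined. Formally, $\Phi$ satisfies the linearized equation
\[
\Delta\Phi=\varepsilon\bigl(I_0x\partial_{xx}\Phi+I_1y\partial_{xy}\Phi+I_2x\partial_{yy}\Phi+I_3\partial_x\Phi\bigr)+D_\phi\mathcal R(\varepsilon,\phi_{\varepsilon,B^*})\Phi
\]
in $\varOmega_{\varepsilon,B^*}$. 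Here we used that the $\varepsilon$-coefficients of \eqref{eq:phi} are exactly $I_0,\dots,I_3$. Differentiating \eqref{eq:Dirichlet_varphi} in $t$ gives the boundary condition
\[
\Phi(1+\varepsilon B^*,\theta)=-\varepsilon\mathbf B(\theta)\,\partial_\rho\phi_{\varepsilon,B^*}(1+\varepsilon B^*,\theta).
\]
Since $D_\phi\mathcal R=O(\varepsilon^2)$ as an operator $C^{s+1}\to C^{s-1}$, and $\Phi=O(\varepsilon)$ by Schauder estimates, the last term contributes only $O(\varepsilon^3)$.

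\textbf{Boundary datum.} Insert \eqref{eq:prho-phi} with $B=B^*$. Since $\Lambda_0B^*=3C^*\cos3\theta$, we have $B^*-\Lambda_0B^*=-2C^*\cos3\theta+t^*$. The datum then becomes
\[
-2\varepsilon A_0\mathbf B+\varepsilon^2\bigl[4A_0C^*\cos3\theta-2A_0t^*-2A_1\cos\theta+h^2\sqrt{F_R}\bigr]\mathbf B+O(\varepsilon^3).
\]
This exactly reproduces the coefficients in the statement.

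\textbf{Splitting $\Phi$.} Write $\Phi=P_{\varepsilon B^*}(\text{boundary datum})+\varepsilon^2 W+O(\varepsilon^3)$. The correction $W$ vanishes on $\partial\varOmega_{\varepsilon,B^*}$ and solves
\[
\Delta W=\varepsilon^{-1}\bigl(I_0x\partial_{xx}+I_1y\partial_{xy}+I_2x\partial_{yy}+I_3\partial_x\bigr)\Phi+O(\varepsilon).
\]
At leading order the right-hand side is $\mathcal L_1(-2A_0P_0\mathbf B)$, where $\mathcal L_1$ denotes that first-order operator. By the Schauder estimate on domains $C^{s+1}$-close to the disk, we may replace the domain by $\mathbb D$ with an $O(\varepsilon)$ error. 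It therefore suffices to show that the Dirichlet solution $W_0$ on $\mathbb D$ of $\Delta W_0=-2A_0\mathcal L_1P_0\mathbf B$, $W_0|_{\partial\mathbb D}=0$, equals $A_0\mathcal T\mathbf B$.

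\textbf{Identifying $W_0$ with $A_0\mathcal T\mathbf B$.} This is the main computational step and is mode-by-mode.

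1. Use Lemma~\ref{lem:poisson-calculus} to write $\partial_{xx},\partial_{xy},\partial_{yy},\partial_x$ of $P_0\mathbf B$. The first-order term produces $\rho^{|n|-1}e^{i(n-\mathrm{sgn}\,n)\theta}$. The second-order terms are multiplied by $x$ or $y$, which are written as $\frac\rho2(e^{i\theta}\pm e^{-i\theta})$ up to constants. These generate $\rho^{|n|-1}$ times $e^{i(n-\mathrm{sgn}\,n)\theta}$ and $e^{i(n-3\mathrm{sgn}\,n)\theta}$, with coefficients that are linear combinations of $I_0-I_2\pm I_1$ and $I_3$.

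2. For each source $\rho^{k}e^{im\theta}$, solve $\Delta u=\rho^ke^{im\theta}$ with $u|_{\rho=1}=0$. Take the particular solution $\rho^{k+2}e^{im\theta}/((k+2)^2-m^2)$ and subtract the harmonic function $\rho^{|m|}e^{im\theta}/((k+2)^2-m^2)$.
 For $m=n-\mathrm{sgn}\,n$ and $k=|n|-1$, this yields $(\rho^{|n|+1}-\rho^{|n|-1})/(4|n|)$.
 For $m=n-3\mathrm{sgn}\,n$, it yields $(\rho^{|n|+1}-\rho^{|n|-3})/(8(|n|-1))$.

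3. Multiply by $-2A_0$ and collect terms; this should give the two series in $\mathcal T$. The low modes need separate care. For $|n|=2$, $n-3\,\mathrm{sgn}\,n$ flips sign, giving the $f_{\mp2}e^{\pm i\theta}$ term. For $|n|=1$, only $I_3$ survives, giving the constant source and $\frac12I_3(1-\rho^2)$. The $n=0$ mode gives nothing.

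I expect this bookkeeping of signs and low modes to be the main obstacle. I would check it first on $\mathbf B=\cos n\theta$ for $n=1,2,3$.

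\textbf{Derivative formulas.} These follow by applying $\partial_x,\partial_y$ to the expansion. This is legitimate because all the estimates above hold in $C^{s+1}$, so the $O(\varepsilon^3)$ remainder stays $O(\varepsilon^3)$ in $C^s$. The derivative formulas are what is needed on the boundary.
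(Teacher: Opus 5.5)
Your proposal is correct and follows essentially the same route as the paper: linearize at $B^*$, read off the boundary datum from \eqref{eq:prho-phi} using $\Lambda_0B^*=3C^*\cos3\theta$, split $\Phi$ into the Poisson extension of that datum plus an $O(\varepsilon^2)$ correction with zero boundary values, and identify that correction mode by mode with $A_0\mathcal T\mathbf B$. The paper writes this function down directly as $\Phi_2$ and checks $\Delta\Phi_2=D_0$ with $\Phi_2=O(\varepsilon)$ on $\rho=1+\varepsilon B^*$; this is the same computation as your Dirichlet solves on the disk, including the special modes $|n|=1,2$, and the derivative formulas are likewise obtained by differentiating the expansion.
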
 
\begin{proof}

Differentiating Equation \eqref{eq:phi-0}  with respect to $ B $ at $ B = B^* $, we obtain that
$ \Phi=\Phi_{\varepsilon,B^*,\mathbf{B}} $ satisfies the equation 
 \begin{align}\label{eq:Phi}
 &\bigl((R+\varepsilon\sigma_1x)^2+(\varepsilon\sigma_2y)^2+h^2\bigr)\bigl(h^2+(\varepsilon\sigma_2y)^2\bigr)\sigma_1^{-2}\partial_{xx}\Phi \notag\\
 &  -2\bigl((R+\varepsilon\sigma_1x)^2+(\varepsilon\sigma_2y)^2+h^2\bigr)(R+\varepsilon\sigma_1x)(\varepsilon\sigma_2y)
 \sigma_1^{-1}\sigma_2^{-1}\partial_{xy}\Phi \notag\\
 & +\bigl((R+\varepsilon\sigma_1x)^2+(\varepsilon\sigma_2y)^2+h^2\bigr)\bigl(h^2+(R+\varepsilon\sigma_1x)^2\bigr)\sigma_2^{-2}\partial_{yy}\Phi \notag\\
 =&\varepsilon\bigl((R+\varepsilon\sigma_1x)^2+(\varepsilon\sigma_2y)^2+3h^2\bigr)\bigl((R+\varepsilon\sigma_1x)\sigma_1^{-1}\partial_x\Phi+\varepsilon y\partial_y\Phi\bigr)\notag\\
 &
 - 2\varepsilon^2h^2
 F'(\varepsilon^2\phi_{\varepsilon,B^*})\Phi-\varepsilon^2\bigl((R+\varepsilon\sigma_1x)^2+(\varepsilon\sigma_2y)^2+h^2\bigr)
 \left(\frac{F^2}{2}\right)''(\varepsilon^2\phi_{\varepsilon,B^*})\Phi \notag\\
 & +\varepsilon^2\bigl((R+\varepsilon\sigma_1x)^2+(\varepsilon\sigma_2y)^2+h^2\bigr)^2
 H''(\varepsilon^2\phi_{\varepsilon,B^*})\Phi.
 \end{align}
Using the fact that
\begin{equation*}\label{eq:low-regularity-FH}
F'(\varepsilon^2\phi_{\varepsilon,B^*})=O(\varepsilon),
\qquad \left(\frac{F^2}{2}\right)''(\varepsilon^2\phi_{\varepsilon,B^*})=O(1),
\qquad H''(\varepsilon^2\phi_{\varepsilon,B^*})=O(1),
\end{equation*}
Equation \eqref{eq:Phi} can be rewritten as 
\begin{equation*}\label{eq:Phi-expanded-coefficients}
\begin{split}
&\bigl(1+2R\varepsilon\sigma_1^{-1}xh^2+O(\varepsilon^2)\bigr)\partial_{xx}\Phi
-2\bigl(\sigma_2R\varepsilon\sigma_1^{-1}y+O(\varepsilon^2)\bigr)\partial_{xy}\Phi
+\bigl(1+4R\varepsilon\sigma_1\sigma_2^{-1}x+O(\varepsilon^2)\bigr)\partial_{yy}\Phi\\
&\qquad=\varepsilon\bigl(R^2+3h^2+O(\varepsilon)\bigr)\left( R\sigma_1^{-1}\partial_x\Phi
+O(\varepsilon)(\partial_x\Phi+\partial_y\Phi)\right) +O(\varepsilon^2)\Phi,
\end{split}
\end{equation*}
which implies that
\begin{align}\label{eq:Phi-expanded}
\Delta\Phi={}&\varepsilon\Big[-2R\sigma_1^{-1}h^2x\partial_{xx}\Phi
+2 R\sigma_1^{-1}\sigma_2y\partial_{xy}\Phi
-4R\sigma_1\sigma_2^{-1}x\partial_{yy}\Phi +(R^2+3h^2)R\sigma_1^{-1}\partial_x\Phi\Big] \notag\\
&
+O(\varepsilon^2)\left (\Phi+\partial_{x}\Phi+\partial_{y}\Phi+\partial_{xx}\Phi+\partial_{xy}\Phi+\partial_{yy}\Phi\right )
\end{align} 
in $ \varOmega_{\varepsilon,B^*} $. 	Likewise, differentiating the boundary condition we obtain that 
\begin{equation*} 
 \Phi(1+\varepsilon B^*(\theta),\theta)
=-\varepsilon\partial_\rho\phi_{\varepsilon,B^*}(1+\varepsilon B^*(\theta),\theta)\mathbf{B}(\theta) .
\end{equation*}
In view of the asymptotics for $ \phi_{\varepsilon,B^*}$ computed in Proposition~\ref{prop:first-order-dirichlet-expansion} and formula \eqref{eq:prho-phi}, this boundary condition can be written as 
 \begin{align*}
 &\Phi(1+\varepsilon B^*(\theta),\theta)\notag\\
 &\quad=-\varepsilon \textbf{B}(\theta)\bigl(2A_0+2\varepsilon A_1\cos\theta-\varepsilon h^2\sqrt{F_R}-6C^*A_0\varepsilon \cos3\theta+2A_0\varepsilon B^*(\theta)+O(\varepsilon^2)\bigr)\notag\\
 &\quad=-2\varepsilon A_0  \textbf{B}-2\varepsilon^2A_1\cos\theta\, \textbf{B}
 +\varepsilon^2h^2\sqrt{F_R} \textbf{B}+4\varepsilon^2C^*A_0\cos3\theta\, \textbf{B}-2\varepsilon^2A_0 t^*\textbf{B}+O(\varepsilon^3),
 \end{align*}
and  $ \Phi $ has the expression
 \begin{equation*}\label{eq:Phi-leading}
 \Phi=-2\varepsilon A_0P_{\varepsilon B^*} \textbf{B}+O(\varepsilon^2).
 \end{equation*}
 Equation~\eqref{eq:Phi-expanded} is then of the form
\begin{align*} 
\Delta\Phi&=\varepsilon\left [-2R\sigma_1^{-1}h^2x\partial_{xx}\Phi
+2 R\sigma_1^{-1}\sigma_2y\partial_{xy}\Phi
-4R\sigma_1\sigma_2^{-1}x\partial_{yy}\Phi +(R^2+3h^2)R\sigma_1^{-1}\partial_x\Phi\right ]  
+O(\varepsilon^3)\\
&=\varepsilon\left [I_0x\partial_{xx}\Phi
+I_1y\partial_{xy}\Phi
+I_2x\partial_{yy}\Phi +I_3\partial_x\Phi\right ]  
+O(\varepsilon^3).
\end{align*} 
Let us set  $ \Phi_1:=(\Phi-\varepsilon\Phi_0)/ \varepsilon^2 $, with $ \Phi_0:=-2A_0P_{\varepsilon B^*} \textbf{B}. $ A short calculation shows that $\Phi_1$ must solve the equation
 \begin{equation}\label{eq:Phi1-equation}
 \begin{split}
 \Delta\Phi_1=&I_0x\partial_{xx}\Phi_0+I_1y\partial_{xy}\Phi_0+I_2x\partial_{yy}\Phi_0+I_3\partial_x\Phi_0+O(\varepsilon)\\
=&-2A_0I_0x\partial_{xx}P_0\textbf{B}-2A_0I_1y\partial_{xy}P_0\textbf{B}-2A_0I_2x\partial_{yy}P_0\textbf{B}-2A_0I_3\partial_xP_0\textbf{B}+O(\varepsilon) 
 \end{split}
 \end{equation}
with the boundary condition  
 \begin{equation*} 
 \Phi_1(1+\varepsilon B^*,\theta)
 =-2A_1 \cos\theta\, \textbf{B}+h^2\sqrt{F_R} \textbf{B}+4 A_0C^*\cos3\theta\, \textbf{B}-2 A_0 t^*\textbf{B}+O(\varepsilon).
 \end{equation*}
From Lemma \ref{lem:poisson-calculus}, the Fourier   terms in  Equation \eqref{eq:Phi1-equation} are obtained from
 \begin{align*} 
 x\partial_{xx}P_0\textbf{B}
 &=\sum_{n\in\mathbb{Z}\setminus\{0,\pm1\}}|n|(|n|-1)B_n\rho^{|n|-1}
 \frac{e^{i(n+1-2\text{sgn}(n))\theta}+e^{i(n-1-2\text{sgn}(n))\theta}}{2},
\notag\\
 y\partial_{xy}P_0\textbf{B}
 &=\sum_{n\in\mathbb{Z}\setminus\{0,\pm1\}}\text{sgn}(n)|n|(|n|-1)B_n\rho^{|n|-1}
 \frac{e^{i(n+1-2\text{sgn}(n))\theta}-e^{i(n-1-2\text{sgn}(n))\theta}}{2},
\notag\\
 x\partial_{yy}P_0\textbf{B}
 &=-\sum_{n\in\mathbb{Z}\setminus\{0,\pm1\}}|n|(|n|-1)B_n\rho^{|n|-1}
 \frac{e^{i(n+1-2\text{sgn}(n))\theta}+e^{i(n-1-2\text{sgn}(n))\theta}}{2},
\notag\\
 \partial_xP_0\textbf{B}
 &=\sum_{n\in\mathbb{Z}\setminus\{0\}}|n|B_n\rho^{|n|-1}e^{i(n-\text{sgn}(n))\theta},
\notag
 \end{align*}
for $ \textbf{B}=\sum_{n\in\mathbb Z}B_ne^{in\theta} $.  Taking these into \eqref{eq:Phi1-equation}, we get
\begin{equation*} 
 \Delta\Phi_1=D_0+O(\varepsilon), 
\end{equation*}
where 
\begin{equation*} 
\begin{split}
D_0:=&-2A_0(I_0-I_2)\sum_{n\in\mathbb{Z}\setminus\{0,\pm1\}}|n|(|n|-1)B_n\rho^{|n|-1}
\frac{e^{i(n+1-2\text{sgn}(n))\theta}+e^{i(n-1-2\text{sgn}(n))\theta}}{2}\\
&-2A_0I_1\sum_{n\in\mathbb{Z}\setminus\{0,\pm1\}}\text{sgn}(n)|n|(|n|-1)B_n\rho^{|n|-1}
\frac{e^{i(n+1-2\text{sgn}(n))\theta}-e^{i(n-1-2\text{sgn}(n))\theta}}{2}\\
&-2A_0I_3\sum_{n\in\mathbb{Z}\setminus\{0\}}|n|B_n\rho^{|n|-1}e^{i(n-\text{sgn}(n))\theta}.
\end{split}
\end{equation*}
Let us note that  $ \Delta \frac{B_n}{(|n|+2)^2-m^2}\rho^{|n|+2}e^{im\theta}=B_n\rho^{|n|}e^{im\theta}  $, provided $(|n|+2)^2\ne m^2$. 
If we set  
 \begin{align}\label{eq:Phi2}
 \Phi_2:=&\sum_{|n|\ge3}\frac{(|n|-1)(I_0-I_2+I_1)+2I_3}{4}A_0B_n
 (\rho^{|n|-1}-\rho^{|n|+1})e^{i(n-\text{sgn} (n))\theta}\notag\\
 &+\sum_{|n|\ge3}\frac{|n|(I_0-I_2-I_1)}{8}A_0B_n
 (\rho^{|n|-3}-\rho^{|n|+1})e^{i(n-3\text{sgn} (n))\theta}\notag\\
 &+\frac{I_0-I_2+I_1+2I_3}{4}A_0(B_2e^{i\theta}+B_{-2}e^{-i\theta})(\rho-\rho^3)\notag\\
 &+\frac{I_0-I_2-I_1}{4}A_0(B_2e^{-i\theta}+B_{-2}e^{i\theta})(\rho-\rho^3)
 +\frac12I_3A_0(B_1+B_{-1})(1-\rho^2), 
 \end{align}
then direct computation shows that $  \Delta\Phi_2=D_0. $ 
 The boundary value of $\Phi_2$ is obtained as follows. From the Taylor formula $ \rho^m\big|_{\rho=1+\varepsilon B^*}=1+m\varepsilon B^*+O(\varepsilon^2) $, we have 
 \begin{equation*} 
 (\rho^{m_1}-\rho^{m_2})\big|_{\rho=1+\varepsilon B^*}
 =(m_1-m_2)\varepsilon B^*+O(\varepsilon^2).
 \end{equation*}
 Applying this formula to each term in Equation \eqref{eq:Phi2}, we get
 \begin{align*}\label{eq:Phi2-boundary-expanded}
 \Phi_2(1+\varepsilon B^*,\theta)={}&\varepsilon A_0B^*\sum_{|n|\ge3}
 \frac{(|n|-1)(I_0-I_2+I_1)+2I_3}{2}B_n e^{i(n-\text{sgn} (n))\theta}\notag\\
 &+\varepsilon A_0B^*\sum_{|n|\ge3}
 \frac{|n|(I_0-I_2-I_1)}{2}B_n e^{i(n-3\text{sgn} (n))\theta}\notag\\
 &+\varepsilon A_0B^*\frac{I_0-I_2+I_1+2I_3}{2}(B_2e^{i\theta}+B_{-2}e^{-i\theta}) \notag\\
 &+\varepsilon A_0B^*\frac{I_0-I_2-I_1}{2}(B_2e^{-i\theta}+B_{-2}e^{i\theta})
 +\varepsilon I_3A_0B^*(B_1+B_{-1})+O(\varepsilon^2)\\
 =&O(\varepsilon),
 \end{align*}
where we have used   $ ||\textbf{B}||_{C^s(\mathbb{T})}\leq 1 $ for $ s>2 $ and Schauder estimates.
Consequently, the function $ \Phi_3:=\Phi_1-\Phi_2 $ satisfies the equation
\begin{equation*} 
\Delta\Phi_3=O(\varepsilon)\quad\hbox{in }\varOmega_{\varepsilon,B^*}
\end{equation*} 
and the boundary condition
 \begin{align*} 
 \Phi_3(1+\varepsilon B^*,\theta)=&=-2A_1 \cos\theta\, \textbf{B}+h^2\sqrt{F_R} \textbf{B}+4 A_0C^*\cos3\theta\, \textbf{B}-2 A_0 t^*\textbf{B}+O(\varepsilon).
 \end{align*}
 This shows that
 \begin{align*}
 \Phi_3=-2A_1P_{\varepsilon B^*}(\cos\theta\, \textbf{B})
 +h^2\sqrt{F_R}P_{\varepsilon B^*}  \textbf{B}
 +4A_0C^*P_{\varepsilon B^*}(\cos3\theta\, \textbf{B})-2A_0t^*P_{\varepsilon B^*} \textbf{B}+O(\varepsilon),
 \end{align*}
and therefore
\begin{equation*} 
\begin{split}
\Phi_1=&\Phi_2+\Phi_3\\
=&-2A_1P_{\varepsilon B^*}(\cos\theta\, \textbf{B})
+h^2\sqrt{F_R}P_{\varepsilon B^*}  \textbf{B}
+4A_0C^*P_{\varepsilon B^*}(\cos3\theta\, \textbf{B})-2A_0t^*P_{\varepsilon B^*} \textbf{B}\\
&+\sum_{|n|\ge3}\frac{(|n|-1)(I_0-I_2+I_1)+2I_3}{4}A_0B_n
(\rho^{|n|-1}-\rho^{|n|+1})e^{i(n-\text{sgn} (n))\theta}\\
&+\sum_{|n|\ge3}\frac{|n|(I_0-I_2-I_1)}{8}A_0B_n
(\rho^{|n|-3}-\rho^{|n|+1})e^{i(n-3\text{sgn} (n))\theta}\\
&+\frac{I_0-I_2+I_1+2I_3}{4}A_0(B_2e^{i\theta}+B_{-2}e^{-i\theta})(\rho-\rho^3)\\
&+\frac{I_0-I_2-I_1}{4}A_0(B_2e^{-i\theta}+B_{-2}e^{i\theta})(\rho-\rho^3)
+\frac12I_3A_0(B_1+B_{-1})(1-\rho^2)+O(\varepsilon), 
\end{split}
\end{equation*} 
and $ \Phi=\varepsilon\Phi_0+\varepsilon^2\Phi_1 $. The derivative formulas are obtained by differentiating the expansion, as claimed.
\end{proof} 

As a consequence of Proposition~\ref{prop:variation}, we record that
\begin{align*}
\partial_\rho\Phi_{\varepsilon,B^*, \textbf{B}}|_{\rho=1+\varepsilon B^*}
=&-2\varepsilon A_0\Lambda_{\varepsilon,B^*} \textbf{B}
+\varepsilon^2\Big[-A_0\mathcal{T}' \textbf{B}-2A_1\Lambda_{\varepsilon,B^*}(\cos\theta\,  \textbf{B})
+h^2\sqrt{F_R}\Lambda_{\varepsilon,B^*} \textbf{B} \notag\\
&\hspace{8em}+4A_0C^*\Lambda_{\varepsilon,B^*}(\cos3\theta\, \textbf{B} )-2A_0t^*\Lambda_{\varepsilon,B^*} \textbf{B}\Big]
+O(\varepsilon^3),
\end{align*}
where $\mathcal{T}' : C^{s+1}(\mathbb{T}) \to C^{s}(\mathbb{T})$ is the operator defined as
\begin{align*}
\mathcal{T}' f:=&\sum_{|n|\ge3}\frac{(|n|-1)(I_0-I_2+I_1)+2I_3}{2}f_ne^{i(n-\text{sgn} (n))\theta} +\sum_{|n|\ge3}\frac{|n|(I_0-I_2-I_1)}{2}f_ne^{i(n-3\text{sgn} (n))\theta}\notag\\
&+\frac{I_0-I_2+I_1+2I_3}{2}(f_2e^{i\theta}+f_{-2}e^{-i\theta}) +\frac{I_0-I_2-I_1}{2}(f_2e^{-i\theta}+f_{-2}e^{i\theta})+I_3f_1+I_3f_{-1},
\end{align*}
for   $f(\theta) = \sum_{n\in\mathbb{Z}} f_n e^{in\theta}$. Notice that $ \mathcal{T}' f(\theta)=-\partial_\rho (\mathcal{T} f)(1,\theta). $

%


\section{Solvability of an overdetermined problem} \label{sec: invertibility}

It stems from Proposition~\ref{prop:F_expansion} that the function
\begin{equation}\label{eq:G_def}
\mathcal{G}(\varepsilon,B) := \frac1\varepsilon \bigl[ \mathcal{F}(\varepsilon,B) - \kappa \bigr]
\end{equation}
can be defined at $\varepsilon=0$ by continuity, so that $\mathcal{G}(0,B^*)=0$, resulting in a map defined for all $|\varepsilon|<\varepsilon_0$, where $\varepsilon_0$ is some positive constant.
Let us now define, for each non-integer $s>2$, the space
\[
X^s := \bigl\{ f \in C^s(\mathbb{T}) : f(\theta)=f(-\theta),\;
\langle f,\cos\theta\rangle = 0 \bigr\} .
\]
Near $B^*$ we define its ball of radius $1$
\begin{equation*}
X'_{s}(B^*)=\{B\in X^{s}:||{B-B^*}||_{C^{s}}\le 1\}.
\end{equation*}
As $\langle\mathcal{F}(\varepsilon,B),\cos\theta\rangle = 0$ by the definition of $c_{\varepsilon,B}$, and $\phi_{\varepsilon,B}$ is an even function if $B$ is (cf.\ Proposition~\ref{prop:Dirichlet}), our previous results then immediately imply
\begin{proposition}\label{prop:G}
	Given any $R>0$, there is some $\varepsilon_0>0$ such that the formula~\eqref{eq:G_def} defines a map
	\[
	\mathcal{G} : (-\varepsilon_0,\varepsilon_0) \times X'_{s+1}(B^*) \to X^s .
	\]
\end{proposition}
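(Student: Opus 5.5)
The plan is to check three things: that $\mathcal{G}(\varepsilon,B)$ is well defined for $B\in X'_{s+1}(B^*)$ and $|\varepsilon|<\varepsilon_0$, that it lands in $C^s(\mathbb{T})$, and that it is even and orthogonal to $\cos\theta$.

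\textbf{Well-definedness and regularity.} Elements of $X'_{s+1}(B^*)$ satisfy $\|B\|_{C^{s+1}}\le \|B^*\|_{C^{s+1}}+1$, and $B^*$ is a fixed trigonometric polynomial. So $B$ ranges over a bounded set of $C^{s+1}(\mathbb{T})$. Proposition~\ref{prop:Dirichlet} is stated for $\|B\|\le 1$, but only the product $\varepsilon B$ enters; rescaling $\varepsilon$ by this bound gives, for $\varepsilon_0$ small, a unique $\phi_{\varepsilon,B}\in C^{s+1}(\overline{\varOmega_{\varepsilon,B}})$. Its first derivatives are $C^s$ up to the boundary. The boundary $\rho=1+\varepsilon B(\theta)$ is parametrized by a $C^{s+1}$ function. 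Composing the coefficients in $\mathcal{F}_0$, which are polynomials in $x,y$, with this parametrization therefore gives a $C^s(\mathbb{T})$ function, because products and compositions are continuous in H\"older spaces for $s>2$.

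By \eqref{eq: c_2}, $c_{\varepsilon,B,2}=2\pi R\sigma_1\varepsilon+O(\varepsilon^2)$, uniformly in $B$. Proposition~\ref{prop:F_expansion} shows that $c_{\varepsilon,B,1}/c_{\varepsilon,B,2}$ extends continuously to $\varepsilon=0$. Hence $c_{\varepsilon,B}$ is a well-defined scalar, and $\mathcal{F}(\varepsilon,B)\in C^s(\mathbb{T})$.

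\textbf{Division by $\varepsilon$.} The same proposition gives $\mathcal{F}(\varepsilon,B)=\kappa+O(\varepsilon)$ in $C^s$, uniformly for $B$ in bounded sets. To make sense of $\mathcal{G}(0,B)$ as the $\varepsilon$-derivative, I would use the first-order expansion \eqref{eq: F_0} together with the formula for $c_{\varepsilon,B}$. This identifies $\mathcal{G}(0,B)$ explicitly as
\[
8A_0^2\sigma_2^{-1}(B-\Lambda_0B)+2R^3A_0^2\sigma_1^{-1}\sigma_2^{-1}\cos3\theta-4A_0\sigma_2^{-1}h^2\sqrt{F_R}+C_3(B)\sigma_2,
\]
which is exactly the structure computed in \eqref{eq:Fboundary-6A3}. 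Here the $O(\varepsilon)$ part of $c_{\varepsilon,B}$ supplies a constant $C_3(B)$.

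This is the step I expect to be the main obstacle, because of a loss of one derivative. The term $\Lambda_0 B$ lies only in $C^s$ when $B\in C^{s+1}$, which is precisely why the target space is $X^s$ and not $X^{s+1}$. One also needs the error terms in Proposition~\ref{prop:first-order-dirichlet-expansion} to be $O(\varepsilon^2)$ in the $C^s$ norm of boundary traces. I would obtain this by working on the fixed disk via $\Xi_{\varepsilon,B}$, as in the proof of Proposition~\ref{prop:Dirichlet}, where the implicit function theorem makes $\bar\phi_{\varepsilon,B}$ a $C^1$ function of $(\varepsilon,\varepsilon B)$ with values in $C^{s+1}(\mathbb{D})$.

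\textbf{Symmetry and orthogonality.} If $B$ is even, Proposition~\ref{prop:Dirichlet} gives that $\phi_{\varepsilon,B}$ is even. Then $\partial_x\phi$ is even and $\partial_y\phi$ is odd under $\theta\mapsto-\theta$, and $y$ is odd. Every term of $\mathcal{F}_0$ is invariant under this reflection: $(\partial_x\phi)^2$, $(\partial_y\phi)^2$ and $y^2$ are even, and $y\,\partial_x\phi\,\partial_y\phi$ is a product of two odd factors and one even factor. The subtracted term $c_{\varepsilon,B}[\cdots]$ is even as well. Hence $\mathcal{G}(\varepsilon,B)$ is even.

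The choice \eqref{eq:Ceps-expression} of $c_{\varepsilon,B}$ makes $\langle\mathcal{F}(\varepsilon,B),\cos\theta\rangle=0$. Since $\langle\kappa,\cos\theta\rangle=0$, it follows that $\langle\mathcal{G}(\varepsilon,B),\cos\theta\rangle=0$ for $\varepsilon\ne0$, and for $\varepsilon=0$ by continuity. Therefore $\mathcal{G}(\varepsilon,B)\in X^s$.
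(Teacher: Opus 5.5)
Your outline is the paper's: regularity of $\mathcal G(\varepsilon,B)$ for $\varepsilon\neq0$ via Proposition~\ref{prop:Dirichlet}, evenness from the evenness of $\phi_{\varepsilon,B}$, and orthogonality to $\cos\theta$ from the normalization \eqref{eq:Ceps-expression}. Those parts are fine. The step that fails as you describe it is the passage to $\varepsilon=0$, and the obstruction is not the derivative loss, which only fixes the target space $X^s$. Let $c_{0,B}$ be the limit of $c_{\varepsilon,B}$ from Proposition~\ref{prop:F_expansion}. Then \eqref{eq: F_0} yields
\[
\mathcal G(\varepsilon,B)=8A_0^2\sigma_2^{-1}(B-\Lambda_0B)+2R^3A_0^2\sigma_1^{-1}\sigma_2^{-1}\cos3\theta-4A_0\sigma_2^{-1}h^2\sqrt{F_R}-\sigma_2\,\frac{c_{\varepsilon,B}-c_{0,B}}{\varepsilon}+O(\varepsilon).
\]
So every nonconstant Fourier mode of $\mathcal G(\varepsilon,B)$ converges. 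The constant mode converges if and only if $c_{\varepsilon,B}$ is differentiable at $\varepsilon=0$, that is, if and only if your $C_3(B)$ exists. Since $c_{\varepsilon,B,2}=2\pi R\sigma_1\varepsilon+O(\varepsilon^2)$, this needs $c_{\varepsilon,B,1}=\langle\mathcal F_0(\varepsilon,B),\cos\theta\rangle$ up to an error $o(\varepsilon^2)$. That in turn involves the $\varepsilon^2$-term of $\nabla\phi_{\varepsilon,B}$ on $\partial\varOmega_{\varepsilon,B}$. Neither tool you invoke supplies this. The $O(\varepsilon^2)$ remainder in \eqref{eq: F_0} only gives $c_{\varepsilon,B}=c_{0,B}+O(\varepsilon)$, i.e.\ boundedness of $\mathcal G$. $C^1$ dependence of $\bar\phi_{\varepsilon,B}$ on $\varepsilon$ only gives $o(\varepsilon)$ remainders, not even the $O(\varepsilon^2)$ you ask for. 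That yields continuity, but not differentiability, of $c_{\varepsilon,B,1}/c_{\varepsilon,B,2}$ at $\varepsilon=0$.

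To close this, you need for each $B\in X'_{s+1}(B^*)$ a second-order expansion $\phi_{\varepsilon,B}=\phi_0+\varepsilon\phi_1+\varepsilon^2\phi_2+o(\varepsilon^2)$ with $\phi_2$ independent of $\varepsilon$. It can be obtained as in Step~I of the proof of Proposition~\ref{prop:choice of B_0}, which the paper carries out only for $B=B^*$. Then \eqref{eq:C3-integral} with $B$ in place of $B^*$ defines $C_3(B)$, and your formula for $\mathcal G(0,B)$ follows. Alternatively, $C^2$ dependence of $\bar\phi_{\varepsilon,B}$ on $\varepsilon$ would suffice, but that requires $\mathfrak H$ to be $C^2$. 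The paper's own justification, that $\mathcal G$ ``can be defined at $\varepsilon=0$ by continuity'' by Proposition~\ref{prop:F_expansion}, passes over the same point, since that proposition only gives $\mathcal F=\kappa+O(\varepsilon)$. Finally, because $\mathcal F=\mathcal F_0-c_{\varepsilon,B}[\cdots]$, the $\varepsilon$-coefficient $C_3(B)$ of $c_{\varepsilon,B}$ enters $\mathcal G(0,B)$ as $-C_3(B)\sigma_2$, not $+C_3(B)\sigma_2$; this does not affect the mapping property.
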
 

The core of this section is  to show that the Fr\'echet derivative  
\begin{equation*}
D_B\mathcal{G}(0,B^*):X^{s+1}\to X^s
\end{equation*}
is invertible.  Since  $ \textbf{B}  $ is even, thoughout this section, we always denote the Fourier expansion of $ \textbf{B} $ as $$ \textbf{B}(\theta) =  B_0+2\sum_{n\geq 1} B_n \cos n\theta. $$ 

\subsection{Estimates of $  \nabla\phi_{\varepsilon,B^*}\big|_{\rho=1+\varepsilon B^*} $  and  $\nabla\Phi_{\varepsilon, B^*,\textbf{B}}\big|_{\rho=1+\varepsilon B^*} $}

Let us first give some asymptotics of the first-order derivative of $  \phi_{\varepsilon,B^*} $  and  $\Phi_{\varepsilon, B^*,\textbf{B}} $ on the boundary $ \rho=1+\varepsilon B^*. $

Recall that $  \partial_{x}\phi_{\varepsilon,B^*}\big|_{\rho=1+\varepsilon B^*} $ and $  \partial_{y}\phi_{\varepsilon,B^*}\big|_{\rho=1+\varepsilon B^*} $ satisfy
\begin{equation}\label{eq:pxpy-phi}
\begin{split}
\partial_x\phi_{\varepsilon,B^*}\big|_{\rho=1+\varepsilon B^*}&=2A_0 \cos\theta\\
&\quad+\varepsilon\left[2A_0  B^*(\theta)\cos\theta+2A_1\cos^2\theta-h^2\sqrt{F_R}\cos\theta-2A_0\partial_x(P_{0}B^*)|_{\rho=1 }\right]+O(\varepsilon^2),\\
\partial_y\phi_{\varepsilon,B^*}\big|_{\rho=1+\varepsilon B^*}&=2A_0 \sin\theta\\
&\quad+\varepsilon\left[2A_0  B^*(\theta)\sin\theta+A_1\sin2\theta-h^2\sqrt{F_R}\sin\theta-2A_0\partial_y(P_{0}B^*)|_{\rho=1}\right]+O(\varepsilon^2).
\end{split}
\end{equation}
For asymptotics of $\partial_{x}\Phi_{\varepsilon, B^*,\textbf{B}}\big|_{\rho=1+\varepsilon B^*} $ and $\partial_{y}\Phi_{\varepsilon, B^*,\textbf{B}}\big|_{\rho=1+\varepsilon B^*} $, let us   define three operators $ \mathcal{T}_1:X^{s+1}\to C^{s}(\mathbb{D}), \mathcal{T}_2:X^{s+1}\to X^{s-1}  $ and $ \mathcal{T}_3:X^{s+1}\to X^{s-1} $ as 
\begin{align*}
\mathcal{T}_1 \textbf{B}
&:=2t^*\sum_{n\ge1}B_nn\rho^{n}\cos n\theta
+ C^* \sum_{n\ge1}B_nn\left(\rho^{|n+3|}\cos(n+3)\theta+\rho^{|n-3|}\cos(n-3)\theta\right),\\
\mathcal{T}_2 \textbf{B}
&:=2B^*(\theta)\cos\theta\sum_{n\ge1}B_nn(n-1) \cos n\theta +2B^*(\theta) \sin\theta\sum_{n\ge1}B_nn(n-1) \sin n\theta,\\
\mathcal{T}_3 \textbf{B}
&:= 
2B^*(\theta)\sin\theta\sum_{n\ge1}B_nn(n-1)  \cos n\theta  -2B^*(\theta)\cos\theta\sum_{n\ge1}B_nn(n-1)  \sin n\theta.  
\end{align*}

\begin{lemma}\label{lemma:expan of px-PB}
	For $ \textbf{B} \in X^{s+1} $ and any small enough $\varepsilon$,	
\begin{align*}
\partial_xP_{\varepsilon B^*}  \textbf{B}\big|_{\rho=1+\varepsilon B^*}
&=\partial_x(P_0  \textbf{B})\big|_{\rho=1}
+\varepsilon\left(\mathcal{T}_2  \textbf{B}-\partial_x  \mathcal{T}_1  \textbf{B}\big|_{\rho=1}\right)+O(\varepsilon^2), \\
\partial_yP_{\varepsilon B^*}  \textbf{B}\big|_{\rho=1+\varepsilon B^*}
&=\partial_y(P_0  \textbf{B})\big|_{\rho=1}
+\varepsilon\left(\mathcal{T}_3  \textbf{B}-\partial_y  \mathcal{T}_1  \textbf{B}\big|_{\rho=1}\right)+O(\varepsilon^2),
\end{align*}

\end{lemma}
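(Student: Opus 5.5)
We expand $v:=P_{\varepsilon B^*}\mathbf{B}$ to first order in $\varepsilon$ around $P_0\mathbf{B}$ and then evaluate its gradient on the perturbed circle $\rho=1+\varepsilon B^*$. The two contributions at order $\varepsilon$ are, respectively, the correction of the harmonic extension caused by the domain change, and the Taylor shift of the evaluation point from $\rho=1$ to $\rho=1+\varepsilon B^*$. The plan is to show that the first is $-\varepsilon\,\nabla\mathcal{T}_1\mathbf{B}|_{\rho=1}$ and the second is $\varepsilon(\mathcal{T}_2\mathbf{B},\mathcal{T}_3\mathbf{B})$.

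\textbf{Step 1 (domain correction).} Write $v=P_0\mathbf{B}-\varepsilon w+O(\varepsilon^2)$, with $w$ harmonic in the disk; this is legitimate since $v$ is harmonic on $\varOmega_{\varepsilon,B^*}$ and $P_0\mathbf{B}$ extends harmonically to a neighborhood of the closed disk. Imposing $v(1+\varepsilon B^*,\theta)=\mathbf{B}(\theta)$ and Taylor expanding $P_0\mathbf{B}$ in $\rho$ gives
\[
w(1,\theta)=B^*(\theta)\,\partial_\rho P_0\mathbf{B}(1,\theta)=B^*\sum_{n\ge1}2nB_n\cos n\theta .
\]
We now insert $B^*=C^*\cos3\theta+t^*$ and use $2\cos3\theta\cos n\theta=\cos(n+3)\theta+\cos(n-3)\theta$. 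This gives
\[
w|_{\rho=1}=2t^*\sum_{n\ge1} nB_n\cos n\theta+C^*\sum_{n\ge1} nB_n\bigl(\cos(n+3)\theta+\cos(n-3)\theta\bigr).
\]
Its harmonic extension is obtained by attaching $\rho^{|m|}$ to each mode $\cos m\theta$, which is exactly $\mathcal{T}_1\mathbf{B}$. Hence $w=\mathcal{T}_1\mathbf{B}$. The remainder bound $O(\varepsilon^2)$ in $C^1$ up to the boundary follows from the same pull-back to $\mathbb{D}$ via $\Xi_{\varepsilon,B^*}$ used in Proposition~\ref{prop:Dirichlet}, together with Schauder estimates. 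This step needs the loss of derivatives allowed by $\mathbf{B}\in X^{s+1}$, with errors measured in $C^{s-1}$, consistent with the target space of $\mathcal{T}_2$ and $\mathcal{T}_3$.

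\textbf{Step 2 (evaluation on the perturbed boundary).} We have
\[
\partial_x v|_{\rho=1+\varepsilon B^*}=\partial_xP_0\mathbf{B}|_{\rho=1}+\varepsilon B^*\,\partial_\rho\partial_xP_0\mathbf{B}|_{\rho=1}-\varepsilon\,\partial_x\mathcal{T}_1\mathbf{B}|_{\rho=1}+O(\varepsilon^2),
\]
and similarly for $\partial_y$. By Lemma~\ref{lem:poisson-calculus}, $\partial_xP_0\mathbf{B}=\sum_{n\ne0}|n|B_n\rho^{|n|-1}e^{i(n-\mathrm{sgn}\,n)\theta}$. Then $\partial_\rho$ at $\rho=1$ multiplies each term by $|n|-1$. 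For even $\mathbf{B}$ with $B_{-n}=B_n$, combining $\pm n$ gives
\[
\sum_{n\ge1}2n(n-1)B_n\cos(n-1)\theta=\sum_{n\ge1}2n(n-1)B_n\bigl(\cos\theta\cos n\theta+\sin\theta\sin n\theta\bigr).
\]
Multiplying by $B^*$ produces $\mathcal{T}_2\mathbf{B}$. For $\partial_y$ the factor $i\,\mathrm{sgn}(n)$ turns the combination into $\sin\theta\cos n\theta-\cos\theta\sin n\theta$, which gives $\mathcal{T}_3\mathbf{B}$.

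The main obstacle is Step 1: justifying rigorously that the domain variation of the Poisson operator is, to first order, the harmonic extension of $B^*\Lambda_0\mathbf{B}$, with a uniform $O(\varepsilon^2)$ error in $C^1$ up to the boundary. I would handle this by differentiating the identity $\mathfrak{H}$-type pulled-back problem $\Delta_{\Xi}(\bar v)=0$, $\bar v|_{\partial\mathbb D}=\mathbf{B}$ in $\varepsilon$, exactly as in Proposition~\ref{prop:Dirichlet}. The Fourier bookkeeping in Step 2 is routine.
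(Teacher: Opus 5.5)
Your proposal is correct and follows the paper's proof essentially step for step. The paper likewise writes $P_{\varepsilon B^*}\mathbf B-P_0\mathbf B$ as $-\varepsilon$ times the Poisson extension of $B^*\Lambda_0\mathbf B$, identifies it with $\mathcal T_1\mathbf B$ via $2\cos3\theta\cos n\theta=\cos(n+3)\theta+\cos(n-3)\theta$, and then Taylor-shifts $\nabla P_0\mathbf B$ from $\rho=1$ to $\rho=1+\varepsilon B^*$ to obtain $\mathcal T_2\mathbf B$ and $\mathcal T_3\mathbf B$.

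One correction to your justification: $P_0\mathbf B$ does not extend harmonically across the unit circle unless $\mathbf B$ is real-analytic. This matters because $B^*$ has positive mean $t^*$, so the perturbed boundary does leave $\mathbb D$. The evaluations at $\rho=1+\varepsilon B^*$ should therefore be justified through the pull-back to $\mathbb D$ that you also propose, not through that extension claim---a point the paper leaves implicit as well.
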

\begin{proof}

Note that $ P_0  \textbf{B}=B_0+ \sum_{|n|\ge1}B_n\rho^{|n|}\cos n\theta. $  
Set $\varphi_1:=P_{\varepsilon B^*}  \textbf{B}-P_0  \textbf{B}$. Then $ \varphi_1 $ satisfies $ \Delta\varphi_1 =0    $ with boundary condition
\begin{equation*}
\begin{split}
\varphi_1(1+\varepsilon B^*)
&=  \textbf{B}-\left(B_0+\sum_{|n|\ge1}B_n(1+\varepsilon B^*)^{|n|}\cos n\theta\right)  =-\varepsilon B^*(\theta)\sum_{|n|\ge1}B_n|n|\cos n\theta+O(\varepsilon^2).
\end{split}
\end{equation*}
Hence
\begin{align*}
\varphi_1
&=-\varepsilon P_{\varepsilon B^*}\left( B^*(\theta)\sum_{|n|\ge1}B_n|n|\cos n\theta\right)+O(\varepsilon^2) \\
&=-\varepsilon  P_{0}\left( B^*(\theta)\sum_{|n|\ge1}B_n|n|\cos n\theta \right)  +O(\varepsilon^2) \\
&=-\varepsilon t^*\sum_{|n|\ge1}B_n|n|\rho^{|n|}\cos n\theta   -\frac{\varepsilon C^*}{2}\sum_{|n|\ge1}B_n|n|\left(\rho^{|n+3|}\cos(n+3)\theta+\rho^{|n-3|}\cos(n-3)\theta\right)+O(\varepsilon^2),
\end{align*}
which implies that  $ P_{\varepsilon B^*}  \textbf{B}=P_0  \textbf{B}-\varepsilon \mathcal{T}_1  \textbf{B}+O(\varepsilon^2)
 $  in $ \varOmega_{\varepsilon,B^*}. $
Differentiating this expansion yields that
\begin{align*}
\partial_xP_{\varepsilon B^*}  \textbf{B}
&=\partial_xP_0  \textbf{B}-\varepsilon\partial_x \mathcal{T}_1  \textbf{B}+O(\varepsilon^2), \quad
\partial_yP_{\varepsilon B^*}  \textbf{B}
=\partial_yP_0  \textbf{B}-\varepsilon\partial_y \mathcal{T}_1  \textbf{B}+O(\varepsilon^2).
\end{align*}
So on the boundary $ \rho=1+\varepsilon B^* $,  we have
\begin{equation}\label{eq: px-py-expan}
\begin{split}
\partial_xP_{\varepsilon B^*}  \textbf{B}\big|_{\rho=1+\varepsilon B^*}
&=\partial_xP_0  \textbf{B}\big|_{\rho=1+\varepsilon B^*}
-\varepsilon\left(\partial_x  \mathcal{T}_1  \textbf{B}\right)\big|_{\rho=1}+O(\varepsilon^2), \\
\partial_yP_{\varepsilon B^*}  \textbf{B}\big|_{\rho=1+\varepsilon B^*}
&=\partial_yP_0  \textbf{B}\big|_{\rho=1+\varepsilon B^*}
-\varepsilon\left(\partial_y  \mathcal{T}_1  \textbf{B}\right)\big|_{\rho=1}+O(\varepsilon^2).
\end{split}
\end{equation}
Note that 
\begin{align*}
\partial_xP_0  \textbf{B}\big|_{\rho=1+\varepsilon B^*}&=
\partial_xP_0  \textbf{B}\big|_{\rho=1}+\varepsilon B^*(\theta)\partial_{\rho}\partial_xP_0 \textbf{B} \big|_{\rho=1}+O(\varepsilon^2)\\
&=\partial_xP_0  \textbf{B}\big|_{\rho=1}
+2\varepsilon B^*(\theta)\cos\theta \sum_{n\ge1}B_nn(n-1)\cos n\theta \\
&\quad +2\varepsilon B^*(\theta)\sin\theta\sum_{n\ge1}B_nn(n-1)\sin n\theta+O(\varepsilon^2),\\
\partial_yP_0  \textbf{B}\big|_{\rho=1+\varepsilon B^*}&=
\partial_yP_0  \textbf{B}\big|_{\rho=1}+\varepsilon B^*(\theta)\partial_{\rho}\partial_yP_0 \textbf{B} \big|_{\rho=1}+O(\varepsilon^2)\\
&=\partial_yP_0  \textbf{B}\big|_{\rho=1}
+2\varepsilon B^*(\theta)\sin\theta \sum_{n\ge1}B_nn(n-1)\cos n\theta \\
&\quad -2\varepsilon B^*(\theta)\cos\theta\sum_{n\ge1}B_nn(n-1)\sin n\theta+O(\varepsilon^2).
\end{align*}
Taking these into \eqref{eq: px-py-expan} and using the definition of $ \mathcal{T}_2 $ and $ \mathcal{T}_3 $ yield the result.

\end{proof}
From the definition of $ \mathcal{T}_1 $, we record that
\begin{align*}
\partial_\rho(  \mathcal{T}_1  \textbf{B})\big|_{\rho=1}
&=2t^*  \sum_{n\ge1}B_nn^2\cos n\theta
+C^*\sum_{n\ge1}B_nn(n+3)\cos(n+3)\theta \\
&\quad +C^*\sum_{n\ge3}B_nn(n-3)\cos(n-3)\theta+2C^*B_2 \cos\theta.
\end{align*}
A direct consequence of Lemma \ref{lemma:expan of px-PB} is the asymptotics of $ \nabla\Phi_{\varepsilon, B^*,\textbf{B}} $ on the boundary $ \rho=1+\varepsilon B^*. $
\begin{lemma}\label{lemma: expan of px-Phi}
		For $ \textbf{B} \in X^{s+1} $ and any small enough $\varepsilon$,	
\begin{align*}
\partial_x\Phi_{\varepsilon,B^*, \textbf{B}}\big|_{\rho=1+\varepsilon B^*}
=&-2\varepsilon A_0  \partial_xP_0  \textbf{B}\big|_{\rho=1}+\varepsilon^2\Bigl[-2A_0 \left(\mathcal{T}_2  \textbf{B}-\partial_x  \mathcal{T}_1  \textbf{B}\big|_{\rho=1}\right)
   \\
& +A_0\partial_x\mathcal{T} \textbf{B}\big|_{\rho=1}-2A_1\partial_xP_0(  \cos\theta\, \textbf{B})\big|_{\rho=1}+h^2\sqrt{F_R}\partial_xP_0  \textbf{B}\big|_{\rho=1}
 \\
&  +4A_0C^*\partial_xP_0(\cos3\theta\,  \textbf{B} )\big|_{\rho=1}-2A_0t^*\partial_xP_0  \textbf{B}\big|_{\rho=1}\Bigr]+O(\varepsilon^3), \\
\partial_y\Phi_{\varepsilon,B^*,  \textbf{B}}\big|_{\rho=1+\varepsilon B^*}
=&-2\varepsilon A_0   \partial_y P_0  \textbf{B}\big|_{\rho=1}+\varepsilon^2\Bigl[-2  A_0 \left(\mathcal{T}_3  \textbf{B}-\partial_y  \mathcal{T}_1  \textbf{B}\big|_{\rho=1}\right)
  \\
&  +A_0\partial_y\mathcal{T}  \textbf{B}\big|_{\rho=1}-2A_1\partial_yP_0(  \cos\theta\, \textbf{B})\big|_{\rho=1}+h^2\sqrt{F_R}\partial_yP_0  \textbf{B}\big|_{\rho=1}
 \\
&  +4A_0C^*\partial_yP_0(  \cos3\theta\, \textbf{B}  )\big|_{\rho=1}-2A_0t^*\partial_yP_0  \textbf{B}\big|_{\rho=1}\Bigr]+O(\varepsilon^3).
\end{align*}
\end{lemma}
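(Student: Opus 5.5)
The plan is to combine the interior expansion of Proposition~\ref{prop:variation} with the boundary expansion of Lemma~\ref{lemma:expan of px-PB}. Everything is evaluated at $\rho=1+\varepsilon B^*$ and kept only up to $O(\varepsilon^3)$.

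Start from the formula for $\partial_x\Phi_{\varepsilon,B^*,\mathbf B}$ in Proposition~\ref{prop:variation} and restrict it to the boundary. The leading term is $-2\varepsilon A_0\,\partial_xP_{\varepsilon B^*}\mathbf B|_{\rho=1+\varepsilon B^*}$. By Lemma~\ref{lemma:expan of px-PB} this equals
\[
-2\varepsilon A_0\,\partial_xP_0\mathbf B|_{\rho=1}-2\varepsilon^2A_0\bigl(\mathcal T_2\mathbf B-\partial_x\mathcal T_1\mathbf B|_{\rho=1}\bigr)+O(\varepsilon^3),
\]
which supplies the first piece of the $\varepsilon^2$ bracket.

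The remaining terms already carry a factor $\varepsilon^2$, so only their zeroth-order boundary values are needed. Both $P_{\varepsilon B^*}$ and the point of evaluation differ from the disk case by $O(\varepsilon)$:
\begin{align*}
\partial_xP_{\varepsilon B^*}g|_{\rho=1+\varepsilon B^*}&=\partial_xP_0g|_{\rho=1}+O(\varepsilon),\\
\partial_x\mathcal T\mathbf B|_{\rho=1+\varepsilon B^*}&=\partial_x\mathcal T\mathbf B|_{\rho=1}+O(\varepsilon).
\end{align*}
For the first line, apply the first-order part of Lemma~\ref{lemma:expan of px-PB} (or its proof with $\mathbf B$ replaced by $g$) to $g=\cos\theta\,\mathbf B$, $g=\cos3\theta\,\mathbf B$ and $g=\mathbf B$. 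For the second line, use a Taylor expansion in $\rho$, which needs $\partial_\rho\partial_x\mathcal T\mathbf B$ to be bounded near $\rho=1$; this follows from its explicit Fourier form together with Schauder estimates. Multiplying by $\varepsilon^2$, these errors are absorbed into $O(\varepsilon^3)$, and we obtain the stated formula for $\partial_x\Phi$. The computation for $\partial_y\Phi$ is identical, using the $\partial_y$ parts of Proposition~\ref{prop:variation} and Lemma~\ref{lemma:expan of px-PB}, with $\mathcal T_3$ in place of $\mathcal T_2$.

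The main obstacle is making the error terms uniform in $C^{s}$ norms. The $O(\varepsilon^3)$ errors in Proposition~\ref{prop:variation} hold in the interior, and restricting them to the boundary requires their first derivatives to be controlled up to $\partial\varOmega_{\varepsilon,B^*}$. I would obtain this control by carrying out the expansions in the pulled-back disk coordinates given by $\Xi_{\varepsilon,B^*}$, as in Proposition~\ref{prop:Dirichlet}, where the Schauder estimates are uniform. The price is a loss of one derivative in $\mathbf B$, which is acceptable since $\mathbf B\in X^{s+1}$.
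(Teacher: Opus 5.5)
Your proposal is correct and is essentially the paper's proof. You restrict the expansion of Proposition~\ref{prop:variation} to $\rho=1+\varepsilon B^*$, replace the terms that already carry $\varepsilon^2$ by their disk values at $\rho=1$ modulo $O(\varepsilon^3)$, and expand the leading term $-2\varepsilon A_0\,\partial_xP_{\varepsilon B^*}\mathbf B$ (resp.\ $\partial_y$) using Lemma~\ref{lemma:expan of px-PB}; your extra remarks on making the error terms uniform up to the boundary go beyond what the paper writes but do not change the argument.
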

\begin{proof}
It follows from Proposition \ref{prop:variation} that
\begin{align*}
\partial_x\Phi_{\varepsilon,B^*, \textbf{B}}\big|_{\rho=1+\varepsilon B^*}
=&-2\varepsilon A_0\partial_xP_{\varepsilon B^*}  \textbf{B}\big|_{\rho=1+\varepsilon B^*} \\
& +\varepsilon^2\Bigl[A_0\partial_x\mathcal{T} \textbf{B}\big|_{\rho=1}-2A_1\partial_xP_0(  \cos\theta\, \textbf{B})\big|_{\rho=1}
+h^2\sqrt{F_R}\partial_xP_0  \textbf{B}\big|_{\rho=1} \\
&  +4A_0C^*\partial_xP_0(\cos3\theta\,  \textbf{B} )\big|_{\rho=1}-2A_0t^*\partial_xP_0  \textbf{B}\big|_{\rho=1}\Bigr]+O(\varepsilon^3), \\
\partial_y\Phi_{\varepsilon,B^*,  \textbf{B}}\big|_{\rho=1+\varepsilon B^*}
=&-2\varepsilon A_0\partial_yP_{\varepsilon B^*}  \textbf{B}\big|_{\rho=1+\varepsilon B^*} \\
& +\varepsilon^2\Bigl[A_0\partial_y\mathcal{T}  \textbf{B}\big|_{\rho=1}-2A_1\partial_yP_0(  \cos\theta\, \textbf{B})\big|_{\rho=1}
+h^2\sqrt{F_R}\partial_yP_0  \textbf{B}\big|_{\rho=1} \\
&  +4A_0C^*\partial_yP_0(  \cos3\theta\, \textbf{B}  )\big|_{\rho=1}-2A_0t^*\partial_yP_0  \textbf{B}\big|_{\rho=1}\Bigr]+O(\varepsilon^3).
\end{align*}
Taking 
Lemma \ref{lemma:expan of px-PB} into these formulas, we get the expansion of $ \partial_i\Phi_{\varepsilon,B^*, \textbf{B}}\big|_{\rho=1+\varepsilon B^*} $ for $ i=x,y $ up to order $  \varepsilon^3  $, as claimed.
\end{proof}

\subsection{Invertiblity of $ D_B\mathcal{G}(0,B^*) $}
In the next theorem we derive the key property of the map $\mathcal{G}$: as its domain consists of the even functions orthogonal to $\cos\theta$, we can show that its derivative with respect to $B$ at certain points is an invertible map.

\begin{theorem}\label{thm:invert}
	For any $R>0$, the Fr\'echet derivative
	\[
	D_B\mathcal{G}(0,B^*) : X^{s+1} \to X^s
	\]
	is one-to-one.
\end{theorem}


\begin{proof}

It follows from the definition of $\mathcal{F}$ (Equation~\eqref{eq:Fboundary}) and of $\Phi_{\varepsilon,B^*,\textbf{B}}$ that
\begin{align*}
D_B\mathcal F(\varepsilon,B^*)  \textbf{B} 
&=I_1+I_2+I_3 \\
&\quad -c_{\varepsilon,B^*}\Bigl(
2(R+\varepsilon\sigma_1(1+\varepsilon B^*)\cos\theta)\varepsilon^2\sigma_1 \cos\theta\, \textbf{B}
+2\varepsilon^3\sigma_2^2(1+\varepsilon B^*)\sin^2\theta\,  \textbf{B}\Bigr) \\
&\quad -\widetilde C_{\varepsilon,B^*,\textbf{B}}
\Bigl((R+\varepsilon\sigma_1(1+\varepsilon B^*)\cos\theta)^2+(\varepsilon\sigma_2(1+\varepsilon B^*)\sin\theta)^2+h^2\Bigr), 
\end{align*}
where
\begin{align*}
I_1:=&\lim_{t\to0}\frac{
	\sigma_1^{-2}\bigl(h^2+(\varepsilon\sigma_2y)^2\bigr)(\partial_x\phi_{\varepsilon,B^*+t  \textbf{B}})^2\big|_{\rho=1+\varepsilon(B^*+t  \textbf{B})}-
	\sigma_1^{-2}\bigl(h^2+(\varepsilon\sigma_2y)^2\bigr)(\partial_x\phi_{\varepsilon,B^*})^2\big|_{\rho=1+\varepsilon B^*}}{t}  \\
=&2\sigma_1^{-2}\varepsilon^3 \sigma_2^2\left(1+\varepsilon B^*\right) \sin ^2 \theta\,  \textbf{B}\left(\partial_x \phi_{\varepsilon, B^*}\right)^2\big|_{\rho=1+\varepsilon B^*}\\
&+2\sigma_1^{-2}\left(h^2+\left(\varepsilon \sigma_2 y\right)^2\right)    \partial_x \phi_{\varepsilon, B^*} \partial_x \Phi_{\varepsilon, B^*, \textbf{B}}\big|_{\rho=1+\varepsilon B^*} \\
& +\sigma_1^{-2}\left(h^2+\varepsilon^2 \sigma_2^2 y^2\right) \partial_\rho\left(\partial_x \phi_{\varepsilon, B^*}\right)^2\big|_{\rho=1+\varepsilon B^*} \cdot \varepsilon \textbf{B},\\
I_2:=&\lim_{t\to0}\frac{
	\sigma_2^{-2}\bigl(h^2+(R+\varepsilon\sigma_1x)^2\bigr)(\partial_y\phi_{\varepsilon,B^*+t  \textbf{B}})^2\big|_{\rho=1+\varepsilon(B^*+t  \textbf{B})}-
	\sigma_2^{-2}\bigl(h^2+(R+\varepsilon\sigma_1x)^2\bigr)(\partial_y\phi_{\varepsilon,B^*})^2\big|_{\rho=1+\varepsilon B^*}}{t}  \\
= &  2\sigma_2^{-2}\left(R+\varepsilon \sigma_1\left(1+\varepsilon B^*\right) \cos \theta\right)  \varepsilon^2 \sigma_1 \cos \theta\, \textbf{B}\left(\partial_y \phi_{\varepsilon, B^*}\right)^2 \big|_{\rho=1+ \varepsilon B^*}\\
&+ 2\sigma_2^{-2}\left(h^2+\left(R+\varepsilon \sigma_1 x\right)^2\right)    \partial_y \phi_{\varepsilon,   B^*}   \partial_y \Phi_{\varepsilon, B^*, \textbf{B}}\big|_{\rho=1+\varepsilon B^*} \\
& +\sigma_2^{-2}\left(h^2+\left(R+\varepsilon \sigma_1 x\right)^2\right)   \partial_\rho\left(\partial_y \phi_{\varepsilon, B^*}\right)^2\big|_{\rho=1+\varepsilon B^*}\cdot \varepsilon \textbf{B},\\
I_3:=&\lim_{t\to0}\frac{-2(R+\varepsilon\sigma_1x)\varepsilon\sigma_1^{-1}y
	(\partial_x\phi_{\varepsilon,B^*+t  \textbf{B}}\partial_y\phi_{\varepsilon,B^*+t  \textbf{B}})\big|_{\rho=1+\varepsilon(B^*+t  \textbf{B})}  + 2(R+\varepsilon\sigma_1x)\varepsilon\sigma_1^{-1}y
	(\partial_x\phi_{\varepsilon,B^*}\partial_y\phi_{\varepsilon,B^*})\big|_{\rho=1+\varepsilon B^*}}{t}\\
=&- 2 \varepsilon^2 R \sigma_1^{-1} \sin \theta\, \textbf{B}  \left(\partial_x \phi_{\varepsilon, B^*}  \partial_y \phi_{\varepsilon, B^*}\right)\big|_{\rho=1+\varepsilon B^*}\\
&-2 \varepsilon \sigma_1^{-1}\left(R+\varepsilon \sigma_1\left(1+\varepsilon B^*\right) \cos \theta\right)\left(1+\varepsilon B^*\right) \sin \theta\left(\partial_x \phi_{\varepsilon, B^*} \partial_y \Phi_{\varepsilon, B^*, \textbf{B}}+\partial_x \Phi_{\varepsilon, B^*, \textbf{B}} \partial_y \phi_{\varepsilon, B^*}\right)\big|_{\rho=1+\varepsilon B^*}  \\
&-2 \varepsilon \sigma_1^{-1}\left(R+\varepsilon \sigma_1\left(1+\varepsilon B^*\right) \cos \theta\right)\left(1+\varepsilon B^*\right) \sin \theta   \partial_\rho\left(\partial_x \phi_{\varepsilon, B^*} \partial_y \phi_{\varepsilon, B^*}\right)\big|_{\rho=1+\varepsilon B^*} \cdot \varepsilon \textbf{B}+O\left(\varepsilon^3\right), 
\end{align*}
and where the constant $\widetilde C_{\varepsilon,B^*,\textbf{B}}$ is given by the derivative
\begin{equation*} 
\widetilde C_{\varepsilon,B^*,\textbf{B}}:=\left.\frac{d}{dt}\right|_{t=0}c_{\varepsilon,B^*+t \textbf{B}}.
\end{equation*}
Combining the above three difference quotients gives
\begin{equation}\label{eq:DBF}
\begin{split}
I_1+I_2+I_3 
&=2 \sigma_2^{-1} 
\partial_x\phi_{\varepsilon,B^*} \partial_x\Phi_{\varepsilon,B^*,  \textbf{B}}\big|_{\rho=1+\varepsilon B^*}  + \sigma_2^{-1} 
\partial_\rho\left ((\partial_x\phi_{\varepsilon,B^*})^2\right )\big|_{\rho=1+\varepsilon B^*}\cdot \varepsilon  \textbf{B} \\
&\quad +2R\sigma_2^{-2}\varepsilon^2\sigma_1\cos\theta\, \textbf{B}\left (\partial_y\phi_{\varepsilon,B^*}\right )^2\big|_{\rho=1+\varepsilon B^*} \\
&\quad +2\left (\sigma_2^{-1}+2R\varepsilon\sigma_1\sigma_2^{-2}(1+\varepsilon B^*)\cos\theta\right )
\partial_y\phi_{\varepsilon,B^*}\partial_y\Phi_{\varepsilon,B^*,  \textbf{B}}\big|_{\rho=1+\varepsilon B^*} \\
&\quad +\left (\sigma_2^{-1}+2R\varepsilon\sigma_1\sigma_2^{-2}(1+\varepsilon B^*)\cos\theta\right )
\partial_\rho\left ((\partial_y\phi_{\varepsilon,B^*})^2\right )\big|_{\rho=1+\varepsilon B^*}\cdot\varepsilon  \textbf{B} \\
&\quad -2\varepsilon^2\sigma_1^{-1}R\sin\theta\, \textbf{B}\left (\partial_x\phi_{\varepsilon,B^*}\partial_y\phi_{\varepsilon,B^*}\right )\big|_{\rho=1+\varepsilon B^*} \\
&\quad -2\varepsilon\sigma_1^{-1}R\sin\theta
\left(\partial_x\phi_{\varepsilon,B^*}\partial_y\Phi_{\varepsilon,B^*,  \textbf{B}}
+\partial_y\phi_{\varepsilon,B^*}\partial_x\Phi_{\varepsilon,B^*,  \textbf{B}}\right)\big|_{\rho=1+\varepsilon B^*} \\
&\quad -2\varepsilon^2\sigma_1^{-1}R\sin\theta\, \textbf{B}
\partial_\rho(\partial_x\phi_{\varepsilon,B^*}\partial_y\phi_{\varepsilon,B^*})\big|_{\rho=1+\varepsilon B^*} +O\left (\varepsilon^3\right ). 
\end{split}
\end{equation}
We readily obtain from formula~\eqref{eq:pxpy-phi} and~Lemma \ref{lemma: expan of px-Phi} that
\begin{align*}
I_1+I_2+I_3 
&=2\sigma_2^{-1}\nabla\phi_{\varepsilon,B^*}\cdot\nabla\Phi_{\varepsilon,B^*,
	  \textbf{B}}\big|_{\rho=1+\varepsilon B^*}
+\sigma_2^{-1}\partial_\rho\left(|\nabla\phi_{\varepsilon,B^*}|^2\right)\big|_{\rho=1+\varepsilon B^*}\cdot\varepsilon  \textbf{B}+O(\varepsilon^2) \\
&=8A_0^2\sigma_2^{-1}\varepsilon(  \textbf{B}-\Lambda_0  \textbf{B})+O(\varepsilon^2). 
\end{align*}
Since $\widetilde C_{\varepsilon,B^*,\textbf{B}}$ is obviously of order $O(\varepsilon)$, cf.\ Proposition~\ref{prop:F_expansion}, it suffices to employ the leading order terms of this expression to arrive at
\[
D_B\mathcal{F}(\varepsilon,B^*) \textbf{B}
= 8\varepsilon A_0^2\sigma_2^{-1} (  \textbf{B} - \Lambda_0  \textbf{B})
- \widetilde C_{\varepsilon,B^*,\textbf{B}} \sigma_2 + O(\varepsilon^2) .
\]
Hence, in order to compute this derivative modulo an error of order $\varepsilon^2$ we only need to derive asymptotics for $ \widetilde C_{\varepsilon,B^*,\textbf{B}}$.
To do this, we write
$$\widetilde C_i:=\left.\frac{d}{dt}\right|_{t=0}c_{\varepsilon,B^*+t \textbf{B}, i}, \qquad i=1,2.$$  Then
\begin{align*}
\widetilde C_{\varepsilon,B^*,\textbf{B}}=\frac{\widetilde C_1}{c_{\varepsilon,B^*,2}}-c_{\varepsilon,B^*}\frac{\widetilde C_2}{c_{\varepsilon,B^*,2}}.
\end{align*}
Let us start with $\widetilde C_2$. Since
\begin{equation*} 
c_{\varepsilon,B^*+t \textbf{B},2}
=\int_0^{2\pi}\left [(R+\varepsilon\sigma_1(1+\varepsilon(B^*+t  \textbf{B}))\cos\theta)^2 + (\varepsilon\sigma_2(1+\varepsilon(B^*+t  \textbf{B}))\sin\theta)^2+h^2\right ]\cos\theta\, d\theta,
\end{equation*}
we get
\begin{equation*}
\begin{split}
\widetilde C_2=&\int_0^{2\pi}2\left[\left (R+\varepsilon\sigma_1(1+\varepsilon B^*)\cos\theta\right )\varepsilon^2\sigma_1  \textbf{B}\cos\theta
+2\varepsilon^3\sigma_2^2(1+\varepsilon B^*)  \textbf{B}\sin^2\theta\right]
\cos\theta\, d\theta \\
=&2R\varepsilon^2\sigma_1\langle  \textbf{B}, \cos^2\theta\rangle+O(\varepsilon^3)\\
=&2\pi R\sigma_1(B_0+B_2)\varepsilon^2+O(\varepsilon^3).
\end{split}
\end{equation*}
To compute $\widetilde C_1$, we again employ the formula~\eqref{eq:DBF}, now to third order:
\begin{align}\label{eq: C_1}
\widetilde C_1 =	&\left\langle I_1+I_2+I_3, \cos \theta\right\rangle\notag\\
		=&\Big\langle 2\sigma_2^{-1}\nabla\phi_{\varepsilon,B^*}\cdot\nabla\Phi_{\varepsilon,B^*,
			\textbf{B}}\big|_{\rho=1+\varepsilon B^*}
		+\sigma_2^{-1}\partial_\rho\left(|\nabla\phi_{\varepsilon,B^*}|^2\right)\big|_{\rho=1+\varepsilon B^*}\cdot\varepsilon  \textbf{B} \notag\\
		& +2 R \sigma_1\sigma_2^{-2}  \varepsilon^2 \textbf{B} \cos \theta\left(\partial_y \phi_{\varepsilon, B^*}\right)^2\big|_{\rho=1+ \varepsilon B^*} \notag\\
		& +\left.4 R \varepsilon \sigma_1 \sigma_2^{-2}\left(1+\varepsilon B^*\right) \cos \theta\left(\partial_y \phi_{\varepsilon, B^*}   \partial_y \Phi_{\varepsilon,   B^*, \textbf{B}}\right)\right|_{\rho=1+\varepsilon B^*} \notag\\
		& +\left.2 R \varepsilon^2 \sigma_1 \sigma_2^{-2}\left(1+\varepsilon B^*\right) \cos \theta\, \textbf{B}   \partial_\rho\left(\partial_y \phi_{\varepsilon, B^*}\right)^2\right|_{\rho=1+\varepsilon B^*} \notag\\
		& -2 \varepsilon^2 \sigma_1^{-1} R \sin \theta\, \textbf{B}\left(\partial_x \phi_{\varepsilon, B^*} \partial_y \phi_{\varepsilon, B^*}\right)\big|_{\rho=1+\varepsilon B^*} \notag\\
		& -2 \varepsilon \sigma_1^{-1} R \sin \theta\left(\partial_x \phi_{\varepsilon, B^*} \partial_y \Phi_{\varepsilon, B^*, \textbf{B}}+\partial_x \Phi_{\varepsilon,B^*, \textbf{B}} \partial_y \phi_{\varepsilon, B^*}\right)\big|_{\rho=1+ \varepsilon B^*} \notag\\
		& -\left.2 \varepsilon^2 \sigma_1^{-1} R \sin \theta\, \textbf{B} \partial_\rho\left(\partial_x \phi_{\varepsilon, B^*} \partial_y \phi_{\varepsilon, B^*}\right)\right|_{\rho=1+\varepsilon B^*} +O\left(\varepsilon^3\right), \cos \theta\Big\rangle \notag\\
		=&: \left\langle P_1+P_2+P_3+P_4+P_5+P_6+P_7+P_8, \cos \theta\right\rangle+O\left(\varepsilon^3\right) . 
\end{align}

We need to compute  $ \sum_{i=1}^8\left\langle P_i, \cos \theta\right\rangle $ appearing in formula \eqref{eq: C_1} in terms of the Fourier coefficients $ B_n$ (note that $  B_{-n} =  B_n$ because $\textbf{B}$ is even). For $ i=3,\cdots,8 $, we can compute directly the terms $ P_i $  to the order $ \varepsilon^3 $ one by one:
\begin{align*}
P_3&=2R\sigma_1\sigma_2^{-2}\varepsilon^2  \textbf{B}\cos\theta\,(4A_0^2\sin^2\theta+O(\varepsilon)) =8R\sigma_1\sigma_2^{-2}A_0^2\varepsilon^2  \textbf{B}\cos\theta\sin^2\theta+O\left (\varepsilon^3\right ), \\
P_4&=4R\varepsilon\sigma_1\sigma_2^{-2}\cos\theta
\left(2A_0\sin\theta\,(-2)\varepsilon A_0\partial_yP_0\textbf{B}|_{\rho=1}+O(\varepsilon^2)\right)+O\left (\varepsilon^3\right ) \\
&=-16RA_0^2\sigma_1\sigma_2^{-2}\varepsilon^2\cos\theta\sin\theta
\left (\sin\theta\Lambda_0\textbf{B}+\cos\theta M_0\textbf{B}\right )+O\left (\varepsilon^3\right ), \\
P_5&=2R\varepsilon^2\sigma_1\sigma_2^{-2}\cos\theta\,\textbf{B}\left (8A_0^2\sin^2\theta+O(\varepsilon)\right )  =16RA_0^2\sigma_1\sigma_2^{-2}\varepsilon^2\textbf{B}\cos\theta\sin^2\theta+O\left (\varepsilon^3\right ), \\
P_6&=-2\varepsilon^2\sigma_1^{-1}R\sin\theta\,\textbf{B}\left (4A_0^2\cos\theta\sin\theta+O(\varepsilon)\right )  =-8RA_0^2\sigma_1^{-1}\varepsilon^2\textbf{B}\cos\theta\sin^2\theta+O\left (\varepsilon^3\right ), \\
P_7&=-2\varepsilon\sigma_1^{-1}R\sin\theta
\left(2A_0\cos\theta(-2)\varepsilon A_0\partial_yP_0\textbf{B}|_{\rho=1}+2A_0\sin\theta(-2)\varepsilon A_0\partial_xP_0\textbf{B}|_{\rho=1}\right)+O\left (\varepsilon^3\right ) \\
&=8RA_0^2\sigma_1^{-1}\varepsilon^2\sin\theta
\left (\Lambda_0 \textbf{B}\, \sin2\theta+
 M_0\textbf{B}\, \cos2\theta\right )+O\left (\varepsilon^3\right ),\\
P_8&=-2\varepsilon^2\sigma_1^{-1}R\sin\theta\,\textbf{B}\left (8A_0^2\cos\theta\sin\theta+O(\varepsilon)\right )  =-16RA_0^2\sigma_1^{-1}\varepsilon^2\textbf{B}\cos\theta\sin^2\theta+O\left (\varepsilon^3\right ).
\end{align*}
For $P_1$, we obtain from formula~\eqref{eq:pxpy-phi} and~Lemma \ref{lemma: expan of px-Phi} that
\begin{align*}
P_1
=&2\sigma_2^{-1}\left (2A_0\cos\theta+\varepsilon[2A_1\cos^2\theta-2A_0\partial_xP_0B^*|_{\rho=1}
-h^2\sqrt{F_R}\cos\theta+2A_0B^*(\theta)\cos\theta]+O(\varepsilon^2)\right ) \\
& \times\Big(-2\varepsilon A_0  \partial_xP_0  \textbf{B}\big|_{\rho=1}
+\varepsilon^2\Bigl[-2A_0 \left(\mathcal{T}_2  \textbf{B}-\partial_x  \mathcal{T}_1  \textbf{B}\big|_{\rho=1}\right)+A_0\partial_x\mathcal{T} \textbf{B}\big|_{\rho=1}-2A_1\partial_xP_0(  \cos\theta\, \textbf{B})\big|_{\rho=1}
\\
&\quad +h^2\sqrt{F_R}\partial_xP_0  \textbf{B}\big|_{\rho=1} +4A_0C^*\partial_xP_0(\cos3\theta\,  \textbf{B} )\big|_{\rho=1}-2A_0t^*\partial_xP_0  \textbf{B}\big|_{\rho=1}\Bigr]+O(\varepsilon^3)\Bigr) \\
& +2\sigma_2^{-1}
\left (2A_0\sin\theta+\varepsilon[ A_1\sin2\theta-2A_0\partial_yP_0B^*|_{\rho=1}
-h^2\sqrt{F_R}\sin\theta+2A_0B^*(\theta)\sin\theta]+O(\varepsilon^2)\right ) \\
& \times\Bigl(-2\varepsilon A_0   \partial_y P_0  \textbf{B}\big|_{\rho=1}
  +\varepsilon^2\Bigl[-2  A_0 \left(\mathcal{T}_3  \textbf{B}-\partial_y  \mathcal{T}_1  \textbf{B}\big|_{\rho=1}\right)+A_0\partial_y\mathcal{T}  \textbf{B}\big|_{\rho=1}-2A_1\partial_yP_0(  \cos\theta\, \textbf{B})\big|_{\rho=1}
\\
& \quad+h^2\sqrt{F_R}\partial_yP_0  \textbf{B}\big|_{\rho=1} +4A_0C^*\partial_yP_0(  \cos3\theta\, \textbf{B}  )\big|_{\rho=1}-2A_0t^*\partial_yP_0  \textbf{B}\big|_{\rho=1}\Bigr]+O(\varepsilon^3)\Bigr).
\end{align*}
Note that the $\varepsilon$ coefficient of $P_1$ is $ -8\sigma_2^{-1}A_0^2 \Lambda_0  \textbf{B} $. The $\varepsilon^2$ coefficient of $P_1$   is organized  as
\begin{align*}
&2\sigma_2^{-1}\left( -4A_0^2\left (\cos\theta\,\mathcal{T}_2  \textbf{B}+
\sin\theta\,\mathcal{T}_3  \textbf{B}-\partial_\rho(\mathcal{T}_1  \textbf{B})|_{\rho=1}\right )
+2A_0^2\partial_\rho(\mathcal{T}  \textbf{B})|_{\rho=1}
-4A_0A_1\Lambda_0( \cos\theta\, \textbf{B})\right . \\
&
  +2A_0h^2\sqrt{F_R}\Lambda_0  \textbf{B}
+8C^*A_0^2\Lambda_0( \cos3\theta\,  \textbf{B})-4A_0^2t^*\Lambda_0  \textbf{B}+4A_0^2\partial_xP_0  \textbf{B}|_{\rho=1}
\cdot 3C^*\cos2\theta\\
& 
+4A_0^2\partial_yP_0  \textbf{B}|_{\rho=1}
\cdot(-3)C^*\sin2\theta  \left .-2A_0\Lambda_0  \textbf{B}\left (2A_1\cos\theta-h^2\sqrt{F_R}+2A_0B^*(\theta)\right )\right)\\
&\quad=  -8\sigma_2^{-1}A_0^2\left (\cos\theta\,\mathcal{T}_2  \textbf{B}+
\sin\theta\,\mathcal{T}_3  \textbf{B}-\partial_\rho(\mathcal{T}_1  \textbf{B})|_{\rho=1}\right )
+4\sigma_2^{-1}A_0^2\partial_\rho(\mathcal{T}  \textbf{B})|_{\rho=1}
 \\
&\qquad -8\sigma_2^{-1}A_0A_1\Lambda_0( \cos\theta\, \textbf{B})+8\sigma_2^{-1}A_0h^2\sqrt{F_R}\Lambda_0  \textbf{B}
+16\sigma_2^{-1}C^*A_0^2\Lambda_0( \cos3\theta\, \textbf{B}) \\
&\qquad -8\sigma_2^{-1}A_0^2t^*\Lambda_0  \textbf{B}+24\sigma_2^{-1}C^*A_0^2\partial_xP_0  \textbf{B}|_{\rho=1}\cos2\theta
-24\sigma_2^{-1}C^*A_0^2\partial_yP_0  \textbf{B}|_{\rho=1}\sin2\theta \\
&\qquad  -8\sigma_2^{-1}A_0A_1\cos\theta\Lambda_0  \textbf{B}-8\sigma_2^{-1}A_0^2B^*(\theta)\Lambda_0  \textbf{B},
\end{align*}
where we have used that $ P_0B^*=C^*\rho^3\cos3\theta+t^*=C^*(x^3-3xy^2)+t^* $, $ \partial_xP_0B^*\big|_{\rho=1}=3C^*\cos2\theta,  $ and $ \partial_yP_0B^*\big|_{\rho=1}=-3C^* \sin2\theta. $ Therefore
\begin{align*}
P_1&=-8\sigma_2^{-1}A_0^2 \varepsilon\Lambda_0  \textbf{B} \\
&\quad + \left( -8\sigma_2^{-1}A_0^2\left (\cos\theta\,\mathcal{T}_2  \textbf{B}+
\sin\theta\,\mathcal{T}_3  \textbf{B}-\partial_\rho(\mathcal{T}_1  \textbf{B})|_{\rho=1}\right )
+4\sigma_2^{-1}A_0^2\partial_\rho(\mathcal{T}  \textbf{B})|_{\rho=1}
 \right .\\
&\quad -8\sigma_2^{-1}A_0A_1\Lambda_0( \cos\theta\, \textbf{B})+8\sigma_2^{-1}A_0h^2\sqrt{F_R}\Lambda_0  \textbf{B}
+16\sigma_2^{-1}C^*A_0^2\Lambda_0( \cos3\theta\, \textbf{B}) \\
&\quad -8\sigma_2^{-1}A_0^2t^*\Lambda_0  \textbf{B}+24\sigma_2^{-1}C^*A_0^2\partial_xP_0  \textbf{B}|_{\rho=1}\cos2\theta
-24\sigma_2^{-1}C^*A_0^2\partial_yP_0  \textbf{B}|_{\rho=1}\sin2\theta \\
&\quad  \left .-8\sigma_2^{-1}A_0A_1\cos\theta\Lambda_0  \textbf{B}-8\sigma_2^{-1}A_0^2B^*(\theta)\Lambda_0  \textbf{B}\right)\varepsilon^2 +O\left (\varepsilon^3\right ).
\end{align*}
For $ P_2, $ we obtain from Proposition \ref{prop:first-order-dirichlet-expansion} that
\begin{align*}
P_2&=\sigma_2^{-1}\partial_\rho\left((\partial_x\phi_{\varepsilon,B^*})^2+(\partial_y\phi_{\varepsilon,B^*})^2\right)\big|_{\rho=1+\varepsilon B^*}\cdot\varepsilon  \textbf{B} \\
&=\left .\sigma_2^{-1}\varepsilon \textbf{B}\partial_\rho\left (4A_0^2\rho^2+4A_0\varepsilon\left [A_1(3\rho^3-\rho)\cos\theta-2A_0\rho\partial_\rho P_{0}B^*-h^2\sqrt{F_R}\rho^2\right ]+O(\varepsilon^2)\right )\right |_{\rho=1+\varepsilon B^*}\\
&=8\sigma_2^{-1}A_0^2\varepsilon  \textbf{B} \\
&\quad +\left(32\sigma_2^{-1}A_0A_1 \cos\theta\, \textbf{B}-72C^*\sigma_2^{-1}A_0^2\cos3\theta\,  \textbf{B}
-8\sigma_2^{-1}A_0h^2\sqrt{F_R}  \textbf{B}+8\sigma_2^{-1}A_0^2B^* \textbf{B}\right)\varepsilon^2+O\left (\varepsilon^3\right ).
\end{align*}
Substituting the above formulas of $P_1$--$P_8$  into the expression \eqref{eq: C_1} for $\widetilde C_1$, the coefficient of the $O(\varepsilon)$ term is first
\begin{align*}
8\sigma_2^{-2}A_0^2 \langle  \textbf{B}-\Lambda_0  \textbf{B}, \cos\theta\rangle=0,
\end{align*}
where we have used that $\Lambda_0$ is self-adjoint and that $\Lambda_0(\cos\theta)=\cos\theta$. The coefficient of the $O(\varepsilon^2)$ term gives
\begin{align}\label{eq:tilde C_1 expansion}
&-8\sigma_2^{-1}A_0^2\left(
\langle \mathcal{T}_2\textbf{B}, \cos^2\theta\rangle
+\langle \mathcal{T}_3\textbf{B}, \sin\theta\cos\theta\rangle
-\langle \partial_\rho(\mathcal{T}_1\textbf{B})|_{\rho=1}, \cos\theta \rangle\right) \notag\\
&\quad -4\sigma_2^{-1}A_0^2 \langle   \mathcal{T}'  \textbf{B}, \cos\theta\rangle
-8\sigma_2^{-1}A_0A_1\langle \textbf{B}, \cos^2\theta\rangle+8\sigma_2^{-1}A_0h^2\sqrt{F_R}\langle  \Lambda_0  \textbf{B}-\textbf{B}  ,\cos\theta\rangle \notag\\
&\quad+16\sigma_2^{-1}C^*A_0^2\langle\Lambda_0( \cos3\theta\, \textbf{B}),\cos\theta\rangle-8\sigma_2^{-1}A_0^2t^*\langle\Lambda_0  \textbf{B}-\textbf{B}, \cos\theta\rangle \notag\\
&\quad +24\sigma_2^{-1}C^*A_0^2\langle \partial_xP_0\textbf{B}|_{\rho=1}\cos2\theta-
	\partial_yP_0\textbf{B}|_{\rho=1}\sin2\theta, \cos\theta\rangle \notag\\
&\quad -8\sigma_2^{-1}A_0A_1\langle \Lambda_0\textbf{B},\cos^2\theta\rangle
-8\sigma_2^{-1}C^*A_0^2\langle \Lambda_0\textbf{B}, \cos3\theta\cos\theta\rangle-8\sigma_2^{-1}A_0^2t^*\langle \Lambda_0\textbf{B}, \cos\theta\rangle \notag\\
&\quad +32\sigma_2^{-1}A_0A_1\langle \textbf{B}, \cos^2\theta\rangle
-64\sigma_2^{-1}C^*A_0^2\langle\textbf{B},\cos3\theta\cos\theta\rangle +8R\sigma_1\sigma_2^{-2}A_0^2\langle \textbf{B}, \cos^2\theta\sin^2\theta\rangle \notag\\
&\quad -16R\sigma_1\sigma_2^{-2}A_0^2
\left(\langle \Lambda_0\textbf{B}, \cos^2\theta\sin^2\theta\rangle
+\langle M_0\textbf{B}, \cos^3\theta\sin\theta\rangle\right) \notag\\
&\quad +16R\sigma_1\sigma_2^{-2}A_0^2\langle \textbf{B}, \cos^2\theta\sin^2\theta\rangle
-8R\sigma_1^{-1}A_0^2\langle \textbf{B}, \cos^2\theta\sin^2\theta\rangle \notag\\
&\quad +8R\sigma_1^{-1}A_0^2\left(\langle \Lambda_0\textbf{B}, 2\sin^2\theta\cos^2\theta\rangle
+\langle M_0\textbf{B}, \cos2\theta\sin\theta\cos\theta\rangle\right) \notag\\
&\quad -16R\sigma_1^{-1}A_0^2\langle \textbf{B}, \cos^2\theta\sin^2\theta\rangle. 
\end{align}

To compute this, we need the following scalar products 
\begin{align*}
\langle  \textbf{B}, \cos^2\theta\rangle
&= \frac14 \langle  \textbf{B}, e^{2i\theta} + e^{-2i\theta} + 2 \rangle
= \pi (  B_0 +   B_2) , \\
\langle \Lambda_0 \textbf{B}, \cos^2\theta\rangle
&= \frac14 \left \langle \textbf{B}, \Lambda_0\left ( e^{2i\theta} + e^{-2i\theta} + 2 \right ) \right \rangle
= 2\pi   B_2 , \\
 \langle   \textbf{B}, \cos^2\theta\sin^2\theta\rangle
&=\frac18 \langle \textbf{B}, 1-\cos4\theta\rangle=\frac\pi4(B_0-B_4), \\
 \langle   \textbf{B}, \cos3\theta\cos\theta\rangle
&=\frac12 \langle \textbf{B}, \cos2\theta+\cos4\theta\rangle= \pi(B_2+B_4), \\
\langle \Lambda_0 \textbf{B}, \cos^2\theta\sin^2\theta\rangle
&=\frac1{16} \left \langle\textbf{B}, \Lambda_0\left (2-e^{4i\theta}-e^{-4i\theta}\right )\right \rangle =-\frac1{4} \langle\textbf{B}, e^{4i\theta}+e^{-4i\theta} \rangle  
=-\pi B_4,\\
\langle \Lambda_0 \textbf{B}, \cos3\theta\cos\theta\rangle
&=\frac12\langle \Lambda_0\textbf{B}, \cos2\theta+
	\cos4\theta\rangle =\langle \textbf{B}, \cos2\theta\rangle+2\langle \textbf{B}, \cos4\theta\rangle
=2\pi B_2+4\pi B_4, \\
\langle M_0 \textbf{B}, \sin3\theta\cos\theta\rangle
&=\frac12\left\langle i\sum_{n\in \mathbb{Z}}B_nn e^{in\theta},
\frac{e^{4i\theta}-e^{-4i\theta}}{2i}+\frac{e^{2i\theta}-e^{-2i\theta}}{2i}\right\rangle \\
&=\frac\pi4(-16B_4 -8B_2)=-4\pi B_4-2\pi B_2,\\
\langle M_0 \textbf{B}, \cos^3\theta\sin\theta\rangle
&=\frac14\left\langle i\sum_{n\in\mathbb{Z}} B_nn e^{in\theta},
\frac{e^{2i\theta}-e^{-2i\theta}}{2i}\right\rangle
+\frac18\left\langle i\sum_{n\in\mathbb{Z}} B_nn e^{in\theta},
\frac{e^{4i\theta}-e^{-4i\theta}}{2i}\right\rangle \\
&=\frac {2\pi}8\left ((-2)B_{-2} -2B_2 \right )
+\frac{2\pi}{16}\left ((-4)B_{-4} -4B_4\right ) \\
&=-\pi B_2-\pi B_4, \\
\langle M_0\textbf{B}, \cos2\theta\sin\theta\cos\theta\rangle
&=\frac14\left\langle i\sum_{n\in\mathbb{Z}} B_nn e^{in\theta},
\frac{e^{4i\theta}-e^{-4i\theta}}{2i}\right\rangle= \frac{2\pi}{8}((-4)B_{-4} -4B_4)=-2\pi B_4,\\
\langle \mathcal{T}' \textbf{B}, \cos\theta\rangle
&=\left\langle 4(I_0-I_2-I_1)B_4 \cos\theta
+(I_0-I_2+I_1+2I_3)B_2\cos\theta
\right.\\
&\qquad\left.+(I_0-I_2-I_1)B_2\cos\theta,\cos\theta\right\rangle \\
&=4(I_0-I_2-I_1)\pi B_4+2(I_0-I_2+I_3)\pi B_2.
\end{align*}
Here we have used that $ \langle \cos n\theta, \cos m\theta\rangle=\pi\delta_{mn}. $ Note also that
\begin{align*}
& \left \langle \partial_xP_0 \textbf{B}\big|_{\rho=1}\cos2\theta-
\partial_yP_0  \textbf{B}\big|_{\rho=1}\sin2\theta, \cos\theta \right \rangle\\
&\quad = \left \langle(\cos\theta\partial_\rho(P_0\textbf{B})-\sin\theta\partial_\theta(P_0\textbf{B}))\big|_{\rho=1}\cos2\theta  -(\sin\theta\partial_\rho P_0\textbf{B}+\cos\theta\partial_\theta P_0\textbf{B})\big|_{\rho=1}\sin2\theta, \cos\theta \right \rangle \\
&\quad=\left \langle \Lambda_0\textbf{B}\cos3\theta-M_0\textbf{B}\sin3\theta, \cos\theta \right \rangle\\
&\quad =4\pi B_2+8\pi B_4.
\end{align*}
Also, it follows from the definition of $ \mathcal{T}_1, \mathcal{T}_2 $ and $ \mathcal{T}_3 $ that
\begin{align*}
\left \langle \partial_\rho(  \mathcal{T}_1  \textbf{B})\big|_{\rho=1}, \cos\theta \right \rangle
&=\left \langle 2t^*  \sum_{n\ge1}B_nn^2\cos n\theta
+C^*\sum_{n\ge1}B_nn(n+3)\cos(n+3)\theta \right . \\
&\quad \left.+C^*\sum_{n\ge3}B_nn(n-3)\cos(n-3)\theta+2C^*B_2 \cos\theta, \cos\theta \right \rangle\\
&=4\pi C^*B_4+2\pi C^*B_2,
\end{align*}
and
\begin{align*}
\langle \mathcal{T}_2 \textbf{B}, \cos^2\theta\rangle+\langle \mathcal{T}_3\textbf{B}, \cos\theta\sin\theta\rangle
&=2\left\langle\sum_{n\ge2}B_nn(n-1)\cos n\theta,
(C^*\cos3\theta+t^*)\cos\theta\right\rangle \\
&=C^*\left\langle\sum_{n\ge2}B_nn(n-1)\cos n\theta,
\cos2\theta+
\cos4\theta\right\rangle \\
&= 2\pi C^*B_2+12\pi C^*B_4.
\end{align*}

Taking these identities into formula \eqref{eq:tilde C_1 expansion},   the coefficient of the $O(\varepsilon^2)$ term of $\widetilde C_1$ is given by
\begin{align*}
	& -8 \sigma_2^{-1} A_0^2\cdot 8 \pi C^* B_4 -4 \sigma_2^{-1} A_0^2\left(4\left(I_0-I_2-I_1\right) \pi B_4+2\left(I_0-I_2+I_3\right) \pi B_2\right)  \\
	& +24\sigma_2^{-1} A_0 A_1 \pi\left(B_0+B_2\right) -48 \sigma_2^{-1} C^* A_0^2 \pi\left(B_2+B_4\right)+24 \sigma_2^{-1} C^* A_0^2\left(4 \pi B_2+8 \pi B_4\right)   \\
	& -8 \sigma_2^{-1} A_0 A_1 \cdot 2 \pi B_2-8 \sigma_2^{-1} C^* A_0^2\left(2 \pi B_2+4 \pi B_4\right)  +6R \sigma_1 \sigma_2^{-2} A_0^2 \pi\left(B_0-B_4\right) \\
	& -16 R \sigma_1 \sigma_2^{-2} A_0^2\left(-2 \pi B_4-\pi B_2\right)   -6 R \sigma_1^{-1} A_0^2 \pi\left(B_0-B_4\right) +8 R \sigma_1^{-1} A_0^2\left(-4 \pi B_4\right)  \\
	&\quad =\left(24 \pi \sigma_2^{-1} A_0 A_1  -6 \pi R^3 \sigma_1^{-1} \sigma_2^{-1} A_0^2\right)B_0\\
	&\qquad+\left(8 \pi \sigma_2^{-1} A_0 A_1-8 \pi R \sigma_1^{-1} \sigma_2^{-1} A_0^2\left(R^2+3 h^2\right)+32 \pi \sigma_2^{-1} C^* A_0^2\right)B_2   \\
	&\qquad +\left(6 \pi R^3 \sigma_1^{-1} \sigma_2^{-1} A_0^2+48\pi \sigma_2^{-1} C^* A_0^2\right) B_4, 
\end{align*}
where we have used the definition of $ I_0,I_1,I_2$  and  $ I_3 $. Note that  $ t^* $ and $ h^2\sqrt{F_R} $ terms vanish since $ \langle \textbf{B}, \cos\theta\rangle=0 $ and $ \langle\Lambda_0  \textbf{B}, \cos\theta\rangle=0 $. This implies that
\begin{align*} 
\widetilde C_1
=&\left[ \left(24 \pi \sigma_2^{-1} A_0 A_1  -6 \pi R^3 \sigma_1^{-1} \sigma_2^{-1} A_0^2\right)B_0\right .\\
& +\left(8 \pi \sigma_2^{-1} A_0 A_1-8 \pi R \sigma_1^{-1} \sigma_2^{-1} A_0^2\left(R^2+3 h^2\right)+32 \pi \sigma_2^{-1} C^* A_0^2\right)B_2   \\
& \left .+\left(6 \pi R^3 \sigma_1^{-1} \sigma_2^{-1} A_0^2+48\pi \sigma_2^{-1} C^* A_0^2\right) B_4\right] \varepsilon^2+O\left (\varepsilon^3\right ).
\end{align*} 
Using the formulas for $c_{\varepsilon,B^*,1}$ and $c_{\varepsilon,B^*,2}$ derived in the proof of Proposition~\ref{prop:F_expansion}, this immediately yields
\begin{align*}
\widetilde C_{\varepsilon,B^*,\textbf{B}}=&\frac{\widetilde C_1}{c_{\varepsilon,B^*,2}}-c_{\varepsilon,B^*}\frac{\widetilde C_2}{c_{\varepsilon,B^*,2}}  \\
=& \frac{1}{2\pi R\sigma_1}\left[ \left (16\pi\sigma_2^{-1}A_0A_1-4\pi R^3\sigma_1^{-1}\sigma_2^{-1}A_0^2\right )B_0
	-2\pi R\sigma_1^{-1}\sigma_2^{-1}A_0^2(R^2+12h^2)B_2\right .\\
&\,\,\,\,\,\,\,\,\,\,\quad\quad	\left .+{12\pi R^3\sigma_1^{-1}\sigma_2^{-1}A_0^2B_4}\right]  
\varepsilon+O\left (\varepsilon^2\right ). 
\end{align*}
which results in
\begin{align*}
D_B\mathcal{F}(\varepsilon,B^*) \textbf{B}
=&\left[ 8A_0^2\sigma_2^{-1}\left (  \textbf{B}-\Lambda_0  \textbf{B}\right ) -\left (8R^{-1}\sigma_1^{-1}A_0A_1-2R^2\sigma_1^{-2}A_0^2\right )B_0\right . \notag\\
& 
\left . - \sigma_1^{-2}A_0^2(R^2+12h^2)B_2+6R^2\sigma_1^{-2}A_0^2B_4
\right]
\varepsilon+O(\varepsilon^2).
\end{align*}
We are now ready to analyze the differential of $\mathcal{G}$, which we have shown to be given by the formula
\begin{align*}
D_B\mathcal{G}(0,B^*)  \textbf{B}
= & 8A_0^2\sigma_2^{-1}\left (  \textbf{B}-\Lambda_0  \textbf{B}\right ) -\left (8R^{-1}\sigma_1^{-1}A_0A_1-2R^2\sigma_1^{-2}A_0^2\right )B_0 \\
& - \sigma_1^{-2}A_0^2(R^2+12h^2)B_2+6R^2\sigma_1^{-2}A_0^2B_4, 
\end{align*}
understood as a map $X^{s+1} \to X^s$.
We recall that, as $  \textbf{B}$ is an even function orthogonal to $\cos\theta$, the Fourier series $  \textbf{B} = \sum_{n\in\mathbb{Z}}   B_n e^{in\theta}$ can be equivalently written as
\[
  \textbf{B}(\theta) =   B_0 + 2\sum_{n=2}^\infty   B_n \cos n\theta .
\]
	Therefore, the action of the linear elliptic operator $D_B\mathcal{G}(0,B^*)$ is given by
\begin{align*}
D_B\mathcal{G}(0,B^*)  \textbf{B}
= & \left( 8A_0^2\sigma_2^{-1}  - 8R^{-1}\sigma_1^{-1}A_0A_1+2R^2\sigma_1^{-2}A_0^2\right )B_0 - \sigma_1^{-2}A_0^2(R^2+12h^2)B_2\\
& +6R^2\sigma_1^{-2}A_0^2B_4-16A_0^2\sigma_2^{-1}\sum_{n=2}^\infty (n-1)   B_n \cos n\theta .
\end{align*}	
 Note that $ 8A_0^2\sigma_2^{-1}  - 8R^{-1}\sigma_1^{-1}A_0A_1+2R^2\sigma_1^{-2}A_0^2\neq 0 $ for all $ c,R>0 $ because
\[
8A_0^2\sigma_2^{-1}  - 8R^{-1}\sigma_1^{-1}A_0A_1+2R^2\sigma_1^{-2}A_0^2=-\frac{A_0\sigma_2c}{2}<0 
\]
This implies that the kernel of the map $D_B\mathcal{G}(0,B^*): X^{s+1} \to X^s$ is trivial, and that its range is the whole space $X^s$, as claimed.

\end{proof}

\subsection{Proof of the main theorem} 

Now we are able to prove our main result (Theorem~\ref{thm:main}). In the next corollary we show that, by the implicit function theorem for Banach spaces, Theorem~\ref{thm:invert} yields the existence of solutions to the overdetermined boundary value problem~\eqref{Grad Shaf eqs}, \eqref{eq:dirichlet} and \eqref{eq:bernoulli-bc} for all small enough $\varepsilon$ and all $R>0$.
In turn, these define piecewise smooth stationary Euler flows of compact support via Lemma~\ref{lem:overdet}, thereby completing the proof of the main result of the paper.
Recall that the constant $F_R$ appears in the definition of the function $F$, cf.\ Equation~\eqref{eq:main-F}.

\begin{corollary}\label{cor:existence}
	Fix any $R>0$. Let $ \sigma_1=h\sqrt{h^2+R^2}, \sigma_2=h^2+R^2,  C^*:=R^3\sigma_1^{-1}/8 $ and $ t^*:=5h^2\sqrt{F_R}/(18A_0) $ with $ A_0 = \sigma_2^2c/4. $
	Then, for any small enough $\varepsilon$ there is a unique $B \in X^{s+1}$ in a $C^{s+1}$ neighborhood of $B^*=C^*\cos3\theta+t^*$ such that $\psi := \varepsilon^2 \phi_{\varepsilon,B}$ satisfies Equation~\eqref{Grad Shaf eqs} in $\varOmega_{R,\varepsilon} := \varOmega_{\varepsilon, B}$ and the overdetermined boundary conditions~\eqref{eq:dirichlet}--\eqref{eq:bernoulli-bc} with $F_R := -\kappa > 0$ and $c = \varepsilon^2 c_{\varepsilon,B}$.
\end{corollary}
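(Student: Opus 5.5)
The plan is to apply the implicit function theorem in Banach spaces to the map $\mathcal{G}:(-\varepsilon_0,\varepsilon_0)\times X'_{s+1}(B^*)\to X^s$ of Proposition~\ref{prop:G}, at the point $(0,B^*)$.

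\textbf{Step 1: the base point.} By Proposition~\ref{prop:choice of B_0} we have $\mathcal{F}(\varepsilon,B^*)=\kappa+O(\varepsilon^2)$. Since $\mathcal{G}$ is defined by continuity at $\varepsilon=0$, this gives $\mathcal{G}(0,B^*)=0$.

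\textbf{Step 2: regularity of $\mathcal{G}$.} We must show that $\mathcal{G}$ is $C^1$ jointly in $(\varepsilon,B)$ near $(0,B^*)$.
\begin{itemize}
\item Proposition~\ref{prop:Dirichlet} gives $\phi_{\varepsilon,B}$ through the implicit function theorem applied to the $C^1$ map $\mathfrak H$, pulled back to the disk. Hence $(\varepsilon,B)\mapsto \bar\phi_{\varepsilon,B}\in C^{s+1}(\mathbb D)$ is $C^1$.
\item The boundary traces of $\nabla\phi_{\varepsilon,B}$ at $\rho=1+\varepsilon B$ are compositions with the smooth map $\Xi_{\varepsilon,B}$. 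Their dependence on $B$ in $C^{s+1}$ is $C^1$ into $C^s(\mathbb T)$.
\item The constant $c_{\varepsilon,B}$ is a ratio of two integrals, and $c_{\varepsilon,B,2}=2\pi R\sigma_1\varepsilon+O(\varepsilon^2)$. After dividing numerator and denominator by $\varepsilon$, the ratio stays $C^1$ up to $\varepsilon=0$.
\item $\mathcal{F}-\kappa$ vanishes at $\varepsilon=0$ for every $B$ (Proposition~\ref{prop:F_expansion}). So $\mathcal{G}=\varepsilon^{-1}(\mathcal F-\kappa)=\int_0^1\partial_\varepsilon\mathcal F(\tau\varepsilon,B)\,d\tau$ is $C^1$ as well. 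This uses $s>2$ non-integer, which gives enough Hölder room to lose one derivative in $\varepsilon$.
\end{itemize}
This regularity bookkeeping, and in particular the $C^1$ dependence at $\varepsilon=0$ after division by $\varepsilon$, is the main technical obstacle. I would handle it by working entirely on the fixed disk via $\Xi_{\varepsilon,B}$, so that every object is a composition of $C^1$ Banach-space maps.

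\textbf{Step 3: invertibility of the linearization.} By Theorem~\ref{thm:invert}, $D_B\mathcal{G}(0,B^*)$ is explicit on Fourier modes:
\begin{itemize}
\item on $\cos n\theta$ with $n\ge2$ it acts by the diagonal multiplier $-16A_0^2\sigma_2^{-1}(n-1)$, plus finite-rank couplings into the constant mode coming from $B_2$ and $B_4$;
\item on the constant mode it has nonzero coefficient $-A_0\sigma_2c/2$.
\end{itemize}
The operator is therefore triangular: the diagonal part is an order-one elliptic isomorphism $X^{s+1}\to X^s$, and the finite-rank part only feeds the constant mode. Bijectivity follows by solving for the modes $n\ge2$ first and then for $B_0$. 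The bounded inverse comes from the open mapping theorem, or directly from the Fourier formula together with Hölder boundedness of the operator $\Lambda_0$.

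\textbf{Step 4: conclusion.} The implicit function theorem gives, for $|\varepsilon|$ small, a unique $B=B_\varepsilon\in X^{s+1}$ near $B^*$ with $\mathcal{G}(\varepsilon,B_\varepsilon)=0$, that is, $\mathcal{F}(\varepsilon,B_\varepsilon)=\kappa$ identically in $\theta$. With $F_R=-\kappa$ this is exactly the Neumann condition $\mathcal F+F_R=0$. Undoing the scaling with $c=\varepsilon^2c_{\varepsilon,B}$ recovers \eqref{eq:bernoulli-bc}.

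The orthogonality to $\cos\theta$ that is built into $X^s$ causes no loss. The single missing scalar equation (the $\cos\theta$ component) is absorbed by the free constant $c_{\varepsilon,B}$, which was chosen precisely to kill that component. Evenness is preserved throughout by the uniqueness in Proposition~\ref{prop:Dirichlet}.

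Finally, $\psi=\varepsilon^2\phi_{\varepsilon,B}$ satisfies \eqref{Grad Shaf eqs} and the Dirichlet condition by construction. The estimate $B_\varepsilon=B^*+O(\varepsilon)$ follows from the $C^1$ dependence of $B_\varepsilon$ on $\varepsilon$.
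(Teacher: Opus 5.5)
This is the paper's argument: the implicit function theorem for $\mathcal{G}$ at $(0,B^*)$, using $\mathcal{F}(\varepsilon,B^*)=\kappa+O(\varepsilon^2)$ and Theorem~\ref{thm:invert}. Your triangular Fourier argument only makes explicit the surjectivity that the paper asserts, and your Step~2 treats regularity that the paper never discusses, with one caveat: writing $\mathcal{G}=\int_0^1\partial_\varepsilon\mathcal{F}(\tau\varepsilon,B)\,d\tau$ gives a $C^1$ map only if $\partial_B\partial_\varepsilon\mathcal{F}$ exists and is continuous, which is more than the $C^1$ dependence provided by Proposition~\ref{prop:Dirichlet}. The one check in the paper's proof that you omit is $\kappa=-\sigma_2^3c^2/16<0$, which gives $F_R>0$ and, together with $\psi=O(\varepsilon^2)$ and $\widehat F(\psi)=O(\psi^2)$, ensures that $F(\psi)=(\varepsilon^2F_R+\widehat F(\psi))^{1/2}$ is well defined.
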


\begin{proof}
	Since $\mathcal{G}(0,B^*)=0$, in view of Theorem~\ref{thm:invert}, the implicit function theorem guarantees that if $|\varepsilon|$ is small enough, there is a unique function $B$ in a small neighborhood of $B^*$ in $X^{s+1}_1$ such that
	\[
	\mathcal{G}(\varepsilon,B) = 0 .
	\]
	This is equivalent to saying that
	\[
(h^2+x_2^2)(\partial_{x_1}\psi)^2+(h^2+x_1^2)(\partial_{x_2}\psi)^2
-2x_1x_2\partial_{x_1}\psi\,\partial_{x_2}\psi
-c(x_1^2+x_2^2+h^2)- \varepsilon^2 \kappa = 0.
	\]
	on $\partial\varOmega_{\varepsilon, B}$, with $\psi := \varepsilon^2\phi_{\varepsilon,B}$.
	The assumption that $F^2(0) = \varepsilon^2 F_R = -\varepsilon^2\kappa$ then ensures that we have a solution to the overdetermined boundary problem~\eqref{Grad Shaf eqs}, \eqref{eq:dirichlet} and \eqref{eq:bernoulli-bc}, as claimed.
	Observe that for $ R>0 $
	\[
	\kappa = -\frac{\sigma_2^3c^2}{16} < 0 ,
	\]
	and  $F_R=-\kappa > 0$.
	Accordingly, the function $F(\psi)$ is well defined as
	\[
	F (\psi) = \left ( \varepsilon^2 F_R   + O(\psi^2) \right )^{1/2} ,
	\]
	because $\psi = O(\varepsilon^2)$ and $\psi < 0$ in $\varOmega_{\varepsilon, B}$ (cf.\ Proposition~\ref{prop:Dirichlet}).
\end{proof}

To conclude, we finish the proof of Theorem \ref{thm:main}.

	\section{Different choices for the functions $F$ and $H$}\label{sec:different}

As we mentioned in the Introduction, for the sake of concreteness we have chosen the functions $H$ and $F$ as described in Theorem~\ref{thm:main}.
However, the method introduced in this paper is flexible enough to construct stationary Euler flow supported in a neighborhood of a helix with other choices for the functions $H$ and $F$.
To illustrate this additional flexibility, in this section we show how a straightforward modification of the previous computations allows us to prove the following:

\begin{theorem}\label{thm:different}
	Take any non-integer $s>2$ and any functions $\tilde F, H \in C^s((-1,0])$ with
	\[
	\tilde F(0) = \tilde F'(0) = 0 , \qquad H'(0) > 0 .
	\]
	Then the following statements hold:
	\begin{enumerate} 
		\item For each small enough $\varepsilon>0$ and any $R>0$, there exists a nontrivial, piecewise $C^s$, helically symmetric stationary Euler flow of compact support $u$ of the form described in Lemma~\ref{lem:overdet} for a suitable $C^{s+1}$ planar domain $\varOmega_{R,\varepsilon}$.
		\item The boundary of $\varOmega_{R,\varepsilon}$ is a small deformation of an ellipse, given by an equation of the form $\frac{(x_1-R)^2}{\sigma_1^2 }+\frac{x_2^2}{\sigma_2^2}-\varepsilon^2=O(\varepsilon^3).$
		\item The functions that define the solution are
		\[
		F(\psi) := \varepsilon \sqrt{F_R} + \tilde F(\psi)
		\]
		and $H(\psi)$, where $F_R$ is the positive constant
		\begin{equation}\label{eq:FR2}
		F_R := \frac1{16} \left (h^2+R^2\right )^3H'(0)^2.
		\end{equation}
		\item The function $\psi$ is of class $C^{s+1}$ in $\varOmega_{R,\varepsilon}$ up to the boundary, and has the form
		\[
	\psi = \frac14 \left [ \left( h^2+R^2\right)^2 H'(0)  \right ]
	\left [ \frac{(x_1-R)^2}{\sigma_1^2 }+\frac{x_2^2}{\sigma_2^2}-\varepsilon^2 \right ] + O(\varepsilon^3) .
	\]
		Moreover, $F\circ \psi > 0$ and $H\circ \psi$ are of class $C^s$ in $\varOmega_{R,\varepsilon}$.
		In particular, the vorticity is of class $C^{s-1}$ up the boundary.
	\end{enumerate}
\end{theorem}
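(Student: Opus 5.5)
The plan is to reduce Theorem~\ref{thm:different} to the construction already carried out for Theorem~\ref{thm:main}. The only change is the choice of $F$, and the goal is to show that this change does not affect any quantity that enters the analysis up to the orders actually used, namely $O(\varepsilon)$ in $\mathcal{F}$ and $O(\varepsilon^2)$ in the $\phi$-expansion and in the variations.

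First I compare the two choices of $F$ in the rescaled equation \eqref{eq:phi-0}, with $\psi=\varepsilon^2\phi$ and $\phi$ bounded in $C^{s+1}$. With $F(\psi)=\varepsilon\sqrt{F_R}+\tilde F(\psi)$ and $\tilde F(0)=\tilde F'(0)=0$, we have $\tilde F(\varepsilon^2\phi)=O(\varepsilon^4)$ and $\tilde F'(\varepsilon^2\phi)=O(\varepsilon^2)$ in $C^{s-1}$. Hence:
\begin{itemize}
\item $F(0)=\varepsilon\sqrt{F_R}$ and $F(0)^2=\varepsilon^2F_R$, exactly as in \eqref{eq:main-F};
\item $-2h^2F(\varepsilon^2\phi)=-2h^2\varepsilon\sqrt{F_R}+O(\varepsilon^4)$;
\item $\tfrac12(F^2)'(\varepsilon^2\phi)=F\tilde F'=O(\varepsilon^3)$.
\end{itemize}
So \eqref{eq:phi} holds with the same $O(1)$ and $O(\varepsilon)$ terms, including $-2h^2\sqrt{F_R}$. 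The only change is in $\mathcal{R}(\varepsilon,\phi)$: the term $-\tfrac12(\cdots)\widehat F'(\varepsilon^2\phi)$ is replaced by $-(\cdots)F\tilde F'(\varepsilon^2\phi)=O(\varepsilon^3)$, and $-2h^2(F(\varepsilon^2\phi)-F(0))$ becomes $O(\varepsilon^4)$. The bound $\|\mathcal{R}\|_{C^{s-1}}\le C\varepsilon^2$ still holds, and $\mathcal{R}$ is still $C^1$ in $(\varepsilon,\phi)$ because $\tilde F,H\in C^s$ with $s>2$. Therefore Proposition~\ref{prop:Dirichlet} and Proposition~\ref{prop:first-order-dirichlet-expansion} hold verbatim, including negativity and evenness of $\phi_{\varepsilon,B}$.

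Next I check the places where $\varepsilon^{-2}\mathcal{R}$ enters at leading order. In Step~I of Proposition~\ref{prop:choice of B_0}, only the fact that the principal part of $\varepsilon^{-2}\mathcal{R}(\varepsilon,\phi_{\varepsilon,B^*})$ is a combination of $x^2,y^2,1$ is used, with $M_1,M_2,M_3$ irrelevant. The new $F$-contributions are $O(\varepsilon)$ after division by $\varepsilon^2$, so they only drop terms from $M_3$ and leave $M_4$ (which comes from $I_0,I_3$ and $\phi_1$) unchanged. Consequently $C_3$ and $B^*=C^*\cos3\theta+t^*$ are unchanged, and $\mathcal{F}(\varepsilon,B^*)=\kappa+O(\varepsilon^2)$ with $\kappa=-F_R$, which gives \eqref{eq:FR2}. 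In the linearized equation \eqref{eq:Phi}, the new coefficients $2\varepsilon^2h^2F'$ and $\varepsilon^2(F^2/2)''$ are $O(\varepsilon^3)$ and $O(\varepsilon^2)$ respectively. So Proposition~\ref{prop:variation}, Lemmas~\ref{lemma:expan of px-PB}--\ref{lemma: expan of px-Phi}, and the formula for $D_B\mathcal{G}(0,B^*)$ in Theorem~\ref{thm:invert} are identical, and $D_B\mathcal{G}(0,B^*):X^{s+1}\to X^s$ is again an isomorphism.

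Then Corollary~\ref{cor:existence} applies through the implicit function theorem. For small $\varepsilon$ it gives $B$ near $B^*$ solving the overdetermined problem with $F(0)^2=\varepsilon^2F_R$, and Lemma~\ref{lem:overdet} yields the flow. The ellipse description of $\partial\varOmega_{R,\varepsilon}$ and the form of $\psi$ follow from $\phi=A_0(\rho^2-1)+O(\varepsilon)$ and $\rho^2=\frac{(x_1-R)^2}{\varepsilon^2\sigma_1^2}+\frac{x_2^2}{\varepsilon^2\sigma_2^2}$. Positivity $F\circ\psi=\varepsilon\sqrt{F_R}+O(\varepsilon^4)>0$ is immediate, which is simpler than in Theorem~\ref{thm:main} since no square root is needed. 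The $C^s$ regularity of $F\circ\psi$ and $H\circ\psi$ follows from $\psi\in C^{s+1}$, and the vorticity formula then gives $C^{s-1}$.

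The main obstacle is bookkeeping: making sure no term of $\mathcal{R}$ that changed contributes at order $\varepsilon^2$ to $\langle\mathcal{F}_0,\cos\theta\rangle$ through $\phi_2$. I would handle this by noting that any such change alters $\phi_2$ only by $O(\varepsilon)$, hence alters $C_3$ and $D_B\mathcal{G}(0,B^*)$ by nothing at the relevant order.
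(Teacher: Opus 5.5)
Your proposal is correct and follows the same route as the paper. With $F=\varepsilon\sqrt{F_R}+\tilde F$, the rescaled equation keeps exactly the same $O(1)$ and $O(\varepsilon)$ terms (including $-2h^2\sqrt{F_R}$, and $F(0)^2=\varepsilon^2F_R$), every change is absorbed into the $O(\varepsilon^2)$ remainder, and one reruns Sections~3--7 and the implicit function argument; your explicit checks of Step~I (for $C_3$) and of the linearized equation for $\Phi$ are more careful bookkeeping than the paper's one-line ``repeating all the arguments''. One harmless imprecision: the term $-\tfrac12\sigma_2\widehat F''(0)\phi_0$ of the old remainder that disappears is proportional to $x^2+y^2-1$, so it shifts $M_1$ and $M_2$ as well as $M_3$, but these only produce the modes $0,\pm2$ of $\rho\partial_\rho\phi_2$ on the unit circle and so never reach $\langle \mathcal F_0(\varepsilon,B^*),\cos\theta\rangle$.
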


\begin{proof}
	Indeed, using the same notation as in Section~\ref{sec:Dirichlet}, and noticing that
	\[
 F (\psi)=\varepsilon \sqrt{F_R}+O(\psi^2),\quad 	(F^2)'(\psi) = \varepsilon\,O(\psi),\quad H'(\psi)=H'(0)+O(\psi),
	\]
	Equation~\eqref{eq:phi} still takes the form
\begin{equation*} 
\begin{split}
\Delta\phi={}&\sigma_2^2c+\varepsilon\Bigl(-2R\sigma_1^{-1}xh^2\partial_{xx}\phi+2  R\sigma_1^{-1}\sigma_2y\partial_{xy}\phi-4R\sigma_1\sigma_2^{-1}x\partial_{yy}\phi\\
&
+(R^2+3h^2)R\sigma_1^{-1}\partial_x\phi 
+4R \sigma_1\sigma_2xc-2h^2 \sqrt{F_R}\Bigr)
+O(\varepsilon^2),
\end{split}
\end{equation*}
where we have defined the constant $c = H'(0)$. Repeating all the arguments in Sections~\ref{sec:Dirichlet}--\ref{sec: invertibility}, we obtain the same equations and results.
	In particular, $\mathcal{F}(\varepsilon,B^*) = \kappa + O(\varepsilon^2)$, where the constant $\kappa$  is given by
	\[
	\kappa =-\frac1{16} \left (h^2+R^2\right )^3H'(0)^2 < 0 ,
	\]
and the invertibility condition in Theorem~\ref{thm:invert} holds for $R>0$.
	The Neumann boundary condition is then satisfied taking $F(0) = \varepsilon \sqrt{F_R}$, with $F_R$ as in Equation~\eqref{eq:FR2}.
	Notice that $F(\psi) = \varepsilon \sqrt{F_R} + O(\psi^2) = \varepsilon \sqrt{F_R} + O(\varepsilon^4) > 0$ in $\varOmega_{R,\varepsilon}$.
\end{proof}

\begin{remark}
Actually, it is possible to relax the condition of $ \tilde F $ in Theorem \ref{thm:different} to 
\begin{equation*}
\tilde F(0)=0, \quad \tilde F'(0)\leq 0.
\end{equation*}
We omit the proof here.
\end{remark}

\subsection*{Acknowledgments}

\par
This work is supported by the grants CEX2023-001347-S and PID2022-136795NB-I00 (D.P.-S.) funded by MCIN/AEI /10.13039/501100011033. J.W. was partially supported by NNSF of China (grant  No.  12471190).  

\subsection*{Conflict of interest statement} On behalf of all authors, the corresponding author states that there is no conflict of interest.

\subsection*{Data availability statement} All data generated or analysed during this study are included in this published article  and its supplementary information files.

     \phantom{s}
     \thispagestyle{empty}

\end{document}